\def\hyph{-\penalty0\hskip0pt\relax} 
\providecommand \@dotsep{5} \def\listtodoname{List of Todos} \def\listoftodos{\@starttoc{tdo}\listtodoname} \makeatother 
\patchcmd{\@startsection}{\@afterindenttrue}{\@afterindentfalse}{}{}             
\patchcmd{\part}{\bfseries}{\bfseries\LARGE}{}{}
\patchcmd{\section}{\scshape}{\bfseries}{}{}\renewcommand{\@secnumfont}{\bfseries} 
\patchcmd{\@settitle}{\uppercasenonmath\@title}{\large}{}{}
\patchcmd{\@setauthors}{\MakeUppercase}{}{}{}
\theoremstyle{plain}
\newtheorem{thm}{Theorem}[section] 
\newaliascnt{lemma}{thm}\newtheorem{lemma}[lemma]{Lemma}\aliascntresetthe{lemma}
\newaliascnt{cor}{thm}\newtheorem{cor}[cor]{Corollary}\aliascntresetthe{cor}
\newaliascnt{prop}{thm}\newtheorem{prop}[prop]{Proposition}\aliascntresetthe{prop}
\newtheorem{thmA}{Theorem} 
\newtheorem*{thm*}{Theorem}
\newtheorem*{lem*}{Lemma}
\newtheorem*{cor*}{Corollary}
\theoremstyle{definition}
\newaliascnt{df}{thm}\newtheorem{df}[df]{Definition}\aliascntresetthe{df}
\newaliascnt{rem}{thm}\newtheorem{rem}[rem]{Remark}\aliascntresetthe{rem}
\newaliascnt{ex}{thm}\newtheorem{ex}[ex]{Example}\aliascntresetthe{ex}
\newtheorem*{df*}{Definition}
\newtheorem*{ex*}{Example}
\newtheorem*{rem*}{Remark}
\theoremstyle{remark}
\DeclareRobustCommand{\gobblefour}[5]{}    
\DeclareFontFamily{OT1}{pzc}{}                                
\DeclareFontShape{OT1}{pzc}{m}{it}{<-> s * [1.10] pzcmi7t}{}
\DeclareMathAlphabet{\mathpzc}{OT1}{pzc}{m}{it}
\DeclareSymbolFont{sfoperators}{OT1}{bch}{m}{n} \DeclareSymbolFontAlphabet{\mathsf}{sfoperators} \makeatletter\def\operator@font{\mathgroup\symsfoperators}\makeatother 
\DeclareSymbolFont{cmletters}{OML}{cmm}{m}{it}              
\DeclareSymbolFont{cmsymbols}{OMS}{cmsy}{m}{n}
\DeclareSymbolFont{cmlargesymbols}{OMX}{cmex}{m}{n}
\DeclareMathSymbol{\myjmath}{\mathord}{cmletters}{"7C}     \let\jmath\myjmath 
\DeclareMathSymbol{\myamalg}{\mathbin}{cmsymbols}{"71}     \let\amalg\myamalg
\DeclareMathSymbol{\mycoprod}{\mathop}{cmlargesymbols}{"60}\let\coprod\mycoprod
\DeclareMathSymbol{\myalpha}{\mathord}{cmletters}{"0B}     \let\alpha\myalpha 
\DeclareMathSymbol{\mybeta}{\mathord}{cmletters}{"0C}      \let\beta\mybeta
\DeclareMathSymbol{\mygamma}{\mathord}{cmletters}{"0D}     \let\gamma\mygamma
\DeclareMathSymbol{\mydelta}{\mathord}{cmletters}{"0E}     \let\delta\mydelta
\DeclareMathSymbol{\myepsilon}{\mathord}{cmletters}{"0F}   \let\epsilon\myepsilon
\DeclareMathSymbol{\myzeta}{\mathord}{cmletters}{"10}      \let\zeta\myzeta
\DeclareMathSymbol{\myeta}{\mathord}{cmletters}{"11}       \let\eta\myeta
\DeclareMathSymbol{\mytheta}{\mathord}{cmletters}{"12}     \let\theta\mytheta
\DeclareMathSymbol{\myiota}{\mathord}{cmletters}{"13}      \let\iota\myiota
\DeclareMathSymbol{\mykappa}{\mathord}{cmletters}{"14}     \let\kappa\mykappa
\DeclareMathSymbol{\mylambda}{\mathord}{cmletters}{"15}    \let\lambda\mylambda
\DeclareMathSymbol{\mymu}{\mathord}{cmletters}{"16}        \let\mu\mymu
\DeclareMathSymbol{\mynu}{\mathord}{cmletters}{"17}        \let\nu\mynu
\DeclareMathSymbol{\myxi}{\mathord}{cmletters}{"18}        \let\xi\myxi
\DeclareMathSymbol{\mypi}{\mathord}{cmletters}{"19}        \let\pi\mypi
\DeclareMathSymbol{\myrho}{\mathord}{cmletters}{"1A}       \let\rho\myrho
\DeclareMathSymbol{\mysigma}{\mathord}{cmletters}{"1B}     \let\sigma\mysigma
\DeclareMathSymbol{\mytau}{\mathord}{cmletters}{"1C}       \let\tau\mytau
\DeclareMathSymbol{\myupsilon}{\mathord}{cmletters}{"1D}   \let\upsilon\myupsilon
\DeclareMathSymbol{\myphi}{\mathord}{cmletters}{"1E}       \let\phi\myphi
\DeclareMathSymbol{\mychi}{\mathord}{cmletters}{"1F}       \let\chi\mychi
\DeclareMathSymbol{\mypsi}{\mathord}{cmletters}{"20}       \let\psi\mypsi
\DeclareMathSymbol{\myomega}{\mathord}{cmletters}{"21}     \let\omega\myomega
\DeclareMathSymbol{\myvarepsilon}{\mathord}{cmletters}{"22}\let\varepsilon\myvarepsilon
\DeclareMathSymbol{\myvartheta}{\mathord}{cmletters}{"23}  \let\vartheta\myvartheta
\DeclareMathSymbol{\myvarpi}{\mathord}{cmletters}{"24}     \let\varpi\myvarpi
\DeclareMathSymbol{\myvarrho}{\mathord}{cmletters}{"25}    \let\varrho\myvarrho
\DeclareMathSymbol{\myvarsigma}{\mathord}{cmletters}{"26}  \let\varsigma\myvarsigma
\DeclareMathSymbol{\myvarphi}{\mathord}{cmletters}{"27}    \let\varphi\myvarphi
\DeclareMathOperator{\Hom}{Hom}
\DeclareMathOperator{\Sym}{Sym}
\DeclareMathOperator{\Hex}{Hex}
\DeclareMathOperator{\Stab}{Stab}
\DeclareMathOperator{\sign}{sign}
\DeclareMathOperator{\Ob}{Ob}
\DeclareMathOperator{\PartFields}{{PartFields}}
\DeclareMathOperator{\MPF}{{MockPartFields}}
\DeclareMathOperator{\Pastures}{{Pastures}}
\DeclareMathOperator{\FPastures}{{FinPastures}}
\DeclareMathOperator{\Foundations}{{Foundations}}
\DeclareMathOperator{\Lifts}{{Lifts}}
\DeclareMathOperator{\GRS}{{GRS}}
\DeclareMathOperator{\PvZ}{{PvZ}}
\DeclareMathOperator{\WLUM}{{WLUM}}
\newcommand{\wlum}{{\mathcal{W}}}
\newcommand{\binary}{{\mathcal{B}}}
\newcommand{\ternary}{{\mathcal{T}}}
\newcommand{\grs}{{\mathcal{G}}}
\newcommand{\pvz}{{\mathcal{P}}}
\newcommand\D{{\mathbb D}}
\newcommand\F{{\mathbb F}}
\newcommand\G{{\mathbb G}}
\renewcommand\H{{\mathbb H}}
\newcommand\K{{\mathbb K}}
\newcommand\R{{\mathbb R}}
\renewcommand\S{{\mathbb S}}
\newcommand\U{{\mathbb U}}
\newcommand\W{{\mathbb W}}
\newcommand\Z{{\mathbb Z}}
\newcommand\bI{{\mathbf I}}
\newcommand\bJ{{\mathbf J}}
\newcommand\cB{{\mathcal B}}
\newcommand\cC{{\mathcal C}}
\newcommand\cD{{\mathcal D}}
\newcommand\cG{{\mathcal G}}
\newcommand\cL{{\mathcal L}}
\newcommand\cR{{\mathcal R}}
\newcommand\cT{{\mathcal T}}
\newcommand\cW{{\mathcal W}}
\newcommand\cX{{\mathcal X}}
\newcommand\Lift{{\cL}}
\newcommand\Funpm{{\F_1^\pm}}
\newcommand\id{\textup{id}}
\newcommand\im{\textup{im}}
\renewcommand\geq{\geqslant}
\renewcommand\leq{\leqslant}
\renewcommand\check{\checkmark}
\newcommand{\gen}[1]{\langle #1 \rangle}
\newcommand{\norm}[1]{|#1|}
\newcommand{\past}[2]{#1\!\sslash\!#2}
\newcommand{\pastgen}[3]{#1\langle #2 \rangle \!\sslash\!\{ #3 \}}
\newcommand{\fund}{{\,\beginpgfgraphicnamed{tikz/fund}\begin{tikzpicture}[x=1pt,y=1pt]\draw[line width=0.5] (0,0) circle (2);\end{tikzpicture}\endpgfgraphicnamed}}
\newcommand{\pair}{{\,\beginpgfgraphicnamed{tikz/pair}\begin{tikzpicture}[x=1pt,y=1pt]\draw[line width=0.5] (0,0) circle (2); \draw[line width=0.5] (2.5,0) circle (2); \draw[very thick, white] (2,0) arc (0:70:2); \draw[very thick, white] (0.5,0) arc (180:250:2); \draw[line width=0.5] (2,0) arc (0:70:2); \draw[line width=0.5] (0.5,0) arc (180:250:2); \end{tikzpicture}\endpgfgraphicnamed}}
\newcommand{\hexagon}{{\beginpgfgraphicnamed{tikz/hexagon}\begin{tikzpicture}\node[draw,scale=0.45,regular polygon, regular polygon sides=6](){};  \end{tikzpicture}\endpgfgraphicnamed}}
\newcommand{\hex}[6]{\mathchoice{\left\langle\begin{matrix}&{#1}&{#2}\\[-6.0pt]{#3}&&&{#4}\\[-6.0pt]&{#5}&{#6}\end{matrix}\right\rangle}{\left\langle\begin{smallmatrix}&{#1}&{#2}\\[-4.0pt]{#3}&&&{#4}\\[-4.0pt]&{#5}&{#6}\end{smallmatrix}\right\rangle}{}{}}
\newcommand{\hyperplus}{\mathrel{\,\raisebox{-1.1pt}{\larger[-0]{$\boxplus$}}\,}}
\renewcommand{\implies}{\ensuremath{\Rightarrow}}
\renewcommand\emptyset\varnothing
\title{Lift theorems for representations of matroids over pastures}
\author{Matthew Baker}
\address{\rm Matthew Baker, School of Mathematics, Georgia Institute of Technology, Atlanta, USA}
\email{mbaker@math.gatech.edu}
\author{Oliver Lorscheid}
\address{\rm Oliver Lorscheid, University of Groningen, the Netherlands, and IMPA, Rio de Janeiro, Brazil}
\email{oliver@impa.br}
\begin{document}

\begin{abstract}
Pastures are a class of field-like algebraic objects which include both partial fields hyperfields and have nice categorical properties. 
We prove several lift theorems for representations of matroids over pastures, including a generalization of Pendavingh and van Zwam's Lift Theorem for partial fields. By embedding the earlier theory into a more general framework, we are able to establish new results even in the case of lifts of partial fields, for example the conjecture of Pendavingh--van Zwam that their lift construction is idempotent. We give numerous applications to matroid representations, e.g.
we show that, up to projective equivalence, every pair consisting of a hexagonal representation and an orientation lifts uniquely to a near-regular representation.
The proofs are different from the arguments used by Pendavingh and van Zwam, relying instead on a result of Gelfand--Rybnikov--Stone inspired by Tutte's homotopy theorem.
\end{abstract}

\maketitle

\begin{small} \tableofcontents \end{small}


\section*{Introduction}
\label{introduction}

\subsection*{Overview}
Our goal in this paper is to introduce a new lifting technique into matroid representation theory, and to explore some of its combinatorial implications. Although the technique applies to much more general algebraic structures (which we call {\em pastures}) than partial fields, in this introductory subsection we will stick to the more ``classical'' setting of partial fields, since even in that case some of our results seem to be new.

To fix some notation and terminology, given a matroid $M$ and a partial field $P$, we let $\cX_M(P)$ denote the corresponding {\em rescaling class space}, which is the set of projective equivalence classes\footnote{In the more general context of matroids over pastures, we refer to {\em rescaling equivalence classes} rather than projective equivalence classes; see \autoref{sec:rescaling} for details.} of representations of $M$ over $P$. We use the following notation for some familiar partial fields in matroid theory: 
\begin{itemize}
\item[] $\F_q$: the finite field of order $q$
\item[] $\F_1^\pm$: the regular partial field
\item[] $\D$: the dyadic partial field
\item[] $\H$: the hexagonal (or sixth-root-of-unity) partial field
\item[] $\U$: the near-regular partial field
\item[] $\G$: the golden ratio partial field
\end{itemize}
We also denote by $\K$ the Krasner hyperfield and by $\S$ the sign hyperfield.

Here is a sampling of some concrete results about matroid representations that can be obtained with our new method:

\begin{thmA}\label{thmA}
 Let $M$ be a matroid.
 \begin{enumerate}
 \item There is a canonical bijection between $ \cX_M(\G) $ and $ \cX_M(\F_{4}) \times \cX_M(\F_{5})$. In other words, up to projective equivalence, every pair consisting of quaternary representation and a quinternary representation lifts uniquely to a golden ratio representation.
 \item If $M$ is ternary, then $\cX_M(\F_4) = \cX_M(\H)$, $\cX_M(\F_5) = \cX_M(\D)$, and $\cX_M(\F_8) = \cX_M(\U)$.
 In other words, up to projective equivalence, every quarternary representation of $M$ lifts uniquely to a hexagonal representation, every quinternary representation of $M$ lifts uniquely to a dyadic representation, and
 every octernary representation of $M$ lifts uniquely to a near-regular representation.
 \item If $q,p_1,p_2$ are prime powers with $3 \nmid q$ and $q-2 = (p_1 - 2)(p_2 - 2)$, then for every ternary matroid $M$ there is a canonical bijection between $ \cX_M(\F_q)$ and $\cX_M(\F_{p_1}) \times \cX_M(\F_{p_2})$.
  Such identifications occur, for example, for $(q,p_1,p_2) \in \{  (  8, 4, 5),\, ( 29, 5,11),\, ( 32, 4,17),\, ( 53, 5,19) \}$.
  \end{enumerate}
\end{thmA}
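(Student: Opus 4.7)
The plan is to reduce each claim to an application of the general lift theorem for pastures that is the central technical result promised in the abstract (a generalization of Pendavingh--van Zwam's Lift Theorem). The underlying identification is that, once the formalism of pastures and foundations is available, the rescaling class space of a matroid $M$ over a pasture $P$ is represented by the foundation $F_M$, i.e., $\cX_M(P) = \Hom(F_M, P)$. Consequently, every claimed bijection of the form $\cX_M(P) \simeq \cX_M(P_1) \times \cX_M(P_2)$ is equivalent to the assertion that a canonical map
\[
\Hom(F_M, P) \;\longrightarrow\; \Hom(F_M, P_1 \times P_2)
\]
is a bijection for every matroid $M$ in the relevant class, and the lift theorem supplies a criterion for this in terms of $P$ being a suitable lift of $P_1 \times P_2$.

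For part (1), I would construct the pasture morphism $\G \to \F_4 \times \F_5$ coming from the two well-known morphisms $\G \to \F_4$ (sending the golden ratio to a primitive element of $\F_4$) and $\G \to \F_5$ (sending it to $3 \in \F_5$, since $3^2 = 3+1$ in $\F_5$), and then apply the lift theorem to conclude the desired bijection of Hom-sets. For part (2), the ternary hypothesis means $F_M$ admits a morphism to $\F_3$, so one works inside the subcategory of foundations that factor through $\F_3$. Within this subcategory I would certify $\H$, $\D$, $\U$ as lifts of $\F_4$, $\F_5$, $\F_8$ respectively, via the natural pasture morphisms compatible with the reduction to $\F_3$, and apply the lift theorem three times. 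For part (3), the same recipe applies: under the numerical hypotheses $3 \nmid q$ and $q-2=(p_1-2)(p_2-2)$, one builds a pasture morphism $\F_q \to \F_{p_1} \times \F_{p_2}$ compatible with the $\F_3$-reduction, and the lift theorem yields the bijection for every ternary matroid; the four explicit triples $(q,p_1,p_2)$ reduce to arithmetic verifications of the product identity.

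The principal obstacle will be verifying, in each case, the universal-lift hypothesis of the general theorem, which amounts to checking a finite list of ``hexagonal'' identities in the target pasture. This is a delicate but finite task: one has to confirm that the pasture morphism is surjective on underlying sets and that the fundamental-element/hexagonal data governing pasture morphisms out of a foundation are preserved and detected correctly by the projections to $P_1$ and $P_2$. Once this finite check is carried out for each of the six pastures appearing in the theorem, parts (1)--(3) follow directly from the lift theorem together with the representability of $\cX_M$ by the foundation functor.
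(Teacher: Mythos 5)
Your overall philosophy is right: all three parts reduce, via $\cX_M(P)=\Hom(F_M,P)$ and the universal property of products, to comparing $\Hom(F_M,-)$ out of the foundation, and the engine is a lift theorem that guarantees a bijection $\Hom(F_M,\Lift P)\to\Hom(F_M,P)$ when $F_M$ lies in the relevant coreflective subcategory. But the proposal has a genuine gap in the \emph{mechanism}: you repeatedly phrase the key step as ``construct a morphism $Q\to P_1\times P_2$ and then apply the lift theorem,'' whereas the lift theorem does not turn the existence of a morphism $Q\to P$ into a bijection $\Hom(F_M,Q)\cong\Hom(F_M,P)$. What it gives is that the specific counit morphism $\lambda_P:\cL P\to P$ induces a bijection on $\Hom(F_M,-)$. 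So the actual task is to \emph{compute} the lift and show it is isomorphic to the target pasture --- for part (1), to compute that $\cL_{\grs}(\F_4\times\F_5)\simeq\G$ (the paper's Example~\ref{ex:Glift} does exactly this computation from the list of fundamental elements); for part (2), to compute that $\cL_{\ternary}\F_4\simeq\H$, $\cL_{\ternary}\F_5\simeq\D$, $\cL_{\ternary}\F_8\simeq\U$ (done via the hexagon classification); and for part (3), to show that $\cL_{\ternary}\F_q\simeq\cL_{\ternary}(\F_{p_1}\times\F_{p_2})$, which is a counting argument matching the numbers of hexagons of each type using $q-2=(p_1-2)(p_2-2)$ and $3\nmid q$ (Propositions~\ref{prop: hexagons in products}, \ref{prop: hexagons in partial fields} and Corollary~\ref{cor: numbers of hexagons in Fq by type}).

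The gap is most serious for part (3): there simply is no pasture morphism $\F_q\to\F_{p_1}\times\F_{p_2}$ in general (for instance $\F_8$ and $\F_5$ have different characteristics, so no morphism $\F_8\to\F_4\times\F_5$ exists), so the step ``one builds a pasture morphism $\F_q\to\F_{p_1}\times\F_{p_2}$'' fails outright. The bijection is obtained only through the chain $\cX_M(\F_q)\cong\Hom(F_M,\cL_\ternary\F_q)\cong\Hom(F_M,\cL_\ternary(\F_{p_1}\times\F_{p_2}))\cong\cX_M(\F_{p_1})\times\cX_M(\F_{p_2})$, and the nontrivial content --- how the numerical hypothesis enters and why the $3\nmid q$ condition is needed --- is precisely the hexagon count, which you do not address. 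Related to this, you also don't distinguish which lift functor applies where: part (1) needs the GRS-lift (valid for all matroids), while parts (2) and (3) need the ternary lift, whose applicability requires the structure theorem that $F_M$ is a tensor product of copies of $\U,\D,\H,\F_3$ for $M$ ternary; your phrase ``compatible with the reduction to $\F_3$'' is not how the hypothesis is actually used, since $\F_4,\F_5,\F_8,\H,\D,\U$ admit no morphism to or from $\F_3$.
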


For (1), Vertigan proved (cf. \cite[Thm.\ 4.9]{Pendavingh-vanZwam10b}) that a matroid is golden ratio if and only if it is both quaternary and quinternary; we have not seen the uniqueness assertion stated in the literature but it can be deduced from the techniques of \cite{Pendavingh-vanZwam10a,Pendavingh-vanZwam10b}.
For (2), it was previously known that such a lift {\em exists} in each case.\footnote{See \cite[Thm.\ 1.1]{Whittle97} for ternary plus quinternary implies dyadic, \cite[Thm.\ 1.2]{Whittle97} for ternary plus quaternary implies hexagonal, and \cite[Thm.\ 3.2]{Whittle05} for ternary plus octernary implies near-regular.} 
So the main novelty in this case is that we're able to establish {\em uniqueness} in addition to existence. To the best of our knowledge, both the existence and uniqueness assertions implicit in (3) are new.

Our method of proof for existence is substantially different from the previous work in the subject, in that we make systematic use of Tutte's homotopy theory along with `abstract nonsense' about the category of pastures. 

\medskip

Part (1) of \autoref{thmA} will be proved in \autoref{thm:Vertigan}, part (2) in \autoref{thm: unique lifts of representations of ternary matroids}, and part (3) in \autoref{thm: identifications of products of rescaling class spaces}.

\medskip

Our approach to proving such results is based on embedding partial fields into the larger category of {\em pastures}, which contain hyperfields as well as partial fields and admit both products and tensor products. In addition to providing a more structured framework for thinking about such results, and thereby allowing us to prove uniqueness as well as existence assertions, our approach allows us to treat oriented matroids (for example) in the same way one would treat matroids over a partial field. Indeed, oriented matroids are just matroids over the sign hyperfield $\S$, and the rescaling class space $\cX_M(\S)$ is the set of {\em reorientation classes} of $M$. The generalized setting of pastures allows us to obtain results such as the following:

\begin{thmA}\label{thmB}
Let $M$ be a matroid. 
\begin{enumerate}
\item If $M$ is ternary, then $\cX_M(\F_7) = \cX_M(\D\otimes\H)$. In other words,  up to projective equivalence, every septernary representation of $M$ lifts uniquely to a $\D\otimes\H$-representation.
\item If $M$ has no minor isomorphic to $U_{2,5}$ or $U_{3,5}$, then every reorientation class lifts uniquely to a projective equivalence class of $\D$-representations.
\item There is a natural bijection between $\cX_M(\U)$ and $\cX_M(\H) \times \cX_M(\S)$. In other words, up to projective equivalence, every pair consisting of a hexagonal representation and a reorientation class lifts uniquely to a near-regular representation.
\end{enumerate}
\end{thmA}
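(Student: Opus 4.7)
My strategy rests on the foundational correspondence from the authors' earlier work: to every matroid $M$ is associated a pasture $F_M$, its \emph{foundation}, satisfying $\cX_M(P) \cong \Hom_{\Pastures}(F_M, P)$ naturally in the pasture $P$. Under this translation, a bijection of rescaling class spaces for all $M$ in a given class becomes a morphism of pastures inducing a bijection of Hom sets out of the relevant foundations, and products of rescaling class spaces correspond to products of target pastures.

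For part (3), I would produce a natural morphism $\U \to \H \times \S$ in $\Pastures$ (with $\H \times \S$ the categorical product), whose two components are the specialization of the near-regular parameter to a primitive sixth root of unity, giving $\U \to \H$, and the sign map $\U \to \D \to \S$. By the universal property of the product, $\Hom(F_M, \H \times \S) = \Hom(F_M, \H) \times \Hom(F_M, \S)$, so it suffices to show that the induced map
\[
\Hom(F_M, \U) \;\longrightarrow\; \Hom(F_M, \H) \times \Hom(F_M, \S)
\]
is a bijection for every $M$. Parts (1) and (2) follow the same template with different morphisms. For part (1) I would identify a morphism relating $\F_7$ and $\D \otimes \H$ (both contain compatible dyadic and hexagonal data) and show it induces the claimed bijection on Hom sets out of foundations of ternary matroids; note that since $\otimes$ is a coproduct in $\Pastures$, the description of $\Hom(F_M, \D \otimes \H)$ is not automatic, so extra ternary-specific analysis of $F_M$ is needed. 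For part (2), the exclusion of $U_{2,5}$ and $U_{3,5}$ minors is expected to be exactly the condition ensuring $F_M$ is generated by cross-ratios coming from $U_{2,4}$ minors; on such generators, the morphism $\D \to \S$ induces a bijection $\Hom(F_M, \D) \to \Hom(F_M, \S)$, since the sign alone determines a dyadic cross-ratio of this restricted form.

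The main obstacle throughout is \textbf{uniqueness} (injectivity) of the Hom-set maps. Existence is largely known: Whittle's theorems give partial-field lifts for (1), classical Bland--Las Vergnas-type orientability results give the lift in (2), and product formalism handles (3). Injectivity is the new content, and for it I would invoke the Gelfand--Rybnikov--Stone reformulation of Tutte's homotopy theorem, which characterizes a pasture morphism out of $F_M$ by its values on cross-ratios modulo hexagonal relations. This reduces each injectivity claim to a finite, local computation in the target pasture; the substantive technical work lies in making the foundation generators explicit in each relevant class of matroids and verifying on them that the component morphisms jointly detect equality. For part (3) this amounts to showing that a near-regular cross-ratio is determined by its image in $\H$ together with its sign, which is a concrete check on the finitely many hexagon types appearing in $F_M$.
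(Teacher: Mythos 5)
Your framework is exactly the paper's: identify $\cX_M(P)$ with $\Hom(F_M,P)$, produce an appropriate comparison morphism, and reduce to bijectivity of the induced map on Hom sets. The morphism $\U\to\H\times\S$ you write down (near\hyph regular parameter to a sixth root of unity in one coordinate, sign specialization in the other) is the correct one; it coincides with the counit $\lambda_{\H\times\S}\colon\Lift_\ternary(\H\times\S)\to\H\times\S$ once one identifies $\Lift_\ternary(\H\times\S)\cong\U$, which is what the paper computes via its ``hexagons in products'' analysis (\autoref{prop: hexagons in products}).

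The genuine gap is in the source of the bijectivity. You defer the injectivity/surjectivity claims to ``the Gelfand--Rybnikov--Stone reformulation of Tutte's homotopy theorem,'' but that result alone is not sharp enough: it only produces the $\GRS$-coreflection, and $\Lift_\grs(\S)=\S$, so for part (2) the $\GRS$-machine furnishes no information whatsoever --- the paper says this explicitly in the introduction. What is actually needed, and what your proposal does not articulate, is the \emph{structure theorem} for foundations (Theorem 5.9 of \cite{Baker-Lorscheid20}, restated here as \autoref{thm: structure theorem for foundations of matroids wlum} and \autoref{thm: structure theorem for foundations of ternary matroids}): for ternary (resp.\ $\WLUM$) $M$, the pasture $F_M$ is a tensor product of copies of $\U,\D,\H,\F_3$ (resp.\ with $\F_2$ also allowed). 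This is a substantially stronger statement than ``$F_M$ is generated by cross-ratios with $\GRS$-relations,'' and it is precisely what lets the paper define the ternary and $\WLUM$-lifts as genuine coreflections and reduce the bijectivity claim to a universal-property argument plus a one-hexagon-at-a-time computation. Your suspicion in part (2) that excluding $U_{2,5}$ and $U_{3,5}$ should make $F_M$ ``generated by $U_{2,4}$ cross-ratios'' is not the right dichotomy --- that generation holds for all matroids; what the $\WLUM$ hypothesis gives is the tensor decomposition into hexagon-type factors. Without that decomposition, the ``finite, local computation'' you describe at the end has no valid starting point.
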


Once again, for (1) and (2) existence of lifts was previously known (they follow from \cite[Thm.\ 1.3]{Whittle97} and the Lee--Scobee theorem \cite{Lee-Scobee99}, respectively), 
so the novelty here is primarily in the uniqueness assertions and the method of proof. As far as we know, both the existence and uniqueness assertions in (3) are new.

\medskip

Part (1) of \autoref{thmB} will be proved in \autoref{thm: unique lifts of representations of ternary matroids}, part (2) in \autoref{thm: unique lifts of representations of matroids wlum}, and part (3) in \autoref{cor: rescaling class spaces of U, H, and S}.

\subsection*{The main new technique}
Our starting point for the proof of the lifting results described in the previous section is a generalization of the Lift Theorem of Pendavingh and van Zwam from partial fields to pastures. The Lift Theorem associates to each partial field $P$ a partial field $\Lift_{\pvz}(P)$ and a homomorphism $\Lift_{\pvz}(P) \to P$ with the property that every representation of a matroid $M$ over $P$ lifts to a representation of $M$ over $\Lift_{\pvz}(P)$. By generalizing the Lift Theorem to pastures, we not only widen the scope of the result, we also obtain a more precise version which allows us to prove the idempotence of $\Lift_{\pvz}(P)$ conjectured in \cite[Conj.\ 6.7]{Pendavingh-vanZwam10b}.
We denote our generalized lift of a pasture $P$ by $\Lift_{\grs}(P)$, since our proof that every matroid representation lifts (uniquely) from $P$ to $\Lift_{\grs}(P)$ relies heavily on the results of Gelfand--Rybnikov--Stone (\cite{Gelfand-Rybnikov-Stone95}), as amplified and reinterpreted in \cite{Baker-Lorscheid20}. The work of Gelfand--Rybnikov--Stone is itself based on earlier work of Tutte (\cite{Tutte58a}) and Wenzel (\cite{Wenzel89}, \cite{Wenzel91}).

\medskip

Unfortunately, the general nature of our souped-up Lift Theorem -- which applies to all pastures and all matroids -- means that in certain concrete situations of interest it fails to give sharp results. 
For this reason, we define various other lifts which only provide information about a restricted class of matroids, but which give optimal results when they apply.

\medskip

For example, the $\GRS$-lift $\Lift_{\grs}(\S)$ of the sign hyperfield $\S$ is equal to $\S$ itself, which furnishes no information. However, for each pasture $P$ we also define a {\em $\WLUM$-lift} $\Lift_{\cW}(P)$, which has the property that for each matroid $M$ without large uniform minors (i.e., with no minor isomorphic to $U_{2,5}$ or $U_{3,5}$), 
every rescaling equivalence class of $P$-representations lifts uniquely to $\Lift_{\cW}(P)$.
The $\WLUM$-lift of $\S$ is equal to $\D$, and the generalized Lee--Scobee theorem follows.

\subsection*{A crash course on pastures}

In order to state our lifting results more precisely, we first recall some basic facts about pastures.
We give just a brief sketch here; see \autoref{section: Background} below for more details.

\medskip

A \emph{pointed monoid} is a (multiplicatively written) commutative monoid $P$ with identity element $1$ and an \emph{absorbing element} $0$ that satisfies $0\cdot a=0$ for all $a\in P$. We write $P^\times$ for the group of invertible elements in $P$. We denote by $\Sym_3(P)$ the quotient of $P^3$ by the $S_3$-action that permutes coefficients, and we write $a+b+c$ for the class of $(a,b,c)$ in $\Sym_3(P)$. 

\medskip

A \emph{pasture} is a pointed monoid $P$ such that every nonzero element is invertible (i.e., $P^\times=P-\{0\}$), together with a subset $N_P$ of $\Sym_3(P)$ (called the \emph{nullset of $P$}) such that:
 \begin{enumerate}[label={(P\arabic*)}]
  \item\label{P1} $a+0+0\in N_P$ if and only if $a=0$.
  \item\label{P2} If $a+b+c\in N_P$ and $d \in P^\times$ then $ad+bd+cd \in N_P$.
  \item\label{P3} There is a unique element $-1\in P^\times$ such that $1+(-1)+0\in N_P$.
 \end{enumerate}

\medskip

We call $N_P$ the \emph{nullset of $P$}, and say that \emph{$a+b+c$ is null}, and write symbolically $a+b+c=0$, if $a+b+c\in N_P$. We write $-a$ for $(-1)\cdot a$ and $a+b-c=0$ or $a+b=c$ for $a+b+(-c)=0$. 
We often write $a+b \in N_P$ instead of $a+b+0 \in N_P$.

\medskip

A \emph{morphism of pastures} is a multiplicative map $f:P_1\to P_2$ with $f(0)=0$ and $f(1)=1$ such that $f(a)+f(b)+f(c)\in N_{P_2}$ whenever $a+b+c\in N_{P_1}$. 
This defines the category $\Pastures$ of pastures.
 
\begin{ex*}
We can associate with a field $K$ the following pasture $P$: as multiplicative monoids, we define $P=K$; the nullset of $P$ consists of all $a+b+c\in\Sym_3(P)$ such that $a+b+c=0$ in $K$.
\end{ex*}

\begin{ex*}
A {\em partial field} is given by a pair $(G,R)$ of a ring $R$ together with a subgroup $G$ of the unit group $R^\times$ that contains $-1$. The associated pasture is $P=G\cup\{0\}$, as a monoid with zero, together with the nullset $N_P$ consisting of all elements $a+b+c\in\Sym_3(P)$ such that $a+b+c=0$ in $R$. 

The \emph{regular partial field} corresponds to the pair $(\{ \pm 1 \},\Z)$. As a pasture, the underlying monoid of $\Funpm$ is $\{ 0, 1, -1 \}$ with the usual multiplication, and the nullset is $N_{\Funpm} \ = \ \{1 + (-1)\}$. The regular partial field is an initial object of $\Pastures$, i.e., there is a unique morphism from $\Funpm$ to $P$ for every pasture $P$.
\end{ex*}

\begin{ex*}
 The \emph{Krasner hyperfield} is the pasture $\K$ whose underlying monoid is $\{0,1\}$ with the usual multiplication, and whose nullset is $N_{\K}=\{1+1,1+1+1\}$. It is a terminal object of $\Pastures$, i.e., there is a unique morphism from $P$ to $\K$ for every pasture $P$.
 
 The \emph{sign hyperfield} is the pasture $\S$ whose underlying monoid is $\{0,1, -1 \}$ with the usual multiplication, and whose nullset is $N_{\S}=\{1+ (-1), 1+1+(-1), 1+(-1)+(-1) \}$. 
\end{ex*}

\subsubsection*{Fundamental pairs, fundamental elements, and hexagons}

\medskip

A \emph{fundamental pair} in a pasture $P$ is a pair $(a,b)\in (P^\times)^2$ such that $a+b-1 \in N_P$. We denote the set of fundamental pairs in $P$ by $P^\pair$.

\medskip

A \emph{fundamental element} of $P$ is an element $a \in P^\times$ belonging to some fundamental pair. We denote the set of fundamental elements of $P$ by $P^\fund$.

\medskip

There is an action of the dihedral group $D_3 =\gen{\rho,\sigma\mid \rho^3=\sigma^2=(\sigma\rho)^2=e}$ of order 6 on the set of fundamental pairs defined by $\sigma (a,b) = (b,a)$ and
$\rho (a,b) = (-a^{-1}b,a^{-1})$.
A \emph{hexagon} of $P$ is an orbit of this action.
 
\subsubsection*{Generators and relations}

One can define pastures as algebras over $\Funpm$ given by certain generators and relations. 

\medskip

If $P$ is a pasture and $\{t_i\}_{i\in I}$ a set of indeterminates, there is a \emph{free $P$-algebra} on $\{t_i\}$, denoted $P\gen{t_i\mid i\in I}$, which satisfies a variant of the universal property for free algebras (more precisely, the functor $I\mapsto P\gen{t_i\mid i\in I}$ is left adjoint to the functor $Q\mapsto Q^\times$ from $P$-pastures to sets).

\medskip

If $S\subset \Sym_3(P)$ is a set of elements of the form $a+b+c$ with $ab\neq 0$, one can define the \emph{quotient $\past PS$ of $P$ by $S$}, which satisfies the expected universal property for quotients.

\medskip

Combining these operations, one can present every pasture by generators and relations as $\past{\F_1^\pm \gen{t_i \; | \; i \in I }}S$ for suitable generators $\{t_i\}$ and relations $S\subset \Sym_3(P)$.

\begin{ex*}
We have the following presentations for various partial fields (identified with the corresponding pastures) that will be important in the sequel:
\begin{align*}
 & \text{the \emph{dyadic partial field}}       & \D \ &= \ \pastgen\Funpm{z}{z+z-1}; \\
 & \text{the \emph{hexagonal partial field}}    & \H \ &= \ \pastgen\Funpm{z}{z^3 + 1, \; z+z^{-1}-1}; \\
 & \text{the \emph{near-regular partial field}} & \U \ &= \ \pastgen\Funpm{x,y}{x+y-1}; \\
 & \text{the \emph{golden ratio partial field}} & \G \ &= \ \pastgen\Funpm{z}{z^2+z-1}.
\end{align*}
\end{ex*}

\subsection*{Matroids over pastures}

We recall the following facts from \cite{Baker-Lorscheid20} (see also \autoref{section: Background} below):

\begin{enumerate}
\item Given a matroid $M$ and a pasture $P$, one can define the notion of a {\em $P$-representation of $M$} generalizing the usual notion of matroid representability over partial fields. 
\item One can define an equivalence relation called {\em rescaling equivalence} which generalizes the usual notions of projective equivalence over partial fields and reorientation equivalence for oriented matroids. The set of rescaling equivalence classes of representations of $M$ over $P$ is denoted by $\cX_M(P)$, which extends our previous notation for partial fields.
\item The functor from pastures to sets taking a pasture $P$ to the set $\cX_M(P)$ is representable by a pasture $F_M$ called the {\em foundation} of $M$. In other words, there is a natural bijection $\cX_M(P) \cong \Hom(F_M,P)$ which is functorial in $P$. 
\end{enumerate}

\subsection*{Reflections and coreflections}

A full subcategory $\cD$ of a category $\cC$ is called {\em coreflective} if the inclusion functor from $\cD$ to $\cC$ has a right adjoint.
Concretely, what this means is that every object $X \in \Ob(\cC)$ admits a functorial ``lift'' $\cL_{\cD} X$, together with a morphism $\lambda_X : \cL_{\cD} X \to X$, satisfying:

\medskip
\noindent\textit{(Universal Property of Coreflections).}
For every morphism $\varphi : Y \to X$ with $Y \in \Ob(\cD)$, there is a unique morphism $\hat\varphi: Y\to\cL_{\cD} X$ such that $\varphi=\lambda_X\circ\hat\varphi$, i.e.\ the diagram
\[
 \begin{tikzcd}[column sep=80pt, row sep=20pt]
                                           & \Lift_\cD X \ar[d,"\lambda_X"] \\
  Y \ar[ur,"\hat\varphi"] \ar[r,"\varphi"'] & X
 \end{tikzcd}
\]
commutes.

\medskip

For example, the universal cover $\hat{X}$ of a semilocally simply connected topological space $X$ provides a coreflection onto the subcategory of simply connected spaces, with $\lambda_X : \hat{X} \to X$ the universal covering map.

\medskip

Given an inclusion of $\cD$ as a full subcategory of $\cC$, any two coreflections from $\cC$ onto $\cD$ are naturally isomorphic (this is a well-known general property of adjoint functors). Moreover, it follows from the universal property of coreflections that $\cL_{\cD}(Y)=Y$ for every $Y \in \Ob(\cD)$, and in particular that the lift construction is \emph{idempotent}, i.e., 
$\cL_{\cD} (\cL_{\cD} X) \simeq \cL_{\cD} X$ for every $X \in \Ob(\cC)$.

\medskip

Although less central to the paper, we will also make use of {\em reflective} subcategories. 
A full subcategory $\cD$ of a category $\cC$ is called {\em reflective} if the inclusion functor from $\cD$ to $\cC$ has a left adjoint.
Concretely, what this means is that every object $X \in \Ob(\cC)$ admits a functorial ``reflection'' $\cR_{\cD} X$, together with a morphism $\rho_X : X \to \cR_{\cD} X$, satisfying:

\medskip
\noindent\textit{(Universal Property of Reflections).}
For every morphism $\varphi : X \to Y$ with $Y \in \Ob(\cD)$, there is a unique morphism $\hat\varphi: \cR_{\cD} X \to Y$ such that $\varphi=\hat\varphi\circ\rho_X$.

\medskip

For example, the category of abelian groups is a reflective subcategory of the category of groups, with the reflection given by the canonical abelianization map $G \to G^{\rm ab}$.

\subsection*{The \texorpdfstring{$\GRS$}{GRS} coreflection}

We note that since the foundation $F_M$ of $M$ represents the functor $\cX_M(\cdot)$ from pastures to sets, it follows formally from `abstract nonsense' that:

\medskip
\begin{quote}
If $\cD$ is a coreflective subcategory of the category $\Pastures$ of pastures, then for every matroid $M$ with $F_M \in \cD$ and every pasture $P$, every rescaling class of $P$-representations of $M$ lifts uniquely to $\cL_{\cD} P$.
\end{quote}
\medskip

Our first main result about coreflective subcategories of $\Pastures$ is the following:

\par\needspace{2\baselineskip}  
\begin{thmA}\label{thmC} \ 
\begin{enumerate}
\item There is a canonical coreflection $\cL_{\grs} : \Pastures \to \cG$ onto a certain full subcategory $\cG$ of $\Pastures$, containing all foundations of matroids, taking a pasture $P$ to its {\em $\GRS$-lift}.
\item There is a full subcategory $\MPF$ of $\Pastures$, properly containing the category $\PartFields$ of partial fields, which admits a reflection $\Pi : \MPF \to \PartFields$.
\item When $P$ is a partial field, its $\GRS$-lift $\cL_{\grs}(P)$ belongs to $\MPF$, and the associated partial field $\Pi(\cL_{\grs}(P))$ is equal to the Pendavingh-van Zwam lift $\cL_{\pvz}(P)$.
\end{enumerate}
\end{thmA}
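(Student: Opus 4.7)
My plan for (1) is to identify $\cG$ as the full subcategory of pastures $Q$ admitting a presentation
\[
Q \;\cong\; \past{\Funpm\gen{Q^\times}}{R}
\]
in which $R$ consists only of the multiplicative relations holding in $Q^\times$ together with a single null relation of the form $a+b-1\in N_Q$ for each fundamental pair $(a,b)\in Q^\pair$ (compatibly with the dihedral $D_3$-action on hexagons). This is the natural pasture-theoretic shadow of the Gelfand--Rybnikov--Stone framework developed in \cite{Baker-Lorscheid20}. For an arbitrary pasture $P$, I would then construct $\Lift_{\grs}(P)$ as the pasture generated by symbols $\tilde a$ (one for each $a\in P^\times$), subject to the multiplicative relations of $P^\times$ and one null relation $\tilde a + \tilde b - 1$ for each hexagon of $P$. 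The counit $\lambda_P\colon\Lift_{\grs}(P)\to P$ sends $\tilde a \mapsto a$, and the universal property is immediate from the presentation: any $\varphi\colon Q\to P$ with $Q\in\cG$ is determined by the image of the generators of $Q^\times$ together with their action on hexagons, and this data lifts uniquely to a morphism $\hat\varphi\colon Q\to\Lift_{\grs}(P)$.

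To see that every foundation $F_M$ lies in $\cG$---which is what makes this coreflection useful for matroid theory---I would invoke the cross-ratio presentation of foundations from \cite{Baker-Lorscheid20}: $F_M$ is generated by cross ratios attached to fundamental pairs, modulo only hexagon and hex-triangle relations, which by direct inspection have exactly the form required for membership in $\cG$. Combined with the representability $\cX_M(P)\cong\Hom(F_M,P)$, the coreflection then produces a natural bijection $\cX_M(P)\cong\cX_M(\Lift_{\grs}(P))$ for every pasture $P$, yielding the general lifting statement as a formal consequence.

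For (2) and (3) together, I would define a mock partial field as a pasture $P$ whose \emph{universal ring}
\[
R_P \;:=\; \Z[P^\times]\big/\bigl(a+b+c \;:\; a+b+c\in N_P\bigr)
\]
has the property that $P^\times\to R_P^\times$ is injective and that a triple lies in $N_P$ iff it sums to zero in $R_P$. Partial fields form a full subcategory (their ambient ring serves as $R_P$), and examples obtained by quotienting partial fields by non-ring-theoretic hexagon relations show that $\MPF$ strictly contains $\PartFields$. The reflection $\Pi$ sends $P\in\MPF$ to the partial field $\bigl(\im(P^\times\to R_P^\times),\,R_P\bigr)$, and its universal property follows directly from that of the ring $R_P$. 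To complete (3), I would verify that when $P$ is a partial field, the presentation of $\Lift_{\grs}(P)$ produced in step (1) has universal ring equal to $\Z[t_{a,b} : (a,b)\in P^\pair]$ modulo the relations $t_{a,b}+t_{b,a}=1$ together with the multiplicative relations of $P^\times$; this is precisely the ring that Pendavingh and van Zwam use to define $\Lift_{\pvz}(P)$, and a generator-by-generator matching of the two presentations yields $\Pi(\Lift_{\grs}(P))=\Lift_{\pvz}(P)$. The main obstacle will be showing that $\Lift_{\grs}(P)$ actually lies in $\MPF$, i.e., that passing from the pasture-level hexagon relations to the ring-level relations introduces no unexpected identifications; for this I expect to exploit the combinatorial structure of hexagons in partial fields, together with the freeness of the generator set in Pendavingh--van Zwam's construction, to rule out collapse of $P^\times$ in $R_{\Lift_{\grs}(P)}$.
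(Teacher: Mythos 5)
Your overall plan of attack (explicit presentation for $\Lift_{\grs}$, universal-ring construction for $\MPF$ and $\Pi$, then a comparison with the Pendavingh--van Zwam presentation) mirrors the paper's strategy, but two of your key constructions are substantively wrong, and both would break the universal properties you need.

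First, your proposed $\Lift_{\grs}(P)$ takes generators indexed by \emph{all} of $P^\times$ and imposes \emph{all} multiplicative relations of $P^\times$. This forces $\Lift_{\grs}(P)^\times\cong P^\times$, so your ``lift'' changes only the nullset and never the unit group. The paper's $\GRS$-lift is very deliberately different: the generators are indexed by the set $P^\fund$ of \emph{fundamental elements}, and the only multiplicative relations imposed are the specific $2$-term and $3$-term relations \ref{GRS2}, \ref{GRS4}, \ref{GRS5}. This is not an optional simplification; it is the whole content of the construction. With your presentation, the counit $\lambda_P$ is a bijection on $P^\times$, and \autoref{ex: GRS-lift of a partial field that is not a partial field} shows this cannot be the right object: there the paper exhibits a partial field $P$ with $ae=df$ in $P^\times$ but $ae\ne df$ in $\Lift_{\grs}(P)^\times$, i.e., the correct $\GRS$-lift genuinely \emph{enlarges} the unit group by forgetting non-$3$-term multiplicative relations. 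Your construction could never produce that, and as a result the universal property in \autoref{prop: universal property of GRS-lifts} would fail: a morphism $L\to P$ from $L\in\cG$ with $L^\times$ larger than $P^\times$ has nowhere to lift if your target has unit group exactly $P^\times$. The paper's proof of the universal property also leans on the fact that in a $\GRS$-lift the fundamental elements generate the unit group, which is exactly what lets one reconstruct the lifted map from its effect on $P^\fund$; your characterization of $\cG$ discards this.

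Second, the definition you give for a mock partial field -- $P^\times\to R_P^\times$ injective \emph{and} $a+b+c\in N_P$ iff $a+b+c=0$ in $R_P$ -- is not weaker than the definition of a partial field: it \emph{is} the definition of a partial field (as a pasture), so under your definition $\MPF=\PartFields$ and the reflection $\Pi$ is the identity, contradicting your own later claim that $\MPF$ strictly contains $\PartFields$. The paper's $\MPF$ is defined by the single, much weaker condition $R_P\ne 0$; the reflection $\Pi(P)=(\im(P^\times\to R_P),R_P)$ then genuinely collapses $P$ when $P\to R_P$ fails to be injective or when $\gen{N_P}\cap\Sym_3(P)$ exceeds $N_P$. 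That is precisely what happens for $\Lift_{\grs}(P)$ with $P$ a partial field: $\Lift_{\grs}(P)$ is typically not a partial field, but it maps to $P$, so $R_{\Lift_{\grs}(P)}\ne 0$ and \autoref{lem:MPF reflection} applies. To finish part (3) you also need the observation (made in \autoref{prop: PvZ-lift as quotient of GRS-lift}) that the relation \ref{GRS4}, which is absent from Pendavingh--van Zwam's list, becomes redundant once you pass to the universal ring; your ``generator-by-generator matching'' would have to handle this, and your sketch does not. Finally, note that the $\PvZ$ variables are $t_a$ for $a\in P^\fund$, not $t_{a,b}$ for fundamental pairs; the $t_{a,b}$ indexing belongs to the ternary/$\WLUM$ lifts, not the $\GRS$-lift.
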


Part (1) of \autoref{thmC} will be proved in \autoref{prop: universal property of GRS-lifts}, part (2) in \autoref{lem:MPF reflection}, and part (3) in \autoref{prop: PvZ-lift as quotient of GRS-lift}.

\medskip

As a formal consequence of (3), we obtain a proof of the Pendavingh--van Zwam idempotence conjecture, along with a new proof of the lift theorem from \cite{Pendavingh-vanZwam10b}:

\begin{cor*}\ 
\begin{enumerate}
\item $\Lift_{\pvz}$ is an idempotent functor from the category of partial fields to itself, i.e., $\Lift_{\pvz}(\Lift_{\pvz}(P))= \Lift_{\pvz}(P)$ for every partial field $P$.
\item For every partial field $P$ and every matroid $M$, every projective equivalence class of $P$-representations of $M$ lifts {uniquely} to $\Lift_{\pvz}(P)$.
\end{enumerate}
\end{cor*}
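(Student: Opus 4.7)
The plan is to derive both parts of the corollary from Theorem~C by chaining its universal properties. The starting point is representability of $\cX_M$ by the foundation $F_M$: combined with $F_M\in\cG$ (Theorem~C(1)) and the coreflection property of $\Lift_{\grs}$, this gives a natural bijection $\cX_M(Q)=\Hom(F_M,Q)\cong\Hom(F_M,\Lift_{\grs}(Q))$ for every pasture $Q$, via composition with the counit $\lambda_Q$.

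For part~(2), given $\varphi\colon F_M\to P$, the coreflection produces a unique $\psi\colon F_M\to\Lift_{\grs}(P)$ with $\lambda_P\circ\psi=\varphi$. Since $\Lift_{\grs}(P)\in\MPF$ and $P\in\PartFields$ by Theorem~C(3), the reflection~$\Pi$ factorizes $\lambda_P=\bar\lambda\circ\pi$ uniquely through the reflection unit $\pi\colon\Lift_{\grs}(P)\to\Lift_{\pvz}(P)$, producing the canonical map $\bar\lambda\colon\Lift_{\pvz}(P)\to P$. Then $\hat\varphi:=\pi\circ\psi$ satisfies $\bar\lambda\circ\hat\varphi=\varphi$, giving existence. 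For uniqueness, any other lift $\hat\varphi'$ coreflects to a unique $\tilde\varphi\colon F_M\to\Lift_{\grs}(\Lift_{\pvz}(P))$ with $\lambda_{\Lift_{\pvz}(P)}\circ\tilde\varphi=\hat\varphi'$; naturality of $\lambda$ yields $\lambda_P\circ\Lift_{\grs}(\bar\lambda)\circ\tilde\varphi=\bar\lambda\circ\hat\varphi'=\varphi$, so the coreflection uniqueness clause forces $\Lift_{\grs}(\bar\lambda)\circ\tilde\varphi=\psi$. Combined with the splitting $\Lift_{\grs}(\bar\lambda)\circ\Lift_{\grs}(\pi)=\id_{\Lift_{\grs}(P)}$ (obtained from $\Lift_{\grs}(\lambda_P)=\id$ via coreflection uniqueness), this pins down $\tilde\varphi=\Lift_{\grs}(\pi)\circ\psi$ once $\Lift_{\grs}(\bar\lambda)$ is known to be an isomorphism, whence $\hat\varphi'=\lambda_{\Lift_{\pvz}(P)}\circ\tilde\varphi=\pi\circ\psi=\hat\varphi$ by naturality.

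Part~(1) then follows cleanly: once $\Lift_{\grs}(\bar\lambda)\colon\Lift_{\grs}(\Lift_{\pvz}(P))\xrightarrow\sim\Lift_{\grs}(P)$ is an isomorphism, applying $\Pi$ gives $\Lift_{\pvz}(\Lift_{\pvz}(P))=\Pi(\Lift_{\grs}(\Lift_{\pvz}(P)))\cong\Pi(\Lift_{\grs}(P))=\Lift_{\pvz}(P)$. Alternatively, one can deduce part~(1) from part~(2) by a Yoneda-type argument inside $\PartFields$, using the natural bijection $\cX_M(\Lift_{\pvz}(\Lift_{\pvz}(P)))\cong\cX_M(\Lift_{\pvz}(P))$ induced by $\bar\lambda$ for every $M$.

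The main obstacle is establishing that $\Lift_{\grs}(\bar\lambda)$ is an isomorphism—equivalently, that $\Lift_{\grs}$ sends the reflection unit $\pi$ to an isomorphism. The splitting obtained above only supplies one of the two needed compositions, and the reverse does not follow from the formal universal properties of a coreflection and a reflection alone; it requires genuine input about the interaction between the subcategories $\cG$ and $\MPF$ underlying Theorem~C. This is the step where the specific construction of the $\GRS$-lift (via \cite{Gelfand-Rybnikov-Stone95}) and the nature of $\MPF$ really do work, rather than abstract adjunction formalism.
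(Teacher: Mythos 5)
Your overall strategy is the paper's: both parts reduce, via $\cX_M(\cdot)=\Hom(F_M,\cdot)$ and \autoref{thmC}, to the claim that the canonical map $\hat\pi\colon\Lift_\grs P\to\Lift_\grs(\Lift_\pvz P)$ induced by the reflection unit $\pi\colon\Lift_\grs P\to\Lift_\pvz P$ is an isomorphism, and you correctly isolate this as the crux. This is exactly \autoref{lemma: GRSPvZcomparison}(1) in the paper. Granting it, your arguments for part~(2) (existence and uniqueness by chaining the coreflection and reflection universal properties) and for part~(1) (apply $\Pi$ to $\Lift_\grs\Lift_\pvz P\simeq\Lift_\grs P$) are in line with the paper's proofs of \autoref{thm: PvZ-lift theorem} and \autoref{cor:idempotence conjecture}.

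The gap is that you leave this key isomorphism unproved, and your diagnosis of where the difficulty lies should be sharpened. The paper starts from the factorization $\lambda_{\grs,P}=\lambda_{\pvz,P}\circ\pi$ of \autoref{prop: PvZ-lift as quotient of GRS-lift} and applies the idempotent functor $\Lift_\grs$ to it. Because $\Lift_\grs$ is the identity on $\Lifts_\grs$ and sends the counits $\lambda_{\grs,\cdot}$ to identities, this yields $\Lift_\grs(\lambda_{\pvz,P})\circ\hat\pi=\id_{\Lift_\grs P}$, which is exactly the one-sided splitting you already have. Your instinct that the reverse composite is not a free consequence of bare adjunction formalism is sound; the missing ingredient is supplied by the explicit construction of $\Lift_\grs$. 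Since $\lambda_{\grs,P}$ and $\lambda_{\pvz,P}$ each induce a bijection on fundamental elements and $\lambda_{\grs,P}=\lambda_{\pvz,P}\circ\pi$, the map $\pi$ is a bijection on fundamental elements; and $\Lift_\grs Q$ is, for any pasture $Q$, generated as a pointed group by the symbols $t_a$ with $a\in Q^\fund$. Hence $\hat\pi$ hits every generator of $\Lift_\grs\Lift_\pvz P$ and is surjective, so the split monomorphism $\hat\pi$ is in fact an isomorphism with inverse $\Lift_\grs(\lambda_{\pvz,P})$. Supplying this step (or invoking \autoref{lemma: GRSPvZcomparison}) closes the gap, after which your deduction of both parts of the corollary goes through as written.
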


Part (1) of this corollary will be proved in \autoref{cor:idempotence conjecture}
 and part (2) in \autoref{thm: PvZ-lift theorem}.

\medskip

\subsection*{A more restrictive but more precise collection of coreflections}

To state the results in this section, it is convenient to restrict ourselves to the category $\FPastures$ of {\em finitary pastures}. We say that a pasture $P$ is {\em finitary} if $P^\times$ is finitely generated and $N_P / P^\times$ is finite. (The restriction to such pastures is not necessary, but it makes it easier to state our results.)

In an ``ideal world,'' there would be an explicitly computable coreflection from $\FPastures$ onto the subcategory $\Foundations$ consisting of all foundations of matroids. If we had such a coreflection, then by computing $\Lift_{\Foundations}(P_1 \times P_2)$ we could, for example, formulate sharp versions of all theorems of the form ``A matroid $M$ is representable over the pastures $P_1$ and $P_2$ if and only if it is representable over $P$.''
Unfortunately, there may not be such a coreflection, but category theory gives us a best possible substitute, a coreflection from $\Pastures$ onto the so-called {\em coreflective hull} of $\Foundations$ (cf.~\autoref{sec:generallifttheorem}), which we denote by $\Lifts$. It is not easy to explicitly compute $\Lifts$ or the coreflection onto it, however,
so we seek to approximate such an ideal result. 

There are two ways of doing this: ``from above''  (meaning constructing a coreflection onto a subcategory $\cD$ containing $\Lifts$) or ``from below'' (meaning constructing a coreflection onto a subcategory $\cD$ contained in $\Lifts$).
The $\GRS$-lift, which is an approximation from above, allows us to prove possibly non-sharp lifting results which hold for {\em all} matroids.
Approximations from below, such as the ones we are about to describe, allow us on the other hand to prove sharp lifting results for a {\em restricted} class of matroids.

\begin{thmA}\label{thmD}
The following subcategories of $\FPastures$ are coreflective:
\begin{enumerate}
\item The subcategory $\cB$ consisting of the foundations of all binary matroids. (Explicitly, the objects of $\cB$ are $\F_1^\pm$ and $\F_2$.)
\item The subcategory $\cT$ consisting of the foundations of all ternary matroids. (Explicitly, the objects of $\cT$ are all pastures of the form $P_1 \otimes \cdots \otimes P_k$ with $P_i \in \{ \F_3, \D, \H, \U \}$.)
\item The subcategory $\cW$ consisting of the foundations of all matroids without large uniform minors. (Explicitly, the objects of $\cW$ are all pastures of the form $P_1 \otimes \cdots \otimes P_k$ with $P_i \in \{ \F_2, \F_3, \D, \H, \U \}$.)
\end{enumerate}
Moreover, in each case the corresponding lift $\cL_{\cD} P$ of a pasture $P$ can be explicitly described.
\end{thmA}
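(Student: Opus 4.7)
My strategy is to build on the classification from \cite{Baker-Lorscheid20} of foundations of binary, ternary, and WLUM matroids, which yields the explicit descriptions of the objects of $\cB$, $\cT$, $\cW$ given in the statement and, crucially, implies that each of these subcategories is closed under tensor products. With this in hand, constructing a right adjoint to the inclusion reduces to producing, for each finitary pasture $P$, a universal object $\cL_\cD(P) \in \cD$ equipped with a morphism to $P$.

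The binary case is straightforward: set $\cL_\cB(P) = \F_2$ if $1+1 \in N_P$, and $\cL_\cB(P) = \Funpm$ otherwise, together with the obvious morphism to $P$. The universal property is immediate, since $\Funpm$ is initial in $\Pastures$ and $\F_2$ admits a (necessarily unique) morphism to $P$ precisely when $1+1 \in N_P$; since $\cB$ has only these two objects, no further verification is needed.

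For $\cT$ and $\cW$ the idea is to exploit the fact that each elementary pasture $\F_2, \F_3, \D, \H, \U$ is finitely presented and hence represents an explicit functor from $\Pastures$ to sets: the existence of morphisms $\F_2 \to P$ and $\F_3 \to P$ is controlled by the nullity of $1+1$ and $1+1+1$, respectively, while $\Hom(\D,P)$, $\Hom(\H,P)$, and $\Hom(\U,P)$ are identified with the sets of fundamental pairs in $P$ of diagonal type $(d,d)$ with $d+d=1$, of hexagonal type with $h^3=-1$ and $h+h^{-1}=1$, and of general type, respectively. Because the tensor product is the coproduct in $\Pastures$, morphisms out of any object of $\cT$ or $\cW$ decompose into tuples of morphisms from the elementary factors, so the universal property of $\cL_\cD(P)$ will reduce to matching these hom-sets factor by factor.

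We construct $\cL_\cD(P)$ by partitioning the fundamental pairs of $P$ into hexagons under the $D_3$-action, classifying each hexagon according to its stabilizer subgroup (which determines the type of elementary pasture it ``sees''), and taking a tensor product of one copy of the corresponding elementary pasture for each hexagon, together with copies of $\F_2$ and/or $\F_3$ whenever the associated nullity relations hold in $P$. The finiteness assumption on $P$ ensures that this tensor product is finite and hence lies in $\cD$, and the canonical maps from the elementary factors to $P$ assemble (via the coproduct property of the tensor product) into a structure morphism $\cL_\cD(P) \to P$. I expect the main obstacle to be verifying the universal property in the $\cT$ and $\cW$ cases: by the coproduct structure this reduces to showing that morphisms from each elementary pasture to $\cL_\cD(P)$ correspond bijectively to the analogous morphisms to $P$, which in turn requires a careful case analysis demonstrating that our hexagon-by-stabilizer bookkeeping enumerates all classifying maps exactly once, without conflating the degenerate hexagons that could a priori factor through more than one elementary pasture.
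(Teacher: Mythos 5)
Your proposal follows essentially the same route as the paper: define $\cL_\cB$ by the $1+1$ dichotomy for the binary case, and for $\cT$ and $\cW$ partition $P^\pair$ into hexagons via the $D_3$-action, classify hexagons by their stabilizers, assign to each hexagon the matching elementary pasture, take the tensor product, and then verify the universal property factor-by-factor using the coproduct property of $\otimes$. This is precisely the strategy the paper carries out via the hexagon lift $\Lift_\Xi$ (shown to be $\U$, $\D$, $\H$, or $\F_3$ according to the type of $\Xi$), the definition $\Lift_\ternary P = \bigotimes_\Xi \Lift_\Xi$, and the key lemma that $\lambda_P^\pair$ is a bijection, which is exactly the bookkeeping you anticipate needing.

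One small imprecision: your phrasing ``together with copies of $\F_2$ and/or $\F_3$ whenever the associated nullity relations hold'' suggests tacking on an $\F_3$ factor separately when $1+1+1 \in N_P$. In the paper's construction, $\F_3$ already appears as $\Lift_\Xi$ for the (unique) hexagon of ternary type, which exists precisely when $1+1+1 \in N_P$, so a separate $\F_3$ factor would be redundant (harmless up to isomorphism since $\F_3 \otimes \F_3 \simeq \F_3$, but it muddies the bookkeeping in the universal property argument). Only $\F_2$ is added as a genuinely separate factor, and only in the $\cW$ case when $-1 = 1$ in $P$. Otherwise your plan matches the paper.
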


Part (1) of \autoref{thmD} will be proved in \autoref{prop: universal property of binary lifts}, part (2) in \autoref{prop: universal property of ternary lifts}, and part (3) in \autoref{prop: universal property of WLUM-lifts}.

\medskip

Such `abstract nonsense' has useful concrete consequences. For example:

\begin{enumerate}
\item The binary lift $\cL_{\cB} \S$ of the sign hyperfield $\S$ is the regular partial field $\F_1^\pm$. In particular, we get a simple `explanation' for the celebrated fact that every binary orientable matroid is regular.
\item The ternary lift $\cL_{\cT} \S$ of the sign hyperfield is the dyadic partial field $\D$. In other words, every reorientation class of a ternary orientable matroid lifts uniquely to a rescaling class of dyadic representations. As $\cL_{\cW} \S$ is also isomorphic to $\D$, the same holds more generally for orientable matroids without large uniform minors. 
(This is the ``Generalized Lee--Scobee Theorem'' from \cite{Baker-Lorscheid20}.)
\item The ternary lift $\cL_{\cT} \F_4$ of the finite field of order $4$ is isomorphic to the hexagonal partial field $\H$. In other words, every quarternary representation of a ternary matroid $M$ lifts uniquely to a hexagonal representation, up to rescaling equivalence. 
\end{enumerate}

Numerous other concrete examples are given in \autoref{thm: unique lifts of representations of ternary matroids} and \autoref{thm: unique lifts of representations of matroids wlum} below.

\subsection*{Products of rescaling class spaces}

As another application of our lifting techniques, combining the definition of a coreflection, the fact that the foundation represents the functor $\cX_M(\cdot)$, and the universal property of products yields in a formal way:

\begin{cor*}
If $\cD$ is a coreflective subcategory of $\Pastures$ or $\FPastures$ then for every matroid $M$ with $F_M \in \cD$ and every triple of pastures $(P,P_1,P_2)$ with $\cL_{\cD} P \cong \cL_{\cD} (P_1 \times P_2)$, there is a natural bijection between $\cX_M(P)$ and $\cX_M(P_1) \times \cX_M(P_2)$.
\end{cor*}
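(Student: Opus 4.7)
The plan is to string together three categorical facts into a single chain of natural bijections. The first step is to invoke the representability of $\cX_M(\cdot)$ recalled in the ``Matroids over pastures'' subsection: for every pasture $Q$ there is a natural bijection $\cX_M(Q) \cong \Hom(F_M, Q)$. The second step is to apply the universal property of the coreflection $\cL_\cD$; since $F_M \in \cD$ by hypothesis, the defining adjunction yields a natural bijection $\Hom(F_M, \cL_\cD Q) \cong \Hom(F_M, Q)$ (i.e., any morphism from an object of $\cD$ factors uniquely through $\lambda_Q : \cL_\cD Q \to Q$). The third step is to invoke the universal property of the categorical product $P_1 \times P_2$ in $\Pastures$ (or in $\FPastures$), which gives $\Hom(F_M, P_1 \times P_2) \cong \Hom(F_M, P_1) \times \Hom(F_M, P_2)$.

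Combining these three ingredients with the given isomorphism $\cL_\cD P \cong \cL_\cD(P_1 \times P_2)$, the desired bijection arises as the composite
\begin{align*}
 \cX_M(P) \;&\cong\; \Hom(F_M, P) \;\cong\; \Hom(F_M, \cL_\cD P) \;\cong\; \Hom(F_M, \cL_\cD(P_1 \times P_2)) \\
 \;&\cong\; \Hom(F_M, P_1 \times P_2) \;\cong\; \Hom(F_M, P_1) \times \Hom(F_M, P_2) \;\cong\; \cX_M(P_1) \times \cX_M(P_2).
\end{align*}
Since each of the constituent bijections is natural in the pasture arguments $P$, $P_1$, $P_2$, so is the composite.

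I expect essentially no obstacle to this argument: the substantive work -- namely the representability of $\cX_M(\cdot)$ by the foundation $F_M$, the existence of the coreflection $\cL_\cD$, and the existence of categorical products in $\Pastures$ and $\FPastures$ -- has already been discharged in earlier sections of the paper. This is precisely why the statement is advertised as following ``in a formal way'' from the preceding theory. The only care required is bookkeeping of naturality at each step, which is automatic from the adjunctions and universal properties invoked.
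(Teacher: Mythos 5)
Your proposal is correct and reconstructs precisely the argument the paper has in mind: representability of $\cX_M(\cdot)$ by $F_M$, the coreflection adjunction (applied twice, once for $P$ and once for $P_1 \times P_2$, both valid since $F_M \in \cD$), and the universal property of the product, chained together with the given isomorphism $\cL_\cD P \cong \cL_\cD(P_1 \times P_2)$. This matches the paper's stated derivation, which is advertised as following ``in a formal way'' from exactly these three ingredients and is not spelled out further.
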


For example, since $\cL_{\cT} \F_8$ and $\cL_{\cT}(\F_4 \times \F_5)$ are both isomorphic to the near-regular partial field $\U$, we find that 
 \[
  \cX_M(\F_8) \ = \ \cX_M(\F_4) \times \cX_M(\F_5)
 \]
 for every ternary matroid $M$.
(As mentioned earlier, there is a similar bijection whenever $q,p_1,p_2$ are prime powers with $3 \nmid q$ and $q-2 = (p_1 - 2)(p_2 - 2)$.)

\medskip

Similarly, since the $\GRS$-lift $\cL_{\cG}(\F_4 \times \F_5)$ is isomorphic to $\G = \cL_{\cG}(\G)$, we find that 
 \[
  \cX_M(\G) \ = \ \cX_M(\F_4) \times \cX_M(\F_5)
 \]
 for every (not necessarily ternary) matroid $M$.

\subsection*{Constructing the coreflections}

To conclude our introduction to the ideas contained in this paper, we give a brief outline of how the coreflections onto $\cG,\cB,\cT,\cW$ are defined.

\subsubsection*{Approximation from above}

Roughly speaking, the $\GRS$-lift of a pasture is defined by taking the same fundamental elements, with the same additive relations, but only including $2$-term and $3$-term multiplicative relations rather than all multiplicative relations. 

\medskip

More precisely, if $P$ is a pasture, its {\em $\GRS$-lift} is defined to be 
\[
\cL_{\grs}(P) := \past{\F_1^\pm \gen{t_a \; | \; a \in P^\fund}}S,
\]
where $S$ consists of the following relations:
 \begin{enumerate}[label={($\grs$\arabic*)}]
\item $1+1$, if $1+1 \in N_P$.
\item $t_a t_{a^{-1}}$ for $a \in P^\fund$.
\item $t_a + t_b - 1$ whenever $a+b-1 \in N_P$.
\item $t_a t_b t_c + 1$ whenever $a+b^{-1}-1 \in N_P$ and $abc + 1 \in N_P$.
\item $t_a t_b t_c - 1$ whenever $a,b,c \in P^\fund$ and $abc - 1 \in N_P$.
\end{enumerate}

The canonical morphism $\lambda_P : \cL_{\grs}(P) \to P$ sends $t_a$ to $a$, and induces a bijection on fundamental elements.

\medskip

We denote by $\cG$ the set of all pastures of the form $\cL_{\grs}(P)$ for some pasture $P$.
It follows from the results of \cite{Baker-Lorscheid20} that the foundation of every matroid belongs to $\cG$, 
and it's fairly straightforward to prove that $\cL_{\grs}(P)$ defines a coreflection from $\Pastures$ onto $\cG$.

\subsubsection*{Approximation from below}

The coreflection onto the subcategory $\cB = \{ \F_1^\pm, \F_2 \}$ of foundations of binary matroids is defined by setting $\cL_{\cB}(P) = \F_2$ if $1+1 \in N_P$ and $\cL_{\cB}(P) = \F_1^\pm$ otherwise.

To define the coreflections onto $\cT,\cW$, we use {\em fundamental pairs} rather than {\em fundamental elements} (as in the definition of the $\GRS$-lift).
For simplicity, we only consider the subcategory $\cD = \cT$ of ternary matroids here; the case $\cD = \cW$ is similar.

More precisely, if $P$ is a pasture, its {\em ternary lift} is defined to be
\[
\cL_{\cT}(P) := \past{\F_1^\pm \gen{t_{a,b} \; | \; (a,b) \in P^\pair}}S,
\]
where $S$ consists of the following relations:
\begin{enumerate}[label={($\ternary$\arabic*)}]
\item $t_{a,b}+t_{b,a}=1$.
\item $t_{a,b}\cdot t_{a^{-1},-a^{-1}b}=1$.
\item $t_{a,b}\cdot t_{-a^{-1}b,a^{-1}}\cdot t_{b^{-1},-ab^{-1}} =-1$.
\end{enumerate}

The canonical morphism $\lambda_P : \cL_{\cT}(P) \to P$ sends $t_{a,b}$ to $a$, and induces a bijection on fundamental pairs.

The ternary lift of $P$ decomposes as a tensor product of $P_\Xi$ over all hexagons $\Xi$ of $P$, where $P_\Xi \in \{ \F_3, \D, \H, \U \}$ depends only on the ``type'' of $\Xi$.
This is used to show that the set $\cT$ of all pastures of the form $\cL_{\cT}(P)$ for some $P \in \FPastures$ consists of all pastures of the form $P_1 \otimes \cdots \otimes P_k$ with $P_i \in \{ \F_3, \D, \H, \U \}$.
Using the classification of ternary foundations from \cite{Baker-Lorscheid20}, $\cT$ is precisely the set of foundations of ternary matroids.

It is once again fairly straightforward to prove that $\cL_{\cT}(P)$ defines a coreflection from $\FPastures$ onto $\cT$.

\subsection*{Content overview}

In \autoref{section: Background}, we present background material on pastures and foundations of matroids which is needed for what follows. 
In \autoref{section: GRS lift}, we first explore the coreflective hull of $\Foundations$ in $\Pastures$, which is of limited utility at the moment since it is rather difficult to compute. We then define the $\GRS$-lift of a pasture $P$ and establish its basic properties. The $\GRS$-lift is more explicit and easier to compute, but less precise than the lift to the coreflective hull. 
We also provide a comparison to the Pendavingh--van Zwam lift of a partial field and prove the Pendavingh--van Zwam idempotence conjecture. Finally, we define and establish the basic properties of the binary lift, which is too elementary to be truly useful but which provides a simple example of ``approximation from below''.
In \autoref{section: Hexagons}, we study the hexagons of a pasture $P$ in preparation for the definition of the ternary and $\WLUM$-lifts of a pasture $P$, which are presented in \autoref{section: Ternary lifts}. (These are more sophisticated and more interesting approximations from below.)
Applications to rescaling classes of matroids over various particular pastures are given in \autoref{section: Applications}. 


\subsection*{Acknowledgements}

We thank Rudi Pendavingh for inspiring conversations and Don Zagier for his suggestions on \autoref{rem: Zagier}. The first author was supported by a Simons Foundation Collaboration Grant and the second author was supported by a Marie Sk\l odowska-Curie Individual Fellowship.


\section{Background}
\label{section: Background}

In this section, we explain some notions from the introduction in more detail; also see \cite{Baker-Lorscheid18, Baker-Lorscheid20}.

\subsection{Algebras and quotients}

Let $P$ be a pasture with null set $N_P$ and $\{x_i\}_{i\in I}$ a set of indeterminates. The \emph{free $P$-algebra in $\{x_i\}$} is the pasture $P\gen{x_i\mid i\in I}$ whose unit group is $P\gen{x_i\mid i\in I}^\times=P^\times\times\gen{x_i\mid i\in I}$, where $\gen{x_i\mid i\in I}$ is the (multiplicatively written) free abelian group generated by the symbols $x_i$, and whose null set is
\[
 N_{P\gen{x_i\mid i \in I}} \ = \ \big\{ da+db+dc \, \big| \, d\in \gen{x_i\mid i\in I}, \, a+b+c\in N_P \big\},
\]
where $da$ stands for $(a,d)\in P\gen{x_i\mid i\in I}^\times$ if $a\neq0$ and for $0$ if $a=0$. This pasture comes with a canonical morphism $P\to P\gen{x_i\mid i\in I}$ of pastures that sends $a$ to $1a$. If $\{x_i\}=\{x_1,\dotsc,x_s\}$ is finite, then we usually write $P\gen{x_1,\dotsc,x_s}$ for $P\gen{x_i\mid i\in I}$.

Let $S\subset \Sym_3(P)$ be a set of elements of the form $a+b+c$ with $ab\neq 0$. We define the \emph{quotient $\past PS$ of $P$ by $S$} as the following pasture. Let $\tilde N_{\past PS}$ be the smallest subset of $\Sym_3(P)$ that is closed under property \ref{P2} and that contains $N_P$ and $S$. Since all elements $a+b+c$ in $S$ have at least two nonzero terms by assumption, $\tilde N_{\past PS}$ also satisfies \ref{P1}. Axiom \ref{P3} leads to the following quotient construction for $P^\times$.

We define the unit group $(\past PS)^\times$ of $\past PS$ as the quotient of the group $P^\times$ by the subgroup generated by all elements $a$ for which $a-1+0\in \tilde N_{\past PS}$. The underlying monoid of $\past PS$ is, by definition, $\{0\}\cup(\past PS)^\times$, and it comes with a surjection $\pi:P\to \past PS$ of monoids. We denote the image of $a\in P$ by $\bar a=\pi(a)$, and define the null set of $\past PS$ as the subset
\[
 N_{\past PS} \ = \ \big\{ \bar a+\bar b+\bar c \, \big| \, a+b+c\in \tilde N_{\past PS} \big\}
\]
of $\Sym_3(\past PS)$. The quotient $\past PS$ of $P$ by $S$ comes with a canonical map $P\to\past PS$ that sends $a$ to $\bar a$ and is a morphism of pastures.

If $S\subset\Sym_3(P\gen{x_i\mid i\in I})$ is a subset of relations of the form $a+b+c$ with $ab\neq 0$, then the composition of the canonical morphisms for the free algebra and for the quotient yields a canonical morphism 
\[
 \pi: \ P \ \longrightarrow \ P\gen{x_1,\dots,x_s} \ \longrightarrow \ \past{P\gen{x_i\mid i\in I}}S.
\]
We denote by $\pi_0:\{x_i\mid i\in I\}\to \past{P\gen{x_i\mid i\in I}}S$ the map that sends $x_i$ to $\bar x_i$.

\begin{prop}\label{prop: universal property of algebras and quotients}
 Let $P$ be a pasture, $\{x_i\}_{i\in I}$ an indexed set and $S\subset\Sym_3(P\gen{x_i\mid i\in I})$ a subset of elements of the form $a+b+c$ with $ab\neq 0$. Let $f:P\to Q$ be a morphism of pastures and $f_0:\{x_i\}\to Q^\times$ a map with the property that $a\prod x_i^{\alpha_i}+b\prod x_i^{\beta_i}+c\prod x_i^{\gamma_i}\in S$ with $a,b,c\in P$ and $(\alpha_i),(\beta_i),(\gamma_i)\in\bigoplus_{i\in I} \Z$ implies that
 \[\textstyle
  f(a)\prod f_0(x_i)^{\alpha_i}+ f(b)\prod f_0(x_i)^{\beta_i}+f(c)\prod f_0(x_i)^{\gamma_i}\in N_{Q}.
 \]
 Then there is a unique morphism $\hat f:\past{P\gen{x_1,\dotsc,x_s}}S\to Q$ such that the diagrams
 \[
  \begin{tikzcd}
   P \ar[r,"f"] \ar[d,"\pi"'] & Q \\
   \past{P\gen{x_i\mid i\in I}}S \ar[ur,"\hat f"']
  \end{tikzcd}
  \qquad \text{and} \qquad
  \begin{tikzcd}
   \{x_i\}_{i\in I} \ar[r,"f_0"] \ar[d,"\pi_0"'] & Q \\
   \past{P\gen{x_i\mid i\in I}}S \ar[ur,"\hat f"']
  \end{tikzcd}
 \]
 commute.
\end{prop}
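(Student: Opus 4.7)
\medskip

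The plan is to prove the proposition in two stages, first handling the free algebra $P\gen{x_i\mid i\in I}$ and then the quotient $\past{P\gen{x_i\mid i\in I}}S$, mirroring the two-step construction in the preceding paragraphs.

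\medskip

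\textbf{Step 1 (universal property of the free algebra).} I would first construct a morphism $\tilde f:P\gen{x_i\mid i\in I}\to Q$ extending $f$ and realizing $f_0$. On units I set $\tilde f(a\prod x_i^{\alpha_i})=f(a)\prod f_0(x_i)^{\alpha_i}$ and $\tilde f(0)=0$. Multiplicativity and preservation of $0,1$ are immediate from the product structure on $P\gen{x_i\mid i\in I}^\times = P^\times\times\gen{x_i\mid i\in I}$. To see that $\tilde f$ is a morphism of pastures, take a typical null element $da+db+dc$ with $a+b+c\in N_P$ and $d\in\gen{x_i\mid i\in I}$. Since $f$ is a pasture morphism, $f(a)+f(b)+f(c)\in N_Q$, and applying axiom \ref{P2} with the unit $\tilde f(d)\in Q^\times$ yields $\tilde f(da)+\tilde f(db)+\tilde f(dc)\in N_Q$. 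Uniqueness is clear because the values of $\tilde f$ on $P$ and on each $x_i$ determine it on the entire unit group.

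\medskip

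\textbf{Step 2 (passage to the quotient).} Next I would show that $\tilde f$ factors through $\pi:P\gen{x_i\mid i\in I}\to\past{P\gen{x_i\mid i\in I}}S$. The key observation is that $\tilde f$ sends the enlarged nullset $\tilde N_{\past{P\gen{x_i\mid i\in I}}S}$ into $N_Q$. Indeed, $\tilde N_{\past{P\gen{x_i\mid i\in I}}S}$ is by construction the smallest subset of $\Sym_3(P\gen{x_i\mid i\in I})$ closed under \ref{P2} and containing $N_{P\gen{x_i\mid i\in I}}\cup S$. Step~1 already shows the image of $N_{P\gen{x_i\mid i\in I}}$ lies in $N_Q$, and the hypothesis on $(f,f_0)$ is precisely that the image of $S$ lies in $N_Q$; closure under \ref{P2} in $Q$ then propagates this to all of $\tilde N$. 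In particular, whenever $a-1+0\in\tilde N$, we get $\tilde f(a)-1+0\in N_Q$, which by \ref{P3} (uniqueness of $-1$) forces $\tilde f(a)=1$. Thus $\tilde f$ vanishes on the subgroup of $P\gen{x_i\mid i\in I}^\times$ generated by such $a$, and therefore descends to a well-defined multiplicative map $\hat f:\past{P\gen{x_i\mid i\in I}}S\to Q$ with $\hat f\circ\pi=\tilde f$. Since $N_{\past{P\gen{x_i\mid i\in I}}S}$ is by definition the image of $\tilde N$ under $\pi$, and $\tilde f$ sends $\tilde N$ into $N_Q$, the map $\hat f$ is a morphism of pastures, and the two triangles in the proposition commute by construction.

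\medskip

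\textbf{Uniqueness and main obstacle.} Uniqueness is immediate: any morphism $\hat f$ satisfying both triangles must send $\bar a$ to $f(a)$ for $a\in P$ and $\bar x_i$ to $f_0(x_i)$, and these values determine $\hat f$ on all units of $\past{P\gen{x_i\mid i\in I}}S$, hence on the whole pasture. The only delicate point — and the one I would treat most carefully — is the inductive verification in Step~2 that $\tilde f(\tilde N)\subset N_Q$. It is not hard, but it depends on the precise way the paper defines $\tilde N$ as the closure of $N_{P\gen{x_i\mid i\in I}}\cup S$ under \ref{P2}, and on the fact that the subgroup used to form the quotient group $(\past{P\gen{x_i\mid i\in I}}S)^\times$ is exactly generated by those $a$ with $a-1\in\tilde N$; invoking \ref{P3} in $Q$ is what converts this relational information into the equation $\tilde f(a)=1$, which is what makes the factorization legal.
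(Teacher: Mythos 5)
Your proof is correct and complete. The paper itself does not give a direct argument here; it merely cites Proposition~2.6 of Baker--Lorscheid (2020) for the case of finitely many indeterminates and remarks that the general case is analogous. Your two-step strategy --- first establishing the universal property of the free algebra $P\gen{x_i\mid i\in I}$, then showing that $\tilde f$ descends to the quotient because it sends $\tilde N_{\past{P\gen{x_i}}S}$ into $N_Q$ --- is precisely the natural approach, and almost certainly mirrors the cited proof. You also correctly isolate the delicate point: the enlarged nullset $\tilde N$ is the closure of $N_{P\gen{x_i}}\cup S$ under \ref{P2}, $\tilde f$ respects both generating pieces (Step~1 and the hypothesis, respectively), and \ref{P2} in $Q$ propagates this to the closure. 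The only place you elide a half-sentence is the passage from $\tilde f(a) - 1 + 0 \in N_Q$ to $\tilde f(a)=1$: strictly speaking, one first rescales by $-1$ (or by $\tilde f(a)^{-1}$) to obtain a relation of the form $1 + (\cdot) + 0 \in N_Q$, and only then does the uniqueness clause in \ref{P3} apply. This is a cosmetic point and does not affect the validity of the argument. Everything else --- well-definedness of $\hat f$ via triviality on the subgroup defining the quotient, preservation of nullsets via the description of $N_{\past{P\gen{x_i}}S}$ as the image of $\tilde N$ under $\pi$, and uniqueness from multiplicativity on generators --- is handled correctly.
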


\begin{proof}
 This is proven in \cite[Prop.\ 2.6]{Baker-Lorscheid20} for finite $\{x_i\}_{i\in I}=\{x_1,\dotsc,x_s\}$. The general case is analogous.
\end{proof}


\subsection{Examples}

The \emph{regular partial field} is the pasture
\[
 \Funpm \ = \ \{0,1,-1\} \qquad \text{with nullset} \qquad N_{\Funpm} \ = \ \{1-1\},
\]
which is an initial pasture, i.e.\ there is a unique morphism $\Funpm\to P$ for every pasture $P$. In particular, every other pasture $P$ is an $\Funpm$-algebra and $P\simeq \pastgen\Funpm{x_i\mid i\in I}{S}$ for some $I$ and $S\subset\Sym_3\Big(\Funpm\gen{x_i\mid i\in I}\Big)$. The \emph{Krasner hyperfield} is the pasture
\[
 \K \ = \ \past\Funpm{\{1+1,1+1+1\}}, 
\]
whose underlying monoid is $\{0,1\}$ and whose nullset is $N_{\K}=\{1+1,1+1+1\}$. It is a terminal pasture, i.e.\ there is a unique morphism $t_P:P\to\K$ for every pasture $P$, which we call the \emph{terminal map}.

\subsubsection*{Fields as pastures}

We can associate with a field $K$ the following pasture $P$: as multiplicative monoids, we define $P=K$; the nullset of $P$ consists of all $a+b-c\in\Sym_3(P)$ such that $a+b=c$ in $K$. Note that a map $f:K_1\to K_2$ between fields is a field homomorphism if and only if it is a morphism between the associated pastures $f:P_1\to P_2$.

For example, we have
\[
 \F_2 \ = \ \past\Funpm{\{1+1\}} \qquad \text{and} \qquad \F_3 \ = \ \past\Funpm{\{1+1+1\}}.
\]

\subsubsection*{Partial fields as pastures}

Following \cite{Pendavingh-vanZwam10b} (see also \cite{Baker-Lorscheid18,Pendavingh-vanZwam10a,Semple-Whittle96}), a partial field is given by a pair $(G,R)$ of a ring $R$ together with a subgroup $G$ of the unit group $R^\times$ that contains $-1$. The associated pasture is $P=G\cup\{0\}$, as a pointed monoid, together with the nullset $N_P$ that consists of all elements $a+b-c\in\Sym_3(P)$ such that $a+b=c$ in $R$. Let $(G_1,R_1)$ and $(G_2,R_2)$ be partial fields with respective associated pastures $P_1$ and $P_2$. Then a map $f:G_1\to G_2$ between partial fields is homomorphism if and only if the rule $0\mapsto 0$ extends it a morphism between the associated pastures $f:P_1\to P_2$.

Let $(G,R)$ be a partial field and $P$ the associated pasture. The universal ring of $(G,R)$ in the sense of \cite[section 4.2]{Pendavingh-vanZwam10a} can be expressed as $R_{(G,R)}=\Z[P^\times]/\gen{N_P}$ where we identify $0\in P$ with the zero in the group semiring $\Z[P^\times]$.

Examples of partial fields are the regular partial field $\Funpm$, as well as
\begin{align*}
 & \text{the \emph{near-regular partial field}} & \U \ &= \ \pastgen\Funpm{x,y}{x+y-1}; \\
 & \text{the \emph{dyadic partial field}}       & \D \ &= \ \pastgen\Funpm{z}{z+z-1}; \\
 & \text{the \emph{hexagonal partial field}}    & \H \ &= \ \pastgen\Funpm{z}{z^3 + 1, \; z+z^{-1}-1}; \\
 & \text{the \emph{golden ratio partial field}} & \G \ &= \ \pastgen\Funpm{z}{z^2+z-1}.
\end{align*}

\begin{rem}\label{rem: universal ring of a pasture}
 We can define for every pasture $P$ a \emph{universal ring} $R_P=\Z[P^\times]/\gen{N_P}$, which comes with a multiplicative map $P\to R_P$. This lets us characterize pastures that come from partial fields: $P$ is a pasture associated with a partial field $(G,P)$ if and only if $P\to R_P$ is injective and if $N_P=\gen{N_P}\cap\Sym_3(P)$, i.e.\ $N_P$ contains every element $a+b+c\in R_P$ of the ideal $\gen{N_P}$ where $a,b,c\in P$. In this case, the identification $P^\times=G$ defines an isomorphism of partial fields $(P^\times,R_P)\to (G,R)$. By abuse of terminology, we will say in the following that a pasture $P$ is a partial field if $P\to R_P$ is injective and $N_P=\gen{N_P}\cap\Sym_3(P)$.
\end{rem}

\begin{ex}
The pasture $P=\pastgen\Funpm{x,y}{x+y-1,x^3+xy+1}$ embeds into its universal ring $R_P=\Z[x,y]/\gen{x+y-1,x^3+xy+1}\simeq\Z[x]/\gen{x^3-x^2+x+1}$, but the relation
 \[
  x^3-x^2+y \ = \ (x+y-1)-x\cdot(x+y-1)+(x^3+xy+1) \ = \ 0
 \]
 does not hold in $P$, which shows that $P$ is not a partial field.
\end{ex}


\subsubsection*{Hyperfields as pastures}
Hyperfields (introduced by Krasner in \cite{Krasner56}) are, roughly speaking, like fields except that addition is allowed to be multi-valued (see also \cite{Baker-Bowler19,Baker-Lorscheid18}).
A hyperfield $K$ can be identified with the pasture $P$ that equals $K$ as multiplicative monoid and whose nullset consists of all $a+b-c\in\Sym_3(P)$ such that $c\in a\hyperplus b$.

Examples of hyperfields are the Krasner hyperfield $\K$, as well as
\begin{align*}
 & \text{the \emph{sign hyperfield}}      & \S \ &= \ \past\Funpm{\{1+1-1\}}; \\
 & \text{the \emph{weak sign hyperfield}} & \W \ &= \ \past\Funpm{\{1+1+1,\ 1+1-1\}}.
\end{align*}


\subsection{Products and tensor products}
\label{sec:prodandtensorsprod}

Let $\{P_i\}_{i\in I}$ be a family of pastures. The \emph{product of $\{P_i\}$} is defined as follows. For empty $I$, we set $\prod_{i\in I} P_i=\K$. If $I$ is non-empty, then we define the pointed monoid
\[
 \prod_{i\in I} P_i \ = \ \{0\} \cup \bigg(\prod_{i\in I} P_i^\times\bigg)
\]
where $\prod P_i^\times$ is the Cartesian product of the abelian groups $P_i^\times$ and $0\cdot(a_i)=0$ for all $(a_i)\in \prod P_i^\times$. This monoid comes together with \emph{canonical projections} $\pi_j:\prod P_i\to P_j$ that are defined by $\pi_j\Big((a_i)\Big)=a_j$ for $(a_i)\in \prod P_i^\times$ and $\pi_j(0)=0$. The nullset of $\prod P_i$ is
\[\textstyle
 N_{\prod P_i} \ = \ \Big\{ a+b+c\in \Sym(\prod P_i) \, \Big| \, \pi_j(a)+\pi_j(b)+\pi_j(c)\in N_{P_j} \text{ for all }j\in I \Big\}.
\]
Note that the canonical projections $\pi_j:\prod P_i\to P_j$ are morphisms of pastures. The product $\prod P_i$ satisfies the following universal property (\cite[Lemma 2.2]{Creech21}):

\begin{lemma}\label{lemma: universal property of products}
 Let $\{P_i\}_{i\in I}$ be a family of pastures. For every family $\{\varphi_i:Q\to P_i\}_{i\in I}$ of pasture morphisms, there is a unique pasture morphism $\Phi:Q\to\prod P_i$ such that the diagram
 \[
  \begin{tikzcd}[column sep=80pt, row sep=20pt]
   Q \ar[r,"\Phi"] \ar[dr,"\varphi_j"'] & \prod P_i \ar[d,"\pi_j"] \\
                                        & P_j
  \end{tikzcd}
 \]
 commutes for every $j\in I$.
\end{lemma}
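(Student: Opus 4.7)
The plan is to use the standard categorical recipe for products: first define $\Phi$ in the only possible way, then verify it is a pasture morphism, then read off uniqueness.

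First, I would define $\Phi\colon Q\to\prod P_i$ on the underlying pointed monoid as follows. Set $\Phi(0)=0$ and, for $q\in Q^\times$, set $\Phi(q)=\bigl(\varphi_i(q)\bigr)_{i\in I}\in\prod_{i\in I}P_i^\times$. Note that this is well defined because pasture morphisms send $P^\times$ to $P^\times$ (they are multiplicative and send $1$ to $1$). Commutativity of the triangles forces this definition, so uniqueness will come for free once we verify that this $\Phi$ is actually a morphism of pastures.

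Next I would verify that $\Phi$ is multiplicative, sends $0$ to $0$, and sends $1$ to $1$: multiplicativity on $Q^\times$ reduces to the fact that $\prod P_i^\times$ carries the componentwise group structure and each $\varphi_i$ is multiplicative; the case where one factor is $0$ is automatic since the zero in $\prod P_i$ is absorbing. The identity $\Phi(1)=(\varphi_i(1))_i=(1)_i$ holds because each $\varphi_i$ preserves $1$.

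The one substantive step is checking that $\Phi$ respects nullsets. Suppose $a+b+c\in N_Q$. By definition of $N_{\prod P_i}$, I need to show that $\pi_j(\Phi(a))+\pi_j(\Phi(b))+\pi_j(\Phi(c))\in N_{P_j}$ for every $j\in I$. By construction, $\pi_j\circ\Phi=\varphi_j$ on $Q^\times$ (and trivially on $\{0\}$, since both sides vanish there), so this reduces to the statement that $\varphi_j(a)+\varphi_j(b)+\varphi_j(c)\in N_{P_j}$, which holds because $\varphi_j$ is a morphism of pastures. For uniqueness, any morphism $\Phi'$ making the triangles commute satisfies $\pi_j(\Phi'(q))=\varphi_j(q)$ for all $j$, which on nonzero $q$ forces $\Phi'(q)=(\varphi_i(q))_i=\Phi(q)$, while $\Phi'(0)=0=\Phi(0)$ is automatic.

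There is no real obstacle here: the argument is purely formal and essentially identical to the construction of products in sets, groups, or rings; the only mildly non-trivial point is the case analysis around the absorbing element $0$ when checking multiplicativity and the equality $\pi_j\circ\Phi=\varphi_j$, but this is routine given axiom \ref{P1} and the convention that morphisms send $0$ to $0$.
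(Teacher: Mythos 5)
Your proof is correct and is the standard categorical argument for the universal property of a product. The paper itself does not supply a proof of this lemma (it simply cites Lemma~2.2 of Creech's thesis), so there is no in-text argument to compare against, but your reasoning is exactly what one expects: define $\Phi$ in the only way the commuting triangles allow, verify that it lands in the right underlying monoid (using that pasture morphisms carry units to units), check compatibility with the nullset via the componentwise definition of $N_{\prod P_i}$, and read off uniqueness. The one corner case you glide over is $I=\emptyset$, where $\prod_{i\in I} P_i = \K$ by convention and the statement reduces to $\K$ being a terminal object; your formula $\Phi(q)=(\varphi_i(q))_{i\in I}$ degenerates gracefully to the terminal map there, but it would be cleaner to say so explicitly.
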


\begin{lemma}\label{lemma: product of partial fields is a partial field}
 The product of partial fields is a partial field.
\end{lemma}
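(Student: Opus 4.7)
The plan is to verify the two conditions characterising partial fields spelled out in \autoref{rem: universal ring of a pasture}: for $P = \prod_{i \in I} P_i$ with each $P_i$ a partial field, I would show that (i) the canonical map $P \to R_P$ is injective and (ii) $N_P = \gen{N_P} \cap \Sym_3(P)$. Since each $P_i$ is a partial field, it embeds into its universal ring $R_i := R_{P_i}$, and $N_{P_i}$ is precisely $\gen{N_{P_i}} \cap \Sym_3(P_i)$. My strategy is to leverage these facts by routing everything through the product ring $\prod_i R_i$.

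First, I would construct a multiplicative map $\varphi : P \to \prod_i R_i$ by sending $0 \mapsto 0$ and $(a_i)_{i \in I} \mapsto (a_i)_{i \in I}$ on $P^\times = \prod_i P_i^\times \subseteq \prod_i R_i$. Viewing $\prod_i R_i$ as a pasture in the obvious way (sums are null iff they vanish in the ring), the definition of $N_P$ as the set of triples whose projections to each $P_j$ lie in $N_{P_j}$ guarantees that $\varphi$ is a morphism of pastures: if $a+b+c \in N_P$, then $\pi_j(a)+\pi_j(b)+\pi_j(c) \in N_{P_j}$, hence vanishes in $R_j$ for each $j$, so $\varphi(a)+\varphi(b)+\varphi(c)=0$ in $\prod_i R_i$. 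By the universal property of $R_P = \Z[P^\times]/\gen{N_P}$, this multiplicative, null-preserving map factors through a ring homomorphism $\psi : R_P \to \prod_i R_i$ with $\varphi = \psi \circ \iota_P$, where $\iota_P : P \to R_P$ is the canonical map.

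Condition (i) then follows immediately: since each $P_i \hookrightarrow R_i$ is injective, the map $\varphi$ is injective on $P^\times$ (distinct tuples differ in some coordinate), and it separates $0$ from $P^\times$; hence $\iota_P$ must also be injective. For condition (ii), suppose $a+b+c \in \Sym_3(P)$ lies in $\gen{N_P}$, i.e.\ vanishes in $R_P$. Applying $\psi$ gives $\varphi(a)+\varphi(b)+\varphi(c) = 0$ in $\prod_i R_i$, which means $\pi_j(a)+\pi_j(b)+\pi_j(c) = 0$ in $R_j$ for every $j \in I$. Because $P_j$ is a partial field, this forces $\pi_j(a)+\pi_j(b)+\pi_j(c) \in N_{P_j}$, and by the very definition of $N_P$ in \autoref{sec:prodandtensorsprod} we conclude $a+b+c \in N_P$, as desired.

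There is no serious obstacle; the only care required is bookkeeping around the universal property of $R_P$ (making sure a multiplicative, null-preserving monoid map factors through $R_P$ as a ring homomorphism, which is how $R_P$ is set up) and the fact that axioms \ref{P1}--\ref{P3} for $\prod_i P_i$ have been established (the element $-1$ required by \ref{P3} is the tuple of $-1$'s, whose uniqueness is inherited coordinatewise). The empty-product case $I = \emptyset$ gives $\prod P_i = \K$, which is not a partial field, so the statement implicitly assumes $I$ nonempty; otherwise the argument above is uniform in $|I|$.
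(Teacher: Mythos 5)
Your proof is correct, and it differs from the paper's in kind rather than just in detail: the paper disposes of the lemma by citing \cite[Lemma 2.17]{Pendavingh-vanZwam10b} and observing that the partial-field and pasture notions of product agree, whereas you give a self-contained argument internal to the paper's framework. Your argument is exactly the right one for this setting: you verify the two conditions of \autoref{rem: universal ring of a pasture} by building the monoid map $\varphi : P \to \prod_i R_{P_i}$, using the universal property of $\Z[P^\times]/\gen{N_P}$ to factor it as $\psi \circ \iota_P$, and then reading off injectivity of $\iota_P$ and the closure condition $N_P = \gen{N_P} \cap \Sym_3(P)$ coordinate by coordinate from the corresponding properties of each $P_j$. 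The one small wrinkle -- that ``the pasture associated to $\prod_i R_i$'' is not literally a pasture since $\prod_i R_i$ has non-invertible nonzero elements -- is harmless because you only ever need $\varphi$ to be a multiplicative, null-preserving monoid map into a ring, which is precisely what the universal property of $R_P$ consumes; your parenthetical acknowledges this. Your observation about the empty product is also correct: with the paper's convention $\prod_{i \in \emptyset} P_i = \K$, the lemma fails as literally stated (since $R_\K = 0$), so the intended reading is a nonempty family. What your proof buys over the paper's is a verification that does not rely on the external reference and that simultaneously handles infinite index sets without further argument; what the paper's citation buys is brevity and a pointer to the original partial-field literature.
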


\begin{proof}
 This follows from \cite[Lemma 2.17]{Pendavingh-vanZwam10b}, observing that the construction of the product of partial fields agrees with the construction of products of pastures.
\end{proof}

Let $\{P_i\}_{i\in I}$ be a family of pastures. For empty $I$, we set $\bigotimes_{i\in I} P_i=\Funpm$. If $I$ is non-empty, then we define the pointed monoid
\[
 \widehat P \ = \ \{0\} \cup \bigg(\bigoplus_{i\in I} P_i^\times\bigg)
\]
where $\bigoplus P_i^\times$ is the direct sum of the abelian groups $P_i^\times$ and $0\cdot(a_i)=0$ for all $(a_i)\in \bigoplus P_i^\times$. This monoid comes together with monoid injections $\hat\iota_j:P_j\to \widehat P$ that are defined by $\hat\iota(0)=0$ and $\hat\iota_j(a)=(a_i)$ with $a_j=a$ and $a_i=1$ for $i\neq j$ if $a\neq 0$. The \emph{tensor product of $\{P_i\}$} is defined as 
\[
 \bigotimes_{i\in I} P_i \ = \ \past{\widehat P}S
\]
where
\[
 S \ = \ \left\{ a+b+c\in \Sym(\widehat P) \, \left| \, \begin{array}{c}a+b+c \ = \ \hat\iota_j(a')+\hat\iota_j(b')+\hat\iota_j(c')\\ \text{for a $j\in I$ and $a'+b'+c'\in N_{P_j}$}\end{array} \right.\right\}.
\]
Note that the underlying monoid of $\bigotimes P_i$ is the quotient of $\widehat P$ by the equivalence relation generated by $\hat\iota_i(-1)\sim \hat\iota_j(-1)$ for all $i,j\in I$. The composition of $\hat\iota_j$ with the quotient map $\widehat P\to\bigotimes P_i$ defines the $j$-th canonical inclusion $\iota_j:P_j\to \bigotimes P_i$, which is a morphism of pastures. 

The tensor product $\bigotimes P_i$ satisfies the following universal property (\cite[Lemma 3.5]{Creech21}):

\begin{lemma}\label{lemma: universal property of tensor products}
 Let $\{P_i\}_{i\in I}$ be a family of pastures. For every family $\{\varphi_i:P_i\to Q\}_{i\in I}$ of pasture morphisms, there is a unique pasture morphism $\Phi:\bigotimes P_i\to Q$ such that the diagram
 \[
  \begin{tikzcd}[column sep=80pt, row sep=20pt]
   \bigotimes P_i \ar[r,"\Phi"] & Q \\
   P_j \ar[u,"\iota_j"] \ar[ur,"\varphi_j"'] 
  \end{tikzcd}
 \]
 commutes for every $j\in I$.
\end{lemma}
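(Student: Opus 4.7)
The plan is to build $\Phi$ in two stages, following the construction of $\bigotimes P_i$ as $\past{\widehat P}S$. First I will define an auxiliary monoid morphism $\widehat\Phi : \widehat P \to Q$ using the family $\{\varphi_i\}$, then I will verify that $\widehat\Phi$ sends the defining relations of $S$ to null elements of $Q$, which will allow me to descend $\widehat\Phi$ to the quotient and obtain $\Phi$. Uniqueness will follow by observing that the composites $\Phi \circ \iota_j$ determine $\Phi$ on a generating set of the underlying monoid of $\bigotimes P_i$.

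On the level of pointed monoids, an element of $\widehat P \setminus \{0\} = \bigoplus_{i\in I} P_i^\times$ is a tuple $(a_i)_{i\in I}$ with $a_i = 1$ for all but finitely many $i$. I would define $\widehat\Phi((a_i)) := \prod_{i\in I} \varphi_i(a_i)$, the product taken in $Q^\times$ (which is finite and hence well-defined), and $\widehat\Phi(0) := 0$. This is a morphism of pointed monoids by construction, and it satisfies $\widehat\Phi \circ \hat\iota_j = \varphi_j$ for every $j \in I$. Since each $\varphi_j$ sends $-1$ to $-1 \in Q$, one has $\widehat\Phi(\hat\iota_i(-1)) = \widehat\Phi(\hat\iota_j(-1)) = -1$ for all $i,j$, so $\widehat\Phi$ respects the equivalence relation $\hat\iota_i(-1) \sim \hat\iota_j(-1)$ that appears in the quotient.

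Next, to apply the universal property of the quotient (\autoref{prop: universal property of algebras and quotients}, in its natural extension from free algebras to arbitrary monoids), I need to verify that each relation in $S$ is sent to $N_Q$. A typical element of $S$ has the form $d\cdot\hat\iota_j(a') + d\cdot\hat\iota_j(b') + d\cdot\hat\iota_j(c')$ for some $j \in I$, some $d \in \widehat P^\times$, and some $a'+b'+c' \in N_{P_j}$ (after applying axiom \ref{P2} inside $\widehat P$). Applying $\widehat\Phi$ and using that $\widehat\Phi$ is multiplicative and that $\varphi_j(a')+\varphi_j(b')+\varphi_j(c') \in N_Q$ since $\varphi_j$ is a pasture morphism, we get
\[
  \widehat\Phi(d)\cdot\varphi_j(a') + \widehat\Phi(d)\cdot\varphi_j(b') + \widehat\Phi(d)\cdot\varphi_j(c') \ \in \ N_Q
\]
by \ref{P2} in $Q$. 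Hence $\widehat\Phi$ descends to a morphism of pastures $\Phi : \past{\widehat P}{S} = \bigotimes P_i \to Q$, and by construction $\Phi \circ \iota_j = \varphi_j$ for every $j$.

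For uniqueness, suppose $\Phi' : \bigotimes P_i \to Q$ is another pasture morphism with $\Phi' \circ \iota_j = \varphi_j$ for all $j$. The unit group $(\bigotimes P_i)^\times$ is generated as a monoid by the images $\iota_j(P_j^\times)$ for $j \in I$ (by the construction of $\widehat P$ as a direct sum), so $\Phi'$ is determined on a generating set, hence on all of $\bigotimes P_i$. The main subtle point in the whole argument—though not really an obstacle, just something that must be handled carefully—is checking that $\widehat\Phi$ respects the $S_3$-symmetry when descending to the null set and that the nullset of the quotient is exactly the one generated by $N_{\widehat P} \cup S$ under \ref{P2}; this is exactly what \autoref{prop: universal property of algebras and quotients} is designed to package, so invoking that proposition closes the argument.
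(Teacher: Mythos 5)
The paper does not actually supply a proof of this lemma: it is stated with the attribution ``(\cite[Lemma 3.5]{Creech21})'', so there is no internal argument to compare against. Your blind proof is therefore being judged on its own merits, and it is essentially the standard argument one would expect: build a monoid map $\widehat\Phi$ on $\widehat P$ from the family $\{\varphi_i\}$, check it kills the generating relations $S$, descend to the quotient, and deduce uniqueness from the fact that the $\iota_j(P_j^\times)$ generate $(\bigotimes P_i)^\times$. That outline is correct, and the individual verifications you carry out (finite supports, $\widehat\Phi \circ \hat\iota_j = \varphi_j$, compatibility with $\hat\iota_i(-1)\sim\hat\iota_j(-1)$, and preservation of the relations in $S$ via \ref{P2}) are all sound.

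The one place where the argument is genuinely thin is your appeal to \autoref{prop: universal property of algebras and quotients} ``in its natural extension from free algebras to arbitrary monoids.'' That proposition as stated governs $\past{P\gen{x_i}}{S}$ for $P$ a pasture; taking $\{x_i\}=\emptyset$ it does give the universal property of $\past{P}{S}$, but only when the ambient object $P$ is itself a pasture. The pointed monoid $\widehat P=\{0\}\cup\bigoplus P_i^\times$ is not literally a pasture, because it has several candidate $-1$'s (one per factor) and no designated nullset prior to the quotient, so the proposition does not apply verbatim. You flag this yourself, which is good, but to be airtight you would either need to (i) note that $\widehat P$ can be presented as a quotient of a free $\Funpm$-algebra, so that \autoref{prop: universal property of algebras and quotients} applies in two stages, or (ii) state and use the universal property of the bare quotient construction $\past{-}{S}$ for pointed monoids directly. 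This mirrors an informality already present in the paper's definition of $\bigotimes P_i = \past{\widehat P}{S}$, so the gap is not a fatal one, but it is the step your write-up should not have waved through.
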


\begin{rem}
 Note that the tensor product of (partial) fields is not necessarily a (partial) field. For example, none of 
 \[
  \F_2\otimes\F_3 \ \simeq \ \K, \qquad \F_2\otimes\D \qquad \text{and} \qquad \F_3\otimes\D 
 \]
 is a partial field. This is obvious for $\K$. In the latter two cases, assume that $\F_q\otimes\D$ (for $q=2,3$) occurs as a submonoid of a ring $R$ with $z+z=1$ and $1+1=0$ (if $q=2$) or $1+1=-1$ (if $q=3$). Then $z^{-1}=1+1\in\{0,-1\}$ in $R$, which contradicts the fact that $z^{-1}\notin\{0,-1\}$ in $\F_q\otimes\D$.
\end{rem}


\subsection{Matroid representations over pastures}

Given two subsets $I$ and $J$ of $E$, we denote by $I-J=\{i\in I\mid i\notin J\}$ the complement of $J$ in $I$. For an ordered tuple $\bJ=(j_1,\dotsc,j_s)$ in $E^s$, we denote by $|\bJ|$ the subset $\{j_1,\dotsc,j_s\}$ of $E$. Given $k$ elements $e_{1},\dotsc,e_k\in E$, we denote by $\bJ e_{1}\dotsb e_{k}$ the $s+k$-tuple $(j_1,\dotsc,j_{s},e_1,\dotsc,e_k)\in E^{s+k}$. For a subset $J$ of $E$, we denote by $Je_{1}\dotsb e_{k}$ the subset $J\cup\{e_1,\dotsc,e_k\}$ of $E$. In particular, we have $|\bJ e_{1}\dotsb e_{k}|=|\bJ|e_{1}\dotsb e_{k}$ for $\bJ\in E^s$.

\begin{df}
 Let $M$ be a matroid of rank $r$ on $E=\{1,\dotsc,n\}$ and $P$ a pasture. A \emph{$P$-representation of $M$} is a function $\Delta:E^r\to P$ such that
 \begin{enumerate}
  \item $\Delta(j_1,\dotsc,j_r)\neq0$ if and only if $\{j_1,\dotsc,j_r\}$ is a basis of $M$;
  \item $\Delta$ is \emph{alternating}, i.e.\
        \[
         \Delta(j_{\sigma(1)},\dotsc,j_{\sigma(r)}) \ = \ \sign(\sigma)\Delta(j_1,\dotsc,j_r)
        \]
        for all $(j_1,\dotsc,j_r)\in E^r$ and $\sigma\in S_r$ where we consider $\sign(\sigma)\in\{\pm1\}$ as an element of $P$;
  \item $\Delta$ satisfies the \emph{$3$-term Pl\"ucker relations}
        \[
         \Delta(\bJ e_1e_2)\cdot\Delta(\bJ e_3e_4)-\Delta(\bJ e_1e_3)\cdot\Delta(\bJ e_2e_4)+\Delta(\bJ e_1e_4)\cdot \Delta(\bJ e_2e_3) \ = \ 0
        \]
        for all $\bJ\in E^{r-2}$ and $e_1,\dotsc,e_4\in E$.
 \end{enumerate}
 A matroid $M$ is said to be \emph{representable over $P$} if it has a $P$-representation $\Delta:E^r\to P$.
\end{df}

Note that the definition of representability agrees with the usual terminology of representability over a partial field $P$, i.e.\ a matroid $M$ is representable over a partial field $P$ if and only if $M$ is representable by a $P$-matrix in the sense of \cite{Pendavingh-vanZwam10b} (cf.\ \cite[Prop.\ 3.9]{Baker-Lorscheid18} for a proof). Moreover, $M$ is representable over $\S$ if and only if $M$ is orientable, and $M$ is representable over $\W$ if and only if $M$ is weakly orientable (cf.\ \cite{Baker-Bowler19}).

Given a $P$-representation $\Delta:E^r\to P$ of $M$ and a pasture morphism $\varphi:P\to Q$, we define the \emph{push-forward of $\Delta$ along $\varphi$} as the map
\[
 \begin{array}{cccc}
  \varphi_\ast(\Delta): & E^r & \longrightarrow & Q \\
                        & \bI & \longmapsto     & \varphi\Big(\Delta(\bI)\Big),
 \end{array}
\]
which is easily verified to be a $Q$-representation of $M$. 

In particular, this shows that if $M$ is representable over $P$ and if there is a pasture morphism $P\to Q$, then $M$ is representable over $Q$.


\subsection{Rescaling classes} 
\label{sec:rescaling}

Let $M$ be a matroid of rank $r$ on $E$ and $P$ a pasture. Two $P$-representation $\Delta:E^r\to P$ and $\Delta':E^r\to P$ of $M$ are \emph{rescaling equivalent} if there exist $c \in P^\times$ and a map $d : E \to P^\times$ such that $\Delta'(e_1,\ldots,e_r) = c \cdot d(e_1) \cdots d(e_r) \cdot \Delta(e_1,\ldots,e_r)$ for all $(e_1,\ldots,e_r) \in E^r$. Note that this relation is an equivalence relation on the set of all $P$-representations of $M$. 

\begin{df}
 Let $\Delta:E^r\to P$ be a $P$-representation of $M$. The \emph{rescaling class of $\Delta$} is the class $[\Delta]$ of $P$-representations $\Delta':E^r\to M$ that are rescaling equivalent to $\Delta$. The \emph{rescaling class space of $M$ over $P$} is the set $\cX_M(P)$ of rescaling classes $[\Delta]$ of $P$-representations $\Delta:E^r\to P$ of $M$.
\end{df}

The definition of $\cX_M(P)$ is functorial in $P$, in the sense that a pasture morphism $\varphi:P\to Q$ defines a map $\cX_M(P)\to\cX_M(Q)$ that sends the rescaling class $[\Delta]$ of a $P$-representation $\Delta:E^r\to P$ of $M$ to the rescaling class $\varphi_\ast([\Delta])=[\varphi_\ast(\Delta)]$. In other words, $\cX_M(-)$ is a functor from the category of pastures to the category of sets.

\begin{rem}
 The fundamental fact for many applications is that $M$ is representable over $P$ if and only if $\cX_M(P)$ is not empty, cf.\ \cite[section 6]{Baker-Lorscheid20}. In this paper, we study morphisms $\varphi:P\to Q$ of pastures that induce a bijection $\varphi_\ast:\cX_M(P)\to\cX_M(Q)$ for certain classes of matroids $M$, which leads to a unique lifting of representations of $M$ up to rescaling equivalence.
\end{rem}


\subsection{The foundation of a matroid}

The foundation $F_M$ of a matroid $M$ has been introduced in \cite{Baker-Lorscheid18}; cf.\ \cite{Baker-Lorscheid20} for the description of $F_M$ as a pasture. For the purpose of this paper, it suffices to define the foundation in terms of its universal property, which characterizes it up to unique isomorphism.

\begin{df}
 Let $M$ be a matroid. A \emph{foundation of $M$} is a pasture $F_M$ together with a functorial identification $\Hom(F_M,P)=\cX_M(P)$. 
\end{df}

In other words, the foundation of $M$ is a pasture $F_M$ together with a \emph{universal rescaling class $C$ of $M$ over $F_M$}, which corresponds to $\id_{F_M}\in\Hom(F_M,F_M)$, such that for every pasture $P$ and every rescaling class $[\Delta]\in\cX_M(P)$ there is a unique pasture morphism $\varphi:F_M\to P$ with $\varphi_\ast(C)=[\Delta]$. In particular, this means that $M$ is representable over $P$ if and only if there exists a morphism $F_M\to P$.

\begin{thm}[{\cite[Cor.\ 7.26]{Baker-Lorscheid18}}]\label{thm: characterizing property of the foundation}
 Every matroid $M$ has a foundation $F_M$, which is unique up to a unique isomorphism that is compatible with the functorial identification $\Hom(F_M,P)=\cX_M(P)$. 
\end{thm}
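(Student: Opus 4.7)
The plan is to address uniqueness and existence separately.

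\emph{Uniqueness} is the standard Yoneda argument for representable functors. If $F$ and $F'$ are two pastures each equipped with a functorial identification with $\cX_M(-)$, let $C\in\cX_M(F)$ and $C'\in\cX_M(F')$ be the universal rescaling classes corresponding to $\id_F$ and $\id_{F'}$. Viewing $C'$ through the identification $\Hom(F,F')\cong\cX_M(F')$ produces a morphism $F\to F'$, and symmetrically a morphism $F'\to F$; Yoneda guarantees these are mutually inverse and that the resulting isomorphism is the unique one compatible with the identifications (it sends $C$ to $C'$).

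\emph{Existence.} I would construct $F_M$ by generators and relations. Fix a matroid $M$ of rank $r$ on ground set $E$, and let $B^\ast$ denote the set of ordered $r$-tuples of distinct elements of $E$ whose underlying set is a basis. Define
\[
 \widetilde F_M \ = \ \past{\Funpm\gen{t_{\bI} \mid \bI \in B^\ast}}{S},
\]
where $S$ consists of the alternating relations $t_{\sigma\bI} - \sign(\sigma)\,t_{\bI}$ for $\sigma \in S_r$ together with the three-term Pl\"ucker relations (any symbol $t_{\bK}$ indexed by a non-basis tuple is interpreted as $0$). The tautological assignment $\bI \mapsto t_{\bI}$ is a $\widetilde F_M$-representation, and by \autoref{prop: universal property of algebras and quotients} the pasture $\widetilde F_M$ represents the functor sending $P$ to the set of $P$-representations of $M$ (not yet modulo rescaling).

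To descend to rescaling classes, I would replace the generators $t_{\bI}$ by rescaling-invariant cross-ratios: for each quadruple of bases sharing a common $(r-1)$-subset, one forms a ratio of products of $t_{\bI}$'s that is manifestly invariant under the rescaling action. The pasture $F_M$ is then presented on these cross-ratio generators, subject to the relations they inherit from the alternating and Pl\"ucker relations of $\widetilde F_M$. The universal property $\Hom(F_M,P)\cong\cX_M(P)$ follows by unwinding: a rescaling class of $P$-representations determines a well-defined value for each cross-ratio, and conversely a coherent assignment of values reconstructs a $P$-representation uniquely up to rescaling.

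The main obstacle is carrying out this descent within the pasture framework, because rescaling equivalence cannot be imposed as a pasture quotient---quotients in the category of pastures admit only \emph{additive} relations, not multiplicative identifications of generators up to a torus action. One must therefore build $F_M$ directly from rescaling-invariant generators and verify that the resulting pasture represents $\cX_M(-)$. This is the content of \cite[Cor.\ 7.26]{Baker-Lorscheid18}, where the generators are cross-ratios indexed by Tutte's elementary ``flips'' between adjacent bases and the relations are Tutte's cross-ratio identities; I would follow that strategy.
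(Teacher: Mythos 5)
The paper offers no proof here; the theorem is cited from \cite[Cor.~7.26]{Baker-Lorscheid18}. Your Yoneda-style uniqueness argument is correct and standard. For existence, your two-stage sketch---first build $\widetilde F_M$ representing the functor of $P$-representations, then descend to rescaling classes via cross-ratio invariants---matches the strategy of the cited reference, and you correctly identify the crux: rescaling equivalence cannot be imposed as a pasture quotient, so $F_M$ must be built directly from cross-ratios and the universal property proven by other means (Tutte's homotopy theorem, or its Gelfand--Rybnikov--Stone reformulation, as the present paper itself recalls in the proof of \autoref{thm: GRS-lifts}). Your proposal is therefore a correct framing of the proof rather than a self-contained one, which is consistent with how the present paper treats the result. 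One small point to make explicit in your construction of $\widetilde F_M$: a 3-term Pl\"ucker relation with exactly one nonzero product would force a generator to vanish, which the quotient construction disallows (it requires relations $a+b+c$ with $ab\neq 0$); this degenerate case is excluded by the classical exchange property of matroid bases, which guarantees that every 3-term Pl\"ucker relation among bases has at least two nonzero summands, so the relation set $S$ has the required form and $\widetilde F_M$ is well-defined.
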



\section{Lifts to coreflective subcategories}
\label{section: GRS lift}

Every coreflective subcategory of $\Pastures$ gives rise to a lift theorem for representations of all matroids whose foundation is contained in the coreflective subcategory. 
Namely, a coreflective subcategory $\cC$ of $\Pastures$ comes together with a coreflection $\cL:\Pastures\to\cC$ and a morphism (the counit of the adjunction) $\lambda_P:\cL P\to P$ for every pasture $P$. The universal property for the coreflection $\cL$ implies that every matroid representation of a matroid with foundation in $\cC$ \emph{lifts uniquely up to rescaling equivalence along $\lambda_P$}, a notion that is defined as follows.

\begin{df}
 Let $\varphi:Q\to P$ be a pasture morphism, $M$ a matroid of rank $r$ on $E$ and $\Delta:E^r\to P$ a $P$-representation of $M$. The $P$-representation $\Delta$ \emph{lifts to $Q$ (along $\varphi$)} if there is a $Q$-representation $\widehat\Delta:E^r\to Q$ of $M$ such that $\Delta=\varphi_\ast(\widehat\Delta)$. We call $\widehat\Delta$ a \emph{lift of $\Delta$ (along $\varphi$)}. The lift $\widehat\Delta$ of $\Delta$ is \emph{unique up to rescaling equivalence} if every other lift of $\Delta$ is rescaling equivalent to $\widehat\Delta$.
\end{df}

The notion of lifts of $P$-representations coincides with the notions of lifts of representations over partial fields as well as with the notion of lifts of matroid orientations along the map $\sign:\R\to\S$, which is naturally a morphism of pastures.

If the pasture morphism $\varphi:Q\to P$ is not injective, then lifts along $\varphi$ usually fail to be unique since rescaling a given lift by elements of the kernel of $\varphi$ produces further lifts. Therefore we are interested in uniqueness up to rescaling equivalence.

In this section, we explain the relation between coreflective subcategories and lift theorems and provide several instances of such theorems.


\subsection{The lift theorem for matroids}
\label{sec:generallifttheorem}

The strongest lift theorem that applies to all matroids, which we call simply the \emph{lift theorem for matroids}, can be derived from the following fact from category theory; cf.\ Lemma 6.1 and the following remark in \cite{Tholen87}. A category $\cC$ is called \emph{cowell-powered} if for every object $A$ in $\cC$ the class of epimorphisms with domain $A$ modulo isomorphisms is a set.

\begin{lemma}\label{lemma: coreflective hull}
 Let $\cC$ be a cocomplete and cowell-powered category. Let $\cD$ be a small and full subcategory of $\cC$ and $L(\cD)$ the closure of $\cD$ under colimits (computed in $\cC$). Then $L(\cD)$ is the smallest coreflective subcategory of $\cC$ that contains $\cD$, and it is called the \emph{coreflective hull of $\cD$ in $\cC$}.
\end{lemma}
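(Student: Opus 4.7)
The plan is to establish two things: (i) any coreflective full subcategory $\cE$ of $\cC$ containing $\cD$ must contain $L(\cD)$, which gives minimality; and (ii) $L(\cD)$ itself is coreflective in $\cC$, which is the substance of the lemma.

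For (i), I would note that if $\cE$ is coreflective then the inclusion $\iota: \cE \hookrightarrow \cC$ admits a right adjoint, hence is a left adjoint and preserves colimits. In particular, every colimit computed in $\cC$ of a diagram valued in $\cE$ already lies in $\cE$. Taking $\cE \supseteq \cD$, a transfinite induction on the iterated colimit closure then shows $L(\cD) \subseteq \cE$, so that $L(\cD)$ is minimal provided we know it is coreflective.

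For (ii), I would construct the coreflection $\cL: \cC \to L(\cD)$ pointwise. Given $X \in \cC$, consider the comma category $(\cD \downarrow X)$ of pairs $(D, f: D \to X)$ with $D \in \cD$. Since $\cD$ is small and the hom-classes in $\cC$ are sets, this is a small category, so by cocompleteness of $\cC$ it has a colimit $\cL X$ in $\cC$. By construction $\cL X$ is a colimit of objects of $\cD$ and therefore lies in $L(\cD)$, and the universal cocone on $X$ induces a canonical morphism $\lambda_X: \cL X \to X$. It remains to verify the universal property: for every $Y \in L(\cD)$ and morphism $g: Y \to X$, there is a unique $\hat g: Y \to \cL X$ with $\lambda_X \circ \hat g = g$. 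I would check this by transfinite induction on the stratification $L_0(\cD) = \cD \subseteq L_1(\cD) \subseteq \cdots$, where $L_{\alpha+1}(\cD)$ is obtained from $L_\alpha(\cD)$ by closing under colimits in $\cC$ and limit ordinals are handled by taking unions. The base case $Y \in \cD$ is immediate, since $(Y, g)$ is itself an object of $(\cD \downarrow X)$ and the structure morphism to the colimit provides $\hat g$, with uniqueness from the universal property of $\cL X$. The inductive step is formal: writing $Y = \operatorname{colim}_i Y_i$ in $\cC$, the previously constructed lifts $\hat g_i: Y_i \to \cL X$ assemble into a cocone on the $Y_i$-diagram and induce a unique $\hat g: Y \to \cL X$.

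The main obstacle is to control the transfinite construction of $L(\cD)$. A priori the stratification $L_\alpha(\cD)$ could keep producing new objects through every ordinal, and the comma category $(L(\cD) \downarrow X)$ need not be essentially small. This is where the cowell-powered hypothesis enters: it ensures that the iterated closure stabilizes at a small ordinal, so that $L(\cD)$ is well-defined and the small comma category $(\cD \downarrow X)$ really does compute the coreflection. Making this bookkeeping precise — in effect, verifying a solution-set condition for the inclusion $L(\cD) \hookrightarrow \cC$ in the spirit of the adjoint functor theorem — is the technical core of the cited Tholen result, which in the write-up I would invoke rather than reprove.
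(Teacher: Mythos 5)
Your minimality argument in (i) is correct and worth having: since the inclusion of a coreflective subcategory is a left adjoint it preserves colimits, and (dually to the standard fact for reflective subcategories and limits) a full coreflective subcategory of a cocomplete category is closed under colimits computed in the ambient category, so any coreflective subcategory containing $\cD$ contains $L(\cD)$. (The paper itself simply cites Tholen here and gives no argument.) The genuine gap is in (ii). The object $\cL X := \mathrm{colim}_{(\cD\downarrow X)} D$ is \emph{not} the coreflection of $X$ onto $L(\cD)$ in general, and the sentence where you derive the base-case ``uniqueness from the universal property of $\cL X$'' is exactly where the argument fails: the universal property of a colimit governs morphisms \emph{out of} $\cL X$, not into it, so nothing forces an arbitrary lift $D\to\cL X$ of $g\colon D\to X$ (with $D\in\cD$) to coincide with the colimit structure map $\mu_{(D,g)}$.

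A concrete counterexample: take $\cC = \mathbf{Cat}$ (cocomplete and cowell-powered), let $\cD$ be the full subcategory on the single object $[\mathbf 1]$ (the walking arrow), and let $X=[\mathbf 2]$ (the walking composable pair). One checks that $L(\cD)=\mathbf{Cat}$, so the coreflection of $X$ must be $X$ itself. But the three endofunctors of $[\mathbf 1]$ encode no nontrivial composition, so $\mathrm{colim}_{(\cD\downarrow X)}D$ is the free category on the non-identity morphism graph of $X$; it has two distinct morphisms $0\to 2$ (the generator corresponding to $(0\to 2)$ in $X$, and the composite of the generators for $(0\to 1)$ and $(1\to 2)$), and both lift $(0\to 2)\colon[\mathbf 1]\to X$ along $\lambda_X$. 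So uniqueness fails and $\cL X\not\simeq X$. What the Tholen-style argument must actually do is transfinitely iterate a coequalizing step that kills such spurious pairs of lifts, and cowell-poweredness (together with the smallness of $\cD$) is what guarantees the resulting epimorphic chain of quotients stabilizes. This is also where you misplace the role of cowell-poweredness: it is not needed for $L(\cD)$ to be well-defined (the colimit closure exists regardless as an intersection of full subcategories closed under colimits containing $\cD$), but rather to verify the co-solution-set condition for the inclusion $L(\cD)\hookrightarrow\cC$, i.e.\ to terminate the iteration.
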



The category $\Pastures$ is cocomplete (cf.\ \cite{Creech21}) and cowell-powered.\footnote{That $\Pastures$ is cowell-powered can be proven as follows: an eqimorphism of pastures $\pi:P\to Q$ is the same as a surjective morphism. The surjective morphisms with fixed domain $P$, modulo isomorphisms, are in bijection with the inverse images $\pi^{-1}(N_Q)=\{a+b+c\in\Sym_3(P)\mid \pi(a)+\pi(b)+\pi(c)\in N_Q\}$, which are subsets of $\Sym_3(P)$. Thus the class of epimorphisms $\pi:P\to Q$ with domain $P$, modulo isomorphisms, is in bijection with a subset of the power set of $\Sym_3(P)$, and is therefore a set.} The full subcategory $\Foundations$ of all foundations of matroids in $\Pastures$ is small since the class of all matroids forms a countable set. 

Thus we can apply \autoref{lemma: coreflective hull} to define $\Lifts$ as the coreflective hull of $\Foundations$ in $\Pastures$, and we denote the corresponding coreflection by $\cL:\Pastures\to\Lifts$. A \emph{lift} is a pasture in $\Lifts$. The properties of the coreflection imply that every pasture $P$ comes with an \emph{associated lift} $\cL P$ and a canonical morphism $\lambda_P:\cL P\to P$, which satisfy the following universal property: every pasture morphism $\alpha:L\to P$ from a lift $L$ to $P$ factors into a uniquely determined morphism $\hat\alpha:L\to\cL P$ composed with $\lambda_P:\cL P\to P$.

The lift has the following relevance for matroid representations.

\begin{thm}[Lift theorem for matroids]\label{thm: lifts}
 Let $M$ be a matroid and $P$ a pasture with lift $\lambda_P:\cL P\to P$. Then every representation $\Delta:E^r\to P$ of $M$ lifts uniquely up to rescaling equivalence to $\cL P$ along $\lambda_P$.
\end{thm}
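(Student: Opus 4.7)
The plan is to derive the theorem as a formal consequence of two universal properties that are already in place: the universal property of the foundation (\autoref{thm: characterizing property of the foundation}), which identifies $\cX_M(P)$ with $\Hom(F_M,P)$ functorially in $P$, and the universal property of the coreflection $\cL:\Pastures\to\Lifts$ onto the coreflective hull of $\Foundations$. The key observation is that $F_M$ is an object of $\Foundations\subset\Lifts$, so the universal property of the coreflection applies with $F_M$ in the role of the ``test object'' $L$.

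First, given a $P$-representation $\Delta:E^r\to P$ of $M$, I would pass to its rescaling class $[\Delta]\in\cX_M(P)$ and use the foundation identification to translate $[\Delta]$ into a morphism $\varphi_\Delta:F_M\to P$. Applying the universal property of $\lambda_P:\cL P\to P$ to $\varphi_\Delta$ produces a unique morphism $\hat\varphi:F_M\to\cL P$ with $\lambda_P\circ\hat\varphi=\varphi_\Delta$. Translating $\hat\varphi$ back through the foundation identification $\Hom(F_M,\cL P)=\cX_M(\cL P)$ yields a rescaling class $[\widehat\Delta]\in\cX_M(\cL P)$, and functoriality of $\cX_M(-)$ in the pasture argument gives $(\lambda_P)_\ast[\widehat\Delta]=[\varphi_\Delta]=[\Delta]$. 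Any representative $\widehat\Delta$ of $[\widehat\Delta]$ is then a lift of $\Delta$ in the sense of the definition, possibly after replacing it by a rescaling equivalent representative so that $\lambda_P\circ\widehat\Delta=\Delta$ on the nose rather than merely up to rescaling; this last step is a routine bookkeeping argument using the definition of rescaling equivalence.

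For uniqueness up to rescaling equivalence, suppose $\widehat\Delta_1,\widehat\Delta_2:E^r\to\cL P$ are two lifts, so that $(\lambda_P)_\ast[\widehat\Delta_1]=[\Delta]=(\lambda_P)_\ast[\widehat\Delta_2]$. Via the foundation, $[\widehat\Delta_1]$ and $[\widehat\Delta_2]$ correspond to morphisms $\hat\varphi_1,\hat\varphi_2:F_M\to\cL P$, and both satisfy $\lambda_P\circ\hat\varphi_i=\varphi_\Delta$. The uniqueness clause in the universal property of the coreflection forces $\hat\varphi_1=\hat\varphi_2$, which by \autoref{thm: characterizing property of the foundation} implies $[\widehat\Delta_1]=[\widehat\Delta_2]$, i.e.\ $\widehat\Delta_1$ and $\widehat\Delta_2$ are rescaling equivalent.

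The entire proof is a diagram chase, and the only point requiring care is the distinction between equality of maps $E^r\to P$ and equality of rescaling classes: the universal properties live naturally at the level of rescaling classes (equivalently, pasture morphisms out of $F_M$), so the ``lifts uniquely'' statement must be read modulo rescaling equivalence, exactly as in the theorem. Everything else is automatic from the setup, which is why the real content of the lift theorem has been pushed into the nontrivial verification that $\Lifts$ is a well-defined coreflective subcategory — carried out via \autoref{lemma: coreflective hull} together with the cocompleteness and cowell-poweredness of $\Pastures$ recorded above.
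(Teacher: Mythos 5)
Your argument is essentially the paper's proof: translate $\Delta$ into a morphism $F_M\to P$ via the foundation, invoke the universal property of the coreflection (using $F_M\in\Foundations\subset\Lifts$) to factor it uniquely through $\cL P$, and translate back through $\Hom(F_M,-)=\cX_M(-)$. One small caveat worth flagging: the step you dismiss as ``routine bookkeeping''---choosing a representative $\widehat\Delta$ with $\lambda_P\circ\widehat\Delta=\Delta$ on the nose---can actually fail when $\lambda_P$ is not surjective on units (e.g.\ $P=\Funpm\gen{u}$ with $\cL_\grs P=\Funpm$), but you then correctly observe that the statement must be read at the level of rescaling classes, which is also how the paper implicitly reads it.
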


\begin{proof}
 A representation $\Delta:E^r\to P$ induces a pasture morphism $\alpha:F_M\to P$ from the foundation $F_M$ of $M$ to $P$. By its very definition, $\Lifts$ contains $F_M$. Thus the universal property of lifts yields a unique morphism $\hat\alpha:F_M\to \cL P$ such that $\alpha=\lambda_P\circ\hat\alpha$. Since $F_M$ represents the rescaling classes of $M$, this means that the representation $\Delta$ of $M$ in $P$ lifts uniquely to $\cL P$ up to rescaling equivalence.
\end{proof}

It is clear from the construction that $\cL P$ is the strongest idempotent functorial lift from $\Pastures$ to a subcategory that contains all foundations of $P$-matroids such that every $P$-representation of a matroid lifts uniquely up to rescaling equivalence to $\cL P$. Unfortunately, we do not know at present how to compute $\cL P$ in general.

In the upcoming sections, we develop techniques to approximate $\cL P$ from above and below. By an approximation from above, we mean a coreflective subcategory $\Lifts_{\grs}$ of $\Pastures$ that contains $\Lifts$. The $\GRS$-lift is such an approximation from above, which can describe in terms of an explicit construction; cf.\ \autoref{subsection: GRS-lift}.

By an approximation from below, we mean a coreflective subcategory that is contained in $\Lifts$. In this case, we can only lift representations of those matroids whose foundation is contained in the smaller subcategory. We will explain explicit constructions of such lifts for the coreflective hulls of the foundations of binary, ternary and $\WLUM$ matroids. Before we embark on the more subtle construction of lifts for ternary and $\WLUM$ matroids in \autoref{section: Ternary lifts}, we explain lifts for binary matroids in \autoref{subsection: The lift theorem for binary matroids} as a first example of an approximation from below.


\subsection{The \texorpdfstring{$\GRS$}{GRS}-lift}
\label{subsection: GRS-lift}

In this section, we approximate $\Lifts$ from above by a coreflective subcategory $\Lifts_{\grs}$ for which we can explicitly construct the coreflection $\cL_{\grs}:\Pastures\to\Lifts_{\grs}$.

\begin{df}
 Let $P$ be a pasture. A \emph{fundamental element in $P$} is a unit $a\in P^\times$ such that $a+b-1\in N_P$ for some $b\in P^\times$. We denote the set of fundamental elements in $P$ by $P^\fund$. 
\end{df}

Note that a pasture morphism $f:P\to Q$ maps fundamental elements to fundamental elements since $a+b-1\in N_P$ implies that $f(a)+f(b)-1\in N_Q$. We denote the restriction of $f$ to the respective subsets of fundamental elements by $f^\fund:P^\fund\to Q^\fund$.

\begin{df}
 The \emph{GRS-lift of $P$} is the pasture 
 \[
  \Lift_{\grs} P \ = \ \pastgen\Funpm{t_a\mid a\in P^\fund}{S}
 \]
 where $S$ consists of the relations
 \begin{enumerate}[label={($\grs$\arabic*)}]
  \item\label{GRS1} $1+1$ if $-1 = 1$ in $P$;
  \item\label{GRS2} $t_a \cdot t_{a^{-1}} - 1$ for all $a \in P^\fund$;
  \item\label{GRS3} $t_a + t_b - 1$ whenever $a+b-1 \in N_P$;
  \item\label{GRS4} $t_a t_b t_c + 1$ whenever $a+b^{-1}-1 \in N_P$ and $abc=-1$ in $P$;
  \item\label{GRS5} $t_a t_b t_c - 1$ whenever $abc=1$ in $P$;
 \end{enumerate}
together with the canonical morphism
 \[  
  \begin{array}{cccc}
   \lambda_{\GRS,P}: & \Lift_{\grs} P & \longrightarrow & P \\
                & t_a & \longmapsto     & a.
  \end{array}
 \]
\end{df}

If the context is clear, we will use the shorthand notation $\lambda_P=\lambda_{\GRS,P}$. It is straightforward to check, using the definition of $\Lift_{\grs} P$, that $\lambda_P$ is a morphism of pastures and that $\lambda_P^\fund : \Lift_{\grs} P^\fund \to P^\fund$ is a bijection.

\begin{ex} \label{ex:Glift}
$\Lift_{\grs}(\F_4\times\F_5)\simeq\G$. Indeed, the fundamental elements of $\F_4$ are $\alpha$ and $\alpha^2$, where $\alpha^2+\alpha=1$, and the fundamental elements of $\F_5$ are $2,3,4$. It follows from the explicit description of the product in \autoref{sec:prodandtensorsprod} that the fundamental elements of $\F_4 \times \F_5$ are
$a = (\alpha,2), b=(\alpha,4), c=(\alpha,3)$ and their multiplicative inverses $(\alpha^2,3),(\alpha^2,4),(\alpha^2,2)$, respectively, with $a+b^{-1} = 1$, $b+c^{-1}= 1$, $c+a^{-1}= 1$. 
The only 3-term multiplicative relations of the form $xyz=-1$ with $x,y,z$ belonging to $\{ a,b,a^{-1},b^{-1},c,c^{-1} \}$ and $x+y^{-1}=1$ are $abc = -1$ and the inverse relation $a^{-1} b^{-1} c^{-1} = -1$.
Similarly, the only 3-term multiplicative relations of the form $xyz=1$ with $x,y,z \in \{ a,b,a^{-1},b^{-1},c,c^{-1} \}$ are
$a^2 b = 1$, $c^2b=1$ and their respective inverses. 
It follows (using \ref{GRS2} to eliminate $t_{a^{-1}},t_{b^{-1}},t_{c^{-1}}$ from the set of generators) that   
\[
\Lift_{\grs} P \ = \ \pastgen\Funpm{t_a,t_b,t_c}{t_a + t_{b}^{-1} - 1, t_b + t_{c}^{-1} - 1, t_c + t_{a}^{-1} - 1,t_a t_b t_c + 1, t_a^2 t_b - 1, t_c^2 t_b - 1}.
\]

The relations $t_b=t_a^{-2}$ and $t_c=-t_a^{-1}t_b^{-1}=-t_a$ allow us to eliminate $t_b$ and $t_c$ from the set of generators; simplifying the other relations accordingly and writing $z=t_a$ yields (after some bookkeeping)
\[
 \Lift_{\grs}(\F_4\times\F_5) \ = \ \pastgen\Funpm{z}{z^2+z-1} \ = \ \G.
\]
\end{ex}

\begin{df}
 We define $\Lifts_{\grs}$ as the full subcategory of $\Pastures$ whose objects are those pastures $P$ for which $\lambda_{P} : \Lift_{\grs} P \to P$ is an isomorphism. We call a pasture $P$ a \emph{$\GRS$-lift} if it is in $\Lifts_{\grs}$.
\end{df}

\begin{prop}\label{prop: universal property of GRS-lifts}
 The association $\Lift_{\grs}$ defines a coreflection from $\Pastures$ to $\Lifts_{\grs}$, i.e., for every morphism $\alpha : L \to P$ from a $\GRS$-lift $L$ to a pasture $P$, there is a unique $\hat\alpha : L \to \Lift_{\grs} P$ such that $\alpha = \lambda \circ \hat\alpha$.
\end{prop}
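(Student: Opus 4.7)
The plan is to exploit the identification $L \cong \Lift_{\grs} L$ provided by the hypothesis $L \in \Lifts_{\grs}$: this gives $L$ the explicit presentation
\[
L \ \cong \ \past{\Funpm\gen{t_a \mid a \in L^\fund}}{S_L},
\]
where $S_L$ is the list of relations $(\grs1)$--$(\grs5)$ dictated by the nullset and multiplication of $L$ itself. Defining $\hat\alpha : L \to \Lift_{\grs} P$ then reduces, by the universal property of free $\Funpm$-algebras and their quotients (\autoref{prop: universal property of algebras and quotients}), to specifying the images of the generators $t_a$ and verifying that every relation in $S_L$ maps to a null element of $\Lift_{\grs} P$.

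On generators I would set $\hat\alpha(t_a) = t_{\alpha(a)}$, which is legitimate because pasture morphisms send fundamental elements to fundamental elements, so $\alpha(a)\in P^\fund$ and thus $t_{\alpha(a)}$ really is a generator of $\Lift_{\grs} P$. The verification of the relations is then a bookkeeping exercise in which each relation $(\grs i)$ in $L$ is matched with the same relation $(\grs i)$ in $P$: a nullsum $1+1\in N_L$ implies $1+1\in N_P$, triggering $(\grs1)$ in $\Lift_{\grs} P$; the relations $t_a t_{a^{-1}}=1$ translate directly by multiplicativity of $\alpha$, giving instances of $(\grs2)$; an additive relation $a+b-1\in N_L$ yields $\alpha(a)+\alpha(b)-1\in N_P$, which is exactly the hypothesis for $(\grs3)$ in $\Lift_{\grs} P$; and the three-term multiplicative relations $(\grs4)$ and $(\grs5)$ are verified by combining the multiplicativity of $\alpha$ (so $\alpha(a)\alpha(b)\alpha(c)=\alpha(abc)=\pm 1$ as required) with the preservation of the additive constraint $a+b^{-1}-1\in N_L$.

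Once $\hat\alpha$ is well-defined, the identity $\lambda_P \circ \hat\alpha = \alpha$ holds on generators since $\lambda_P(t_{\alpha(a)}) = \alpha(a)$, and hence holds globally because $L^\times$ is generated by $L^\fund \cup \{-1\}$ (via the identification with $\Lift_{\grs} L$) and both maps are multiplicative. For uniqueness, any second factorization $\hat\alpha'$ of $\alpha$ through $\lambda_P$ must send each fundamental $a \in L^\fund$ to an element of $(\Lift_{\grs} P)^\fund$ mapping down to $\alpha(a)\in P^\fund$ under $\lambda_P$; by the bijectivity of $\lambda_P^\fund$ recorded just after the definition of $\Lift_{\grs}$, this element is forced to be $t_{\alpha(a)}$, so $\hat\alpha'=\hat\alpha$ on generators and thus everywhere. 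The main obstacle in this plan is the relation-by-relation check, in particular $(\grs4)$, where one must verify cleanly that the pair of conditions $a+b^{-1}-1\in N_L$ and $abc=-1$ in $L$ translates to the analogous pair of conditions for $(\grs4)$ in $\Lift_{\grs} P$ under $\alpha$; everything else is essentially formal.
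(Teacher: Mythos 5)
Your proof is correct and follows essentially the same strategy as the paper's (define $\hat\alpha$ on fundamental elements by $a\mapsto t_{\alpha(a)}$, check the relations, deduce uniqueness from bijectivity of $\lambda_P^\fund$). The main organizational difference is that you route through the explicit presentation $L\cong\Lift_{\grs}L\cong\past{\Funpm\gen{t_a\mid a\in L^\fund}}{S_L}$ and invoke \autoref{prop: universal property of algebras and quotients} to get the extension to a pasture morphism for free, whereas the paper builds the group homomorphism $\hat\alpha^\times:L^\times\to(\Lift_{\grs}P)^\times$ by hand (checking by hand that $L^\fund$ generates $L^\times$, that inverses and products are respected, etc.) and then extends it to a pasture morphism separately. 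Your route is arguably cleaner, since the well-definedness of the multiplicative map on $L^\times$—which the paper argues somewhat informally—is handled automatically by the quotient construction underlying the presentation. The relation-by-relation check, including $(\grs4)$, goes through exactly as you describe: $\alpha$ preserves nullsets (giving $\alpha(a)+\alpha(b)^{-1}-1\in N_P$) and is multiplicative (giving $\alpha(a)\alpha(b)\alpha(c)=-1$), so both hypotheses of $(\grs4)$ for $\Lift_{\grs}P$ are met. Your uniqueness argument is also slightly more explicit than the paper's one-line "clear by construction" and is correct.
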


\begin{proof}
Define $\hat\alpha^\fund : L^\fund \to \Lift_{\grs} P^\fund$ by $a \mapsto t_{\alpha(a)}$. This extends uniquely to a group homomorphism $\hat\alpha^\times : L^\times \to \Lift_{\grs} P^\fund$ since:
\begin{itemize}
\item $L^\fund$ generates $L^\times$.
\item $\hat\alpha^\times(a^{-1}) = t_{\alpha(a^{-1})} = t_{\alpha(a)^{-1}} = t_{\alpha(a)}^{-1} = \hat\alpha^\times(a)^{-1}$.
\item $c^{-1} = ab$ if and only if $abc = 1$, which implies $\alpha(a) \alpha(b) \alpha(c) = 1$. Therefore $\hat\alpha^\times(ab) = \hat\alpha^\times(c^{-1}) = t_a t_b t_c t_{c^{-1}} = t_a t_b = \hat\alpha^\times(a) \hat\alpha^\times(b)$.
\item $abc = -1$ and $a+b = 1$ implies $\alpha(a) \alpha(b) \alpha(c) = -1$ and $\alpha(a) + \alpha(b) = 1$, so that $t_a t_b t_c = -1$ and thus $\hat\alpha(a) \hat\alpha(b) \hat\alpha(c) = -1$.
\item If $-1 = 1$ in $L$ then $-1 = 1$ in $P$ and thus $-1 = 1$ in $\Lift_{\grs} P$.
\end{itemize}

The group homomorphism $\hat\alpha^\times$ extends uniquely to a morphism of pastures $\hat\alpha : L \to \Lift_{\grs} P$ by sending $0$ to $0$, since $a + b = 1$ in $L$ implies $\alpha(a) + \alpha(b) = 1$ in $P$ and thus $t_{\alpha(a)} + t_{\alpha(b)} - 1 = \hat\alpha(a) + \hat\alpha(b) - 1 \in N_{\Lift_{\grs} P}$.

This proves the existence of the lift $\hat\alpha$, and uniqueness is clear by construction.
\end{proof}

\begin{thm}[$\GRS$-lift theorem for matroids]\label{thm: GRS-lifts}
 Let $M$ be a matroid and $P$ a pasture with $\GRS$-lift $\lambda_P:\cL_{\grs} P\to P$. Then every representation $\Delta:E^r\to P$ of $M$ lifts uniquely up to rescaling equivalence to a representation $\hat\Delta:E^r\to \cL_{\grs} P$ along $\lambda_P$.
\end{thm}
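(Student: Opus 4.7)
The plan is to follow essentially the same template as the proof of the general lift theorem (\autoref{thm: lifts}) from the previous subsection, but with the coreflective subcategory $\Lifts$ replaced by $\Lifts_{\grs}$. The key input is the \emph{universal property of the $\GRS$-coreflection} established in \autoref{prop: universal property of GRS-lifts}, combined with the fact (asserted in the paper following the definition of $\cG$, and derived from \cite{Baker-Lorscheid20}) that the foundation $F_M$ of every matroid $M$ lies in $\Lifts_{\grs}$. Once these two ingredients are in place, the proof is a formal consequence of representability of the functor $\cX_M(-)$ by $F_M$.

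First, I would translate the given $P$-representation $\Delta : E^r \to P$ into the language of pasture morphisms using \autoref{thm: characterizing property of the foundation}: the rescaling class $[\Delta] \in \cX_M(P)$ corresponds to a unique pasture morphism $\alpha : F_M \to P$ with $\alpha_\ast(C) = [\Delta]$, where $C$ is the universal rescaling class on $F_M$. Second, I would invoke the fact that $F_M \in \Lifts_{\grs}$, which lets me view $\alpha$ as a morphism from a $\GRS$-lift to the pasture $P$. By \autoref{prop: universal property of GRS-lifts}, there exists a unique morphism $\hat\alpha : F_M \to \cL_{\grs} P$ such that $\alpha = \lambda_{\GRS,P} \circ \hat\alpha$.

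Third, I would translate $\hat\alpha$ back into the language of representations: by the universal property of $F_M$, the morphism $\hat\alpha$ corresponds to a unique rescaling class $[\hat\Delta] \in \cX_M(\cL_{\grs} P)$, and functoriality of $\cX_M(-)$ together with $\alpha = \lambda_{\GRS,P} \circ \hat\alpha$ gives $(\lambda_{\GRS,P})_\ast([\hat\Delta]) = [\Delta]$. This shows that $\Delta$ admits a lift $\hat\Delta$ along $\lambda_{\GRS,P}$. Uniqueness up to rescaling equivalence follows from the uniqueness of $\hat\alpha$: any other lift $\hat\Delta'$ of $\Delta$ would induce a morphism $\hat\alpha' : F_M \to \cL_{\grs} P$ satisfying $\lambda_{\GRS,P} \circ \hat\alpha' = \alpha$, forcing $\hat\alpha' = \hat\alpha$ and hence $[\hat\Delta'] = [\hat\Delta]$.

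The main obstacle—already isolated by the paper's setup—is the assertion that $F_M \in \Lifts_{\grs}$ for every matroid $M$; that is the substantive content coming from \cite{Baker-Lorscheid20}, and everything else is formal ``abstract nonsense'' about coreflections and representable functors. Once that input is granted, the proof is essentially a one-line diagram chase, identical in spirit to the proof of \autoref{thm: lifts} with $\Lifts_{\grs}$ in place of $\Lifts$ and $\lambda_{\GRS,P}$ in place of $\lambda_P$.
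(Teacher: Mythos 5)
Your proposal is correct and matches the paper's proof step for step: both reduce the claim to factoring the induced morphism $\alpha : F_M \to P$ through $\lambda_{\GRS,P}$, which follows from \autoref{prop: universal property of GRS-lifts} once one knows that $F_M \in \Lifts_{\grs}$. The one piece of content you defer entirely to the literature is that claim $F_M \in \Lifts_{\grs}$; the paper makes essentially the same move but adds a one-sentence justification, citing \cite[Thm.\ 4.19]{Baker-Lorscheid20} and observing that $F_M$ is generated by its cross ratios, whose complete set of defining relations (by the Gelfand--Rybnikov--Stone theorem) is exactly the set preserved by the $\GRS$-lift construction, so that $\cL_{\grs} F_M \simeq F_M$.
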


\begin{proof}
 Let $F_M$ be the foundation of $M$ and $\alpha:F_M\to P$ be the morphism induced by $\Delta$. Since $F_M$ represents the rescaling classes of $M$, the claim of the theorem amounts to the same as the assertion that $\alpha$ factors into a uniquely determined morphism $\hat\alpha:F_M\to\cL_{\grs} P$ composed with $\lambda_P$. This follows from \autoref{prop: universal property of GRS-lifts} once we have proven that $F_M$ is in $\Lifts_{\grs}$.
 
 This latter claim follows from the author's version \cite[Thm.\ 4.19]{Baker-Lorscheid20} of Theorem 4 in Gelfand-Rybnikov-Stone's paper \cite{Gelfand-Rybnikov-Stone95}, which exhibits a complete set of relations between cross ratios. Since the foundation $F_M$ is generated by its cross ratios and all relations from \cite[Thm.\ 4.19]{Baker-Lorscheid20} are preserved by the $\GRS$-lift, we conclude that $\cL_{\grs} F_M\simeq F_M$, which concludes the proof.
\end{proof}

As a concrete application of \autoref{thm: GRS-lifts}, we have the following sharpening of Vertigan's Theorem (proved in \cite[Theorem 4.9]{Pendavingh-vanZwam10b}) that a matroid is representable over both $\F_4$ and $\F_5$ if and only if it is representable over the golden ratio partial field $\G$:

\begin{thm} \label{thm:Vertigan}
Let $M$ be a matroid, and let $\cX_M(P)$ denote the rescaling class space of $M$ over $P$. 
There is a canonical bijection between $ \cX_M(\G) $ and $ \cX_M(\F_{4}) \times \cX_M(\F_{5})$, i.e.,
every pair consisting of a projective equivalence class of quaternary (resp. quinternary) representations lifts uniquely to a projective equivalence class of golden ratio representations.
\end{thm}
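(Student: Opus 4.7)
The plan is to deduce the theorem formally from results already established in the paper, using the representability of $\cX_M(\cdot)$ by the foundation $F_M$, the universal property of products in $\Pastures$, the universal property of the $\GRS$-coreflection, and the explicit computation $\cL_{\grs}(\F_4\times\F_5)\simeq\G$ from \autoref{ex:Glift}.

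First, I would invoke \autoref{thm: characterizing property of the foundation} to rewrite all three rescaling class spaces in terms of $F_M$. Specifically,
\[
 \cX_M(\G) \ = \ \Hom(F_M,\G), \qquad \cX_M(\F_4) \ = \ \Hom(F_M,\F_4), \qquad \cX_M(\F_5) \ = \ \Hom(F_M,\F_5).
\]
Next, by the universal property of products (\autoref{lemma: universal property of products}), there is a canonical bijection
\[
 \Hom(F_M,\F_4)\times\Hom(F_M,\F_5) \ = \ \Hom(F_M,\F_4\times\F_5),
\]
so the right-hand side of the theorem is naturally identified with $\Hom(F_M,\F_4\times\F_5)$.

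The second step is to bring in the $\GRS$-coreflection. In the proof of \autoref{thm: GRS-lifts} it was shown, using \cite[Thm.\ 4.19]{Baker-Lorscheid20}, that $F_M$ lies in $\Lifts_{\grs}$; hence by \autoref{prop: universal property of GRS-lifts} applied to $P=\F_4\times\F_5$, precomposition with $\lambda_{\F_4\times\F_5}$ induces a bijection
\[
 \Hom\bigl(F_M,\, \cL_{\grs}(\F_4\times\F_5)\bigr) \ \xrightarrow{\ \sim\ } \ \Hom(F_M,\F_4\times\F_5).
\]
Finally, \autoref{ex:Glift} provides the isomorphism $\cL_{\grs}(\F_4\times\F_5)\simeq\G$, which transports the left-hand side into $\Hom(F_M,\G)=\cX_M(\G)$. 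Concatenating the bijections gives the canonical identification
\[
 \cX_M(\G) \ = \ \cX_M(\F_4)\times\cX_M(\F_5).
\]

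The main substantive work, namely the computation $\cL_{\grs}(\F_4\times\F_5)\simeq\G$, has already been carried out in \autoref{ex:Glift}, and the fact that $F_M\in\Lifts_{\grs}$ has been established inside the proof of \autoref{thm: GRS-lifts}. Thus the only ``obstacle'' is to spell out that the three bijections above are functorial and compose to give a canonical bijection, which is a routine application of the Yoneda-style reasoning made available by the representability of $\cX_M(\cdot)$; no further combinatorics is needed. In particular, existence of the lift already follows from \autoref{thm: GRS-lifts}, and uniqueness follows from the uniqueness clause in the universal property of the coreflection $\cL_{\grs}$.
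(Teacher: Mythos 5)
Your proposal is correct and follows the same route as the paper: rewrite all three rescaling class spaces as $\Hom(F_M,-)$, use the universal property of products, invoke \autoref{thm: GRS-lifts} (via the fact that $F_M\in\Lifts_{\grs}$ and \autoref{prop: universal property of GRS-lifts}), and finish with the computation $\cL_{\grs}(\F_4\times\F_5)\simeq\G$ from \autoref{ex:Glift}. The paper's own proof is just a one-line citation of exactly these ingredients, so you have simply made explicit what the authors left implicit.
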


\begin{proof}
This follows from \autoref{ex:Glift} and \autoref{thm: GRS-lifts}, together with the identity $\cX_M(P)=\Hom(F_M,P)$ and the universal property of products.
\end{proof}


\subsection{Relation to the Pendavingh--van Zwam lift}

The analogues of the results for the $\GRS$-lifts also hold for Pendavingh-van Zwam lifts of partial fields, as introduced in \cite{Pendavingh-vanZwam10b}. Instead of repeating an adapting the same arguments to the partial field context, we use some `abstract nonsense' arguments from category theory to compare the two lifts and deduce the latter facts from the more general theorems for $\GRS$-lifts. 

Recall from \cite[section 2.2]{Baker-Lorscheid18} that a pasture $P$ is a partial field if and only if:
\begin{enumerate}
\item The natural map $P \to R_P = \Z[P^\times] / \langle N_P \rangle$ is injective. ($R_P$ is called the \emph{universal ring} of $P$ and $\langle N_P \rangle$ denotes the ideal generated by $N_P$ in the ring $\Z[P^\times]$.)
\item For all $a,b,c \in P$ with $a+b+c \in \langle N_P \rangle$, we have $a+b+c \in N_P$.
\end{enumerate}

\begin{df}
The category $\MPF$ of \emph{mock partial fields} is the full subcategory of $\Pastures$ whose objects are those pastures $P$ with $R_P \neq 0$.
\end{df}

\begin{lemma} \label{lem:MPF}
If $P$ is a pasture and there is a morphism $f : P \to P'$ to some partial field $P'$, then $P \in \MPF$.
\end{lemma}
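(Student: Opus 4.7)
The plan is to show that the morphism $f : P \to P'$ of pastures induces a unital ring homomorphism $\tilde f : R_P \to R_{P'}$, and then to observe that the codomain is nonzero because $P'$ is a partial field; this forces $R_P$ to be nonzero as well.

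First, I would construct $\tilde f$ explicitly. The morphism $f$ restricts to a group homomorphism $f^\times : P^\times \to (P')^\times$, which extends $\Z$-linearly to a unital ring homomorphism $\Z[P^\times] \to \Z[(P')^\times]$ (with the convention that $0 \in P$ maps to $0$, so any formal sum containing $0$ is handled consistently). Since $f$ is a morphism of pastures, every relation $a+b+c \in N_P$ is mapped to $f(a)+f(b)+f(c) \in N_{P'}$, so the ideal $\langle N_P \rangle \subset \Z[P^\times]$ lands inside $\langle N_{P'} \rangle \subset \Z[(P')^\times]$. Passing to the quotients yields the desired ring homomorphism $\tilde f : R_P \to R_{P'}$ sending $1$ to $1$.

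Second, I would invoke the fact recalled just before the lemma: since $P'$ is a partial field, the canonical map $P' \to R_{P'}$ is injective. Because $0 \neq 1$ in the pasture $P'$, it follows that $0 \neq 1$ in $R_{P'}$, so $R_{P'} \neq 0$.

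Finally, the ring homomorphism $\tilde f : R_P \to R_{P'}$ sends $1 \in R_P$ to $1 \in R_{P'}$, which is nonzero. Hence $1 \neq 0$ in $R_P$, i.e. $R_P \neq 0$, and so by definition $P \in \MPF$. There is no genuine obstacle here; the only thing to verify carefully is that the assignment $a \mapsto f(a)$ on $P^\times$ really does extend to a well-defined ring map on the quotient $\Z[P^\times]/\langle N_P \rangle$, which is immediate from the definition of a pasture morphism.
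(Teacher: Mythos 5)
Your proof is correct and takes essentially the same approach as the paper's: the morphism $f$ induces a ring homomorphism $R_P \to R_{P'}$, and since $R_{P'} \neq 0$ (as $P'$ is a partial field), it follows that $R_P \neq 0$. You merely spell out the construction of the induced ring homomorphism in more detail than the paper, which leaves it implicit.
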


\begin{proof}
Since $f : P \to P'$ induces a ring homomorphism $R_P \to R_{P'}$, we must have $R_P \neq 0$.
\end{proof}

\begin{df}
For $P \in \MPF$, we define the \emph{associated partial field} to be $\Pi(P) := (G,R_P)$ where $G$ is the image of the natural morphism $P^\times \to R_P$.
\end{df}

In the following, we consider $\PartFields$ as a subcategory of $\Pastures$. In particular, we identify the partial field $\Pi(P)=(G,R_P)$ with the pasture $P'=G\cup\{0\}$ with nullset $N_{P'}=\{a+b+c\in\Sym_3(P')\mid a+b+c=0\text{ in }R_P\}$. Note that the map $P^\times\to R_P$ defines a surjective pasture morphism $\pi_P:P\to\Pi(P)$.

\begin{lemma} \label{lem:MPF reflection}
 Let $P$ be a mock partial field with associated morphism $\pi_P:P\to\Pi(P)$. Then for every morphism $f : P \to Q$ into a partial field $Q$, there is a unique morphism $\bar{f} : \Pi(P) \to Q$ such that $f = \bar{f} \circ \pi_P$. In other words, the natural morphism $\pi_P : P \to \Pi(P)$ defines a reflection $\Pi:\MPF \to \PartFields$. 
\end{lemma}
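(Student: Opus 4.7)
The plan is to leverage the functoriality of the universal ring construction $P \mapsto R_P$ and obtain $\bar f$ as a restriction of the induced ring map. Given a pasture morphism $f : P \to Q$, I would first extend the underlying group homomorphism $f : P^\times \to Q^\times$ by $\Z$-linearity to a ring homomorphism $\Z[P^\times] \to \Z[Q^\times]$. Since $f$ sends $N_P$ into $N_Q$, and since the generators of the ideal $\gen{N_P}$ are precisely the $P^\times$-translates of elements of $N_P$ (which, by axiom (P2), map to $Q^\times$-translates of elements of $N_Q$, hence lie in $\gen{N_Q}$), this descends to a ring homomorphism $\tilde f : R_P \to R_Q$. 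Because $Q$ is a partial field, the natural map $Q \to R_Q$ is injective, so I identify $Q$ with its image in $R_Q$.

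Next, I would define $\bar f : \Pi(P) \to Q$ as the restriction of $\tilde f$ to $\Pi(P) \subset R_P$. For $a \in P^\times$, the element $\pi_P(a)$ is sent by $\tilde f$ to the image of $f(a)$ in $R_Q$, which lies in $Q$; thus setting $\bar f(\pi_P(a)) = f(a)$ and $\bar f(0) = 0$ gives a well-defined multiplicative map, and the factorization $f = \bar f \circ \pi_P$ is then immediate. To see that $\bar f$ is a morphism of pastures, suppose $\bar a + \bar b + \bar c \in N_{\Pi(P)}$; by the definition of the nullset of $\Pi(P)$ this means $\bar a + \bar b + \bar c = 0$ in $R_P$, and applying $\tilde f$ yields $\bar f(\bar a) + \bar f(\bar b) + \bar f(\bar c) = 0$ in $R_Q$. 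Since $Q$ is a partial field, i.e.\ $N_Q = \gen{N_Q} \cap \Sym_3(Q)$, this forces $\bar f(\bar a) + \bar f(\bar b) + \bar f(\bar c) \in N_Q$, as required.

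Uniqueness is then immediate, since $\pi_P$ is surjective: any morphism $\bar f' : \Pi(P) \to Q$ satisfying $\bar f' \circ \pi_P = f$ must agree with $\bar f$ on $\pi_P(P) = \Pi(P)$. I expect the only mildly delicate step to be the descent of the $\Z$-linear extension of $f$ to a ring homomorphism $R_P \to R_Q$; once this has been checked, the remaining verifications reduce to bookkeeping with the definitions of $\pi_P$, $\Pi(P)$, and the nullset structure.
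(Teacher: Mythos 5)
Your proof is correct and follows essentially the same route as the paper: you use the functoriality of the universal ring construction to obtain $\tilde f : R_P \to R_Q$, the injectivity of $Q \to R_Q$ (since $Q$ is a partial field) to define $\bar f$ as a restriction, and the partial-field property $N_Q = \gen{N_Q} \cap \Sym_3(Q)$ to verify that $\bar f$ is a pasture morphism. The only cosmetic difference is that the paper phrases the last step in terms of the equality $R_{\Pi(P)} = R_P$ and the extension of $\bar f$ to a ring map, whereas you check the nullset condition directly, but the substance is identical.
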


\begin{proof}
 Provided that $\bar f$ exists, its uniqueness follows from the surjectivity of $\pi_P$. The existence can be verified as follows. The morphism $f:P\to Q$ induces a ring homomorphism $f_\Z:R_P\to R_Q$ such that the diagram
 \[
  \begin{tikzcd}[column sep=80pt, row sep=25pt]
   P \ar[r,"f"] \ar[d,"\iota_P"] & Q \ar[d,"\iota_Q"] \\
   R_P \ar[r,"f_\Z"] & R_Q
  \end{tikzcd}
 \]
 commutes, where the vertical arrows are the natural maps. Since $\iota_Q:Q\to R_Q$ is injective, as $Q$ is a partial field, and since the image of $\iota_P(P)\hookrightarrow R_P\to R_Q$ is contained in $\iota_Q(Q)$, we obtain a group homomorphism $\bar f:G\to Q^\times$ for $G=\iota_P(P)^\times$. By definition of the associated partial field $\Pi(P)=(G,R_P)$, its universal ring is $R_{\Pi(P)}=R_P$ and $\bar f$ extends to a homomorphism $\bar f_\Z=f_\Z:R_{\Pi(P)}\to R_Q$, which certifies that $\bar f:G\to Q^\times$ is indeed a morphism of partial fields. It follows from the definition of $\bar f$ that $f=\bar f\circ\pi_P$.
\end{proof}

We recall the definition of the \emph{Pendavingh--van Zwam lift $\Lift_{\pvz} P$} of a partial field $P=(G,R_P)$ from from \cite{Pendavingh-vanZwam10b}. The universal ring of $\Lift_{\pvz} P$ is
\[
 R_{\cL_{\pvz}P} \ = \ \Z[\,t_a^{\pm1}\mid a\in P^0\,]\,/\, I
\]
where $I$ is the ideal generated by the elements
 \begin{enumerate}[label={($\pvz$\arabic*)}]
  \item\label{PvZ1} $1+1$ if $-1 = 1$ in $P$;
  \item\label{PvZ2} $t_a \cdot t_{a^{-1}} - 1$ for all $a \in P^\fund$;
  \item\label{PvZ3} $t_a + t_b - 1$ whenever $a+b-1 \in N_P$;\addtocounter{enumi}{1}
  \item\label{PvZ5} $t_a t_b t_c - 1$ whenever $abc=1$ in $P$;
 \end{enumerate}
 and its unit group is the subgroup $G=\gen{-1,\,t_a\mid a\in P^0}$ of $R_{\cL_{\pvz}P}^\times$. It comes together with the canonical morphism
 \[  
  \begin{array}{cccc}
   \lambda_{\pvz,P}: & \Lift_{\pvz} P & \longrightarrow & P \\
                     & t_a & \longmapsto     & a.
  \end{array}
 \]
 
In general, the $\GRS$-lift and the $\PvZ$-lift of a partial field do not coincide; in particular, the $\GRS$-lift of a partial field is not a partial field in general; cf.\ \autoref{ex: GRS-lift of a partial field that is not a partial field}. However, we find the following relation between the two lifts.
 
\begin{prop}\label{prop: PvZ-lift as quotient of GRS-lift}
 If $P$ is a partial field, then $\Lift_{\grs} P$ is a mock partial field, $\Pi(\Lift_{\grs} P) \simeq \Lift_{\pvz} P$ and $\lambda_{\grs,P}=\lambda_{\pvz,P}\circ\pi_{\Lift_\grs P}$, i.e.\
 \[
  \begin{tikzcd}[column sep=80pt, row sep=25pt]
   \Lift_{\grs} P \ar[r,"\pi_{\Lift_\grs P}"] \ar[rd,"\lambda_{\grs,P}"'] & \Pi(\Lift_{\grs} P) \ar[r,"\sim"] &  \Lift_{\pvz} P \ar[dl,"\lambda_{\pvz,P}"] \\
                                                                         & P
  \end{tikzcd}
 \]
 commutes.
\end{prop}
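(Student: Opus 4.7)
The plan is to produce mutually inverse morphisms between $\Pi(\Lift_{\grs} P)$ and $\Lift_{\pvz} P$ and then verify the stated triangle on generators. Since $\lambda_{\grs,P}: \Lift_{\grs} P \to P$ lands in a partial field, \autoref{lem:MPF} gives $\Lift_{\grs} P \in \MPF$, so $\Pi(\Lift_{\grs} P)$ is defined.

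First I construct $\phi: \Lift_{\grs} P \to \Lift_{\pvz} P$ by sending each $t_a$ to the corresponding $t_a$. The relations \ref{GRS1}, \ref{GRS2}, \ref{GRS3}, and \ref{GRS5} coincide verbatim with \ref{PvZ1}, \ref{PvZ2}, \ref{PvZ3}, and \ref{PvZ5}, so the only nontrivial check is \ref{GRS4}. Given $a+b^{-1}=1$ and $abc=-1$ in $P$, a direct computation in the partial field $P$ yields $c = 1 - a^{-1}$ and $c^{-1} = a/(a-1) = 1 - b$, so that $(a^{-1},c)$ and $(b,c^{-1})$ are also fundamental pairs (together with $(a,b^{-1})$ they constitute the $D_3$-orbit generated by $(a,b^{-1})$). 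Applying \ref{PvZ3} to each of these three pairs and clearing denominators in the universal ring $R_{\Lift_{\pvz} P}$ yields $t_a t_b = t_b - 1$ and $t_b t_c = t_c - 1$, hence
\[
 t_a t_b t_c \ = \ (t_b - 1)\, t_c \ = \ (t_c - 1) - t_c \ = \ -1,
\]
establishing \ref{GRS4}. Thus $\phi$ is well-defined, and by \autoref{lem:MPF reflection} it factors uniquely as $\phi = \bar\phi \circ \pi_{\Lift_\grs P}$ with $\bar\phi: \Pi(\Lift_{\grs} P) \to \Lift_{\pvz} P$.

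For the inverse, let $\bar t_a := \pi_{\Lift_\grs P}(t_a)$ in $\Pi(\Lift_{\grs} P)$. These images satisfy \ref{PvZ1}, \ref{PvZ2}, \ref{PvZ3}, and \ref{PvZ5} because the corresponding \ref{GRS}-relations hold in $\Lift_{\grs} P$. Since $\Pi(\Lift_{\grs} P)$ is a partial field, the assignment $t_a \mapsto \bar t_a$ extends to a ring homomorphism $R_{\Lift_{\pvz} P} \to R_{\Lift_\grs P}$ which sends the unit group of $\Lift_{\pvz}P$ into that of $\Pi(\Lift_\grs P)$, hence defines a pasture morphism $\psi: \Lift_{\pvz} P \to \Pi(\Lift_{\grs} P)$ with $\psi(t_a) = \bar t_a$. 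Both $\bar\phi \circ \psi$ and $\psi \circ \bar\phi$ fix the generating sets $\{-1\} \cup \{t_a\}$ of the respective targets and are therefore identities, so $\bar\phi$ is an isomorphism. Commutativity of the triangle reduces to $\lambda_{\pvz,P}(\bar\phi(\pi_{\Lift_\grs P}(t_a))) = \lambda_{\pvz,P}(t_a) = a = \lambda_{\grs,P}(t_a)$, which is immediate on generators.

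The main obstacle is the verification of \ref{GRS4} inside $\Lift_{\pvz} P$: one must recognize that the hypotheses $a+b^{-1}=1$ and $abc=-1$ automatically force $(a^{-1},c)$ and $(b,c^{-1})$ to be fundamental pairs, so that the \emph{three} \ref{PvZ3}-relations from the full hexagon become available. It is the conjunction of these three two-term additive relations that yields the desired three-term multiplicative identity in the universal ring, thereby ``explaining'' why \ref{GRS4} is redundant in the partial field setting but must be imposed separately in the general pasture setting.
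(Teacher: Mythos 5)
Your proof is correct and takes essentially the same route as the paper's: both argue via \autoref{lem:MPF}, a term-by-term matching of the GRS- and PvZ-relations, and a verification that \ref{GRS4} becomes redundant in $R_{\Lift_\pvz P}$ once $P$ is a partial field, which you carry out via the identities $t_at_b=t_b-1$ and $t_bt_c=t_c-1$ in place of the paper's equivalent computation $t_a\cdot(1-t_a)^{-1}\cdot(t_a-1)t_a^{-1}=-1$. You additionally spell out the mutually inverse morphisms $\bar\phi$ and $\psi$, which the paper compresses into \emph{``follows easily from the definition of the associated partial field''}; this is the same argument, just written out more explicitly.
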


\begin{proof}
The fact that $\Lift_{\grs} P$ is a mock partial field follows by \autoref{lem:MPF} from the existence of a morphism $\Lift_{\grs} P \to P$.
 The remaining statements follow easily from the definition of the associated partial field and a comparison of the defining relations of the $\GRS$-lift $\cL_{\grs}P$ with the corresponding relations of the $\PvZ$-lift $\cL_{\pvz}P$, with the caveat that the definition of $\cL_{\pvz}P$ does not list an analogue of \ref{GRS4} (note that we numbered the other axioms coherently, i.e.\ \ref{GRS1} corresponds to \ref{PvZ1}, and so forth, but there is no relation ($\pvz$4).)
 
The reason the proposition is valid despite the caveat is that the relations of type \ref{GRS4} are implied by the other relations when $P$ is a partial field. Indeed, given fundamental elements $a,b,c\in P^\fund$ with $abc=-1$ and $a+b^{-1}-1=0$, we conclude that $c=-a^{-1}b^{-1}$ and thus $c+a^{-1}-1=-a^{-1}(a+b^{-1}-1)=0$. Thus we have $t_a+t_b^{-1}-1=0$ and $t_c+t_a^{-1}-1=0$ in $R_{\cL_\pvz P}$, using $t_{a^{-1}}=t_a^{-1}$ by \ref{PvZ1}, which yields
 \[
  t_b \ = \ \tfrac{1}{1-t_a}, \qquad t_c \ =\ 1-t_{a}^{-1} \ = \ \tfrac{t_a-1}{t_a} \qquad \text{and} \qquad t_at_bt_c \ = \ t_a\cdot\tfrac{1}{1-t_a}\cdot \tfrac{t_a-1}{t_a} \ = \ -1,
 \]
 as desired. The equality $\lambda_{\grs,P}=\lambda_{\pvz,P}\circ\pi_{\Lift_\grs P}$ follows at once from the definition of these morphisms.
\end{proof}

\begin{lemma} \label{lemma: GRSPvZcomparison}
Let $P$ be a pasture and $\pi_{\Lift_\grs P}:\Lift_\grs P\to\Lift_\pvz P$ the quotient map.
\begin{enumerate}
\item\label{GRSPvZ1} The canonical map $\hat\pi_{\Lift_\grs P}:\Lift_\grs P\to\Lift_\grs\Lift_\pvz(P)$ with $\pi_{\Lift_\grs P}=\lambda_{\grs,\Lift_\pvz P}\circ\hat\pi_{\Lift_\grs P}$ is an isomorphism.
\item\label{GRSPvZ2} If $\Lift_{\grs} P$ is a partial field, then $\Lift_{\grs} P = \Lift_{\pvz} P$.
\end{enumerate}
\end{lemma}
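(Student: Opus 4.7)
For part (2), I would appeal directly to the reflection property of $\Pi: \MPF \to \PartFields$ from \autoref{lem:MPF reflection}: if $\Lift_{\grs} P$ is a partial field, then it already lies in the reflective subcategory $\PartFields \subset \MPF$, so the unit of the reflection at this object is an isomorphism. Since $\Lift_{\pvz} P$ is identified with $\Pi(\Lift_{\grs} P)$ by \autoref{prop: PvZ-lift as quotient of GRS-lift}, the quotient map $\pi_{\Lift_{\grs} P}$ is precisely this unit. Hence $\pi_{\Lift_{\grs} P}$ is an isomorphism, and $\Lift_{\grs} P = \Lift_{\pvz} P$.

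For part (1), the plan is to construct a two-sided inverse of $\hat\pi_{\Lift_{\grs} P}$ using the universal property of the $\GRS$-coreflection (\autoref{prop: universal property of GRS-lifts}). Note that $\hat\pi_{\Lift_{\grs} P}$ itself arises by applying this universal property to $\pi_{\Lift_{\grs} P}$, using that $\Lift_{\grs} P$ is a $\GRS$-lift by idempotence of the coreflection. To build a candidate inverse $\psi: \Lift_{\grs} \Lift_{\pvz} P \to \Lift_{\grs} P$, I would apply the same universal property to the composite $\lambda_{\pvz, P} \circ \lambda_{\grs, \Lift_{\pvz} P}: \Lift_{\grs} \Lift_{\pvz} P \to P$, obtaining the unique $\psi$ with $\lambda_{\grs, P} \circ \psi = \lambda_{\pvz, P} \circ \lambda_{\grs, \Lift_{\pvz} P}$.

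The identity $\psi \circ \hat\pi_{\Lift_{\grs} P} = \id$ then follows from the computation
\[
 \lambda_{\grs, P} \circ \psi \circ \hat\pi_{\Lift_{\grs} P} \ = \ \lambda_{\pvz, P} \circ \lambda_{\grs, \Lift_{\pvz} P} \circ \hat\pi_{\Lift_{\grs} P} \ = \ \lambda_{\pvz, P} \circ \pi_{\Lift_{\grs} P} \ = \ \lambda_{\grs, P}
\]
(the last equality being \autoref{prop: PvZ-lift as quotient of GRS-lift}), combined with the uniqueness clause in the universal property of $\lambda_{\grs, P}$ applied to $\lambda_{\grs, P}$ factored through itself. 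The main obstacle will be verifying the other direction $\hat\pi_{\Lift_{\grs} P} \circ \psi = \id$. Post-composing with $\lambda_{\grs, \Lift_{\pvz} P}$ and invoking the corresponding uniqueness, this reduces to the identity $\pi_{\Lift_{\grs} P} \circ \psi = \lambda_{\grs, \Lift_{\pvz} P}$. Both sides agree after further composition with $\lambda_{\pvz, P}$, so the real task is to cancel $\lambda_{\pvz, P}$ on the left; I expect to accomplish this by exploiting that the fundamental elements of $\Lift_{\pvz} P$ are in canonical bijection with those of $P$ (mirroring the corresponding property of $\lambda_{\grs, P}^\fund$), which reduces the desired equality to an equality on generators where both sides coincide by construction.
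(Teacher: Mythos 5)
Both parts follow the paper's own route. Part (2) is exactly the paper's one-liner: $\Lift_\pvz P=\Pi(\Lift_\grs P)$ by \autoref{prop: PvZ-lift as quotient of GRS-lift}, and $\Pi$ fixes partial fields. For part (1), your $\psi$ is precisely $\Lift_\grs\lambda_{\pvz,P}$, and the left-inverse identity $\psi\circ\hat\pi_{\Lift_\grs P}=\id$ is obtained just as the paper obtains it, via functoriality of the idempotent coreflection (the paper packages this as applying $\Lift_\grs$ to the commutative square, turning both $\lambda_\grs$-verticals into identities). The right-inverse step you flag as the main obstacle is indeed the one place requiring an extra thought, and your reduction to $\pi_{\Lift_\grs P}\circ\psi=\lambda_{\grs,\Lift_\pvz P}$ followed by a check on the generators $t_b$ for $b\in(\Lift_\pvz P)^\fund$ does close it: $\psi(t_b)=t_{\lambda_{\pvz,P}(b)}$ and $\pi_{\Lift_\grs P}\bigl(t_{\lambda_{\pvz,P}(b)}\bigr)=t_{\lambda_{\pvz,P}(b)}=b$, the last equality because $\lambda_{\pvz,P}$ restricts to a bijection on fundamental elements sending $t_a\mapsto a$. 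A slightly quicker finish, equivalent in content: $\hat\pi_{\Lift_\grs P}$ sends the generators $t_a$ of $\Lift_\grs P$ onto the generators $t_{t_a}$ of $\Lift_\grs\Lift_\pvz P$, hence is surjective, and a split monomorphism that is also an epimorphism is automatically an isomorphism.
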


\begin{proof}
 As an idempotent endofunctor on $\Pastures$, $\Lift_\grs$ is the identity on $\Lifts_{\grs}$, and therefore applying $\Lift_\grs$ to the commutative diagram
 \[
  \begin{tikzcd}[column sep=80pt, row sep=25pt]
   \Lift_\grs P \ar[r,"\hat\pi_{\Lift_\grs P}"] \ar[dr,"\pi_{\Lift_\grs P}"] \ar[d,"\lambda_{\grs,P}"'] & \Lift_\grs\Lift_\pvz P \ar[d,"\lambda_{\grs,\Lift_\pvz P}"] \\
   P & \Lift_\pvz P \ar[l,"\lambda_{\pvz,P}"]
  \end{tikzcd}
  \quad \text{yields} \quad
  \begin{tikzcd}[column sep=80pt, row sep=25pt]
   \Lift_\grs P \ar[r,"\hat\pi_{\Lift_\grs P}"] \ar[d,"\id"'] & \Lift_\grs\Lift_\pvz P \ar[d,"\id"] \\
   \Lift_\grs P  & \Lift_\grs\Lift_\pvz P \ar[l,"\Lift_\grs\lambda_{\pvz,P}"]
  \end{tikzcd}
 \]
 which shows that $\hat\pi_{\Lift_\grs P}$ is an isomorphism with inverse $\Lift_\grs\lambda_{\pvz,P}$, establishing \eqref{GRSPvZ1}.
 
 Claim \eqref{GRSPvZ2} follows at once from \autoref{prop: PvZ-lift as quotient of GRS-lift} and the fact that $\Pi(P)=P$ for a partial field $P$.
\end{proof}

As a formal consequence of these results, we obtain a proof of Conjecture 6.7 in \cite{Pendavingh-vanZwam10b}:

\begin{cor} \label{cor:idempotence conjecture}
 $\Lift_{\pvz}$ is an idempotent functor from the category of partial fields to itself, i.e., $\Lift_{\pvz}(\Lift_{\pvz}(P))= \Lift_{\pvz}(P)$ for every partial field $P$.
\end{cor}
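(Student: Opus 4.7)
The strategy is to reduce the idempotence of $\Lift_{\pvz}$ to the already-established idempotence of $\Lift_{\grs}$ (which is automatic from the coreflection property of \autoref{prop: universal property of GRS-lifts}), using \autoref{prop: PvZ-lift as quotient of GRS-lift} to pass between the two constructions via the reflection $\Pi:\MPF\to\PartFields$ of \autoref{lem:MPF reflection}.

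First I would fix a partial field $P$ and use \autoref{prop: PvZ-lift as quotient of GRS-lift} to identify
\[
 \Lift_{\pvz}(P) \ \simeq \ \Pi(\Lift_{\grs} P),
\]
and then apply the same proposition to the partial field $\Lift_{\pvz}(P)$, obtaining
\[
 \Lift_{\pvz}(\Lift_{\pvz}(P)) \ \simeq \ \Pi(\Lift_{\grs}(\Lift_{\pvz} P)).
\]
At this point the task is reduced to identifying $\Pi(\Lift_{\grs}(\Lift_{\pvz} P))$ with $\Pi(\Lift_{\grs} P)$, which is exactly what \autoref{lemma: GRSPvZcomparison}\eqref{GRSPvZ1} gives on the level of pastures: the canonical morphism $\hat\pi_{\Lift_\grs P}:\Lift_{\grs} P\to\Lift_{\grs}(\Lift_{\pvz} P)$ is an isomorphism (both are fixed points of $\Lift_\grs$, and the map is forced to be the identity after applying $\Lift_\grs$).

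Applying the functor $\Pi$ to this isomorphism yields
\[
 \Pi(\Lift_{\grs} P) \ \simeq \ \Pi(\Lift_{\grs}(\Lift_{\pvz} P)),
\]
and concatenating the three displayed isomorphisms gives the desired identification $\Lift_{\pvz}(\Lift_{\pvz}(P)) \simeq \Lift_{\pvz}(P)$. To upgrade this from an equality to an idempotence statement for the functor, I would then note that all the isomorphisms involved are natural in $P$, so the composite $\Lift_{\pvz}\circ\Lift_{\pvz}\Rightarrow\Lift_{\pvz}$ is a natural isomorphism — equivalently, the unit $\Lift_{\pvz}(P)\to\Lift_{\pvz}(\Lift_{\pvz}(P))$ induced by $\lambda_{\pvz}$ is an isomorphism.

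I do not expect serious obstacles: the whole argument is a diagram chase using two facts already in hand (the coreflection property of $\Lift_{\grs}$, and the quotient description of $\Lift_{\pvz}$ from $\Lift_{\grs}$ via the reflection $\Pi$). The only point that requires a little care is confirming that $\Lift_{\pvz}(P)$ lies in $\MPF$ so that $\Pi$ can be applied a second time; this follows from \autoref{lem:MPF} because there is a morphism $\lambda_{\pvz,P}:\Lift_{\pvz}(P)\to P$ landing in a partial field.
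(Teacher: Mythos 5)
Your proof is correct and follows essentially the same route as the paper's: the paper also concatenates the isomorphism $\Lift_\pvz \Lift_\pvz P\simeq\Pi(\cL_\grs\Lift_\pvz P)\simeq\Pi(\cL_\grs P)\simeq\Lift_\pvz P$ using \autoref{prop: PvZ-lift as quotient of GRS-lift} and \autoref{lemma: GRSPvZcomparison}. Your additional remark about verifying that $\Lift_{\pvz}(P)\in\MPF$ before applying $\Pi$ again is a fine sanity check, though in fact $\Pi$ is only needed on pastures that are already $\GRS$-lifts of partial fields and these are always mock partial fields by \autoref{lem:MPF}.
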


\begin{proof}
 This follows at once from the canonical isomorphisms in \autoref{prop: PvZ-lift as quotient of GRS-lift} and \autoref{lemma: GRSPvZcomparison}: $\Lift_\pvz \Lift_\pvz P\simeq\Pi(\cL_\grs\Lift_\pvz P)\simeq\Pi(\cL_\grs P)\simeq\Lift_\pvz P$.
\end{proof}

Moreover, we find a new proof of Pendavingh-van Zwam's lift theorem for partial fields from \cite{Pendavingh-vanZwam10b}. Note that Pendavingh and van Zwam noted already in \cite[end of Section 4.1]{Pendavingh-vanZwam10b} that it should be possible to give an alternate proof of the lift theorem for partial fields by making use of Tutte's homotopy theorem. Since our construction of GRS-lifts relies heavily on the homotopy theorem, our new proof confirms their expectation.

\begin{thm}[$\PvZ$-lift theorem for matroid representations over partial fields]\label{thm: PvZ-lift theorem}
 For every partial field $P$ and every matroid $M$, every projective equivalence class of $P$-representations of $M$ lifts uniquely to $\Lift_{\pvz}(P)$.
\end{thm}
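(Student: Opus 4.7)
The plan is to deduce this from the \textup{GRS}-lift theorem (\autoref{thm: GRS-lifts}) via the comparison results \autoref{prop: PvZ-lift as quotient of GRS-lift} and \autoref{lemma: GRSPvZcomparison}. Since the foundation $F_M$ represents the functor $\cX_M(-)$, the statement is equivalent to asserting that $\lambda_{\pvz,P}$ induces a bijection $\Hom(F_M,\Lift_{\pvz}P)\to\Hom(F_M,P)$. So I would reformulate the claim in terms of morphisms out of $F_M$ and work entirely at that level.

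For existence, given a morphism $\alpha:F_M\to P$ corresponding to a projective equivalence class, \autoref{thm: GRS-lifts} provides a morphism $\tilde\alpha:F_M\to\Lift_{\grs}P$ with $\lambda_{\grs,P}\circ\tilde\alpha=\alpha$. Composing with the quotient $\pi_{\Lift_\grs P}:\Lift_{\grs}P\to\Lift_{\pvz}P$ of \autoref{prop: PvZ-lift as quotient of GRS-lift} yields $\hat\alpha:=\pi_{\Lift_\grs P}\circ\tilde\alpha:F_M\to\Lift_{\pvz}P$, which lifts $\alpha$ because $\lambda_{\pvz,P}\circ\pi_{\Lift_\grs P}=\lambda_{\grs,P}$.

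For uniqueness, suppose $\hat\alpha_1,\hat\alpha_2:F_M\to\Lift_{\pvz}P$ are two lifts of $\alpha$ along $\lambda_{\pvz,P}$. Applying \autoref{thm: GRS-lifts} to the pasture $\Lift_{\pvz}P$ (in place of $P$), each $\hat\alpha_i$ lifts uniquely to $\beta_i:F_M\to\Lift_{\grs}\Lift_{\pvz}P$ with $\lambda_{\grs,\Lift_\pvz P}\circ\beta_i=\hat\alpha_i$. By \autoref{lemma: GRSPvZcomparison}\eqref{GRSPvZ1}, the canonical map $\hat\pi_{\Lift_\grs P}:\Lift_{\grs}P\to\Lift_{\grs}\Lift_{\pvz}P$ is an isomorphism, so I can transport each $\beta_i$ back to a morphism $\gamma_i:F_M\to\Lift_{\grs}P$. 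A diagram chase, using $\lambda_{\grs,P}=\lambda_{\pvz,P}\circ\pi_{\Lift_\grs P}$ from \autoref{prop: PvZ-lift as quotient of GRS-lift} together with the identity $\pi_{\Lift_\grs P}=\lambda_{\grs,\Lift_\pvz P}\circ\hat\pi_{\Lift_\grs P}$ from \autoref{lemma: GRSPvZcomparison}\eqref{GRSPvZ1}, shows $\lambda_{\grs,P}\circ\gamma_i=\alpha$ for $i=1,2$. The uniqueness assertion in \autoref{thm: GRS-lifts} then forces $\gamma_1=\gamma_2$, hence $\beta_1=\beta_2$, and composing with $\lambda_{\grs,\Lift_\pvz P}$ gives $\hat\alpha_1=\hat\alpha_2$.

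The main obstacle is purely bookkeeping, namely making sure the various canonical maps fit into a single commutative diagram so the above chase is genuinely valid; once the comparison between $\Lift_{\grs}$ and $\Lift_{\pvz}$ encoded in \autoref{prop: PvZ-lift as quotient of GRS-lift} and \autoref{lemma: GRSPvZcomparison} is in place, no new matroid-theoretic input is needed, and the theorem reduces to formal consequences of the universal property of coreflections together with the fact that $F_M\in\Lifts_{\grs}$.
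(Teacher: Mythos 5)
Your proposal is correct and follows essentially the same route as the paper: existence by composing the GRS-lift with $\pi_{\Lift_\grs P}$, and uniqueness by lifting to $\Lift_\grs\Lift_\pvz P$, transporting across the isomorphism from \autoref{lemma: GRSPvZcomparison}\eqref{GRSPvZ1}, and invoking the uniqueness in \autoref{thm: GRS-lifts}. The only cosmetic difference is that you compare two arbitrary lifts, whereas the paper shows any lift equals the constructed one $\bar\alpha$.
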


\begin{proof}
 Since the foundation $F_M$ represents rescaling classes, i.e.\ $\cX_M(P)=\Hom(F_M,P)$, the claim of the theorem amounts to the existence and uniqueness of a morphism $\bar\alpha:F_M\to P$ with $\alpha=\lambda_{\pvz,P}$ for any given $\alpha:F_M\to P$.
 
 Fix $\alpha:F_M\to P$. The existence of $\bar\alpha$ can be established as follows. By \autoref{thm: GRS-lifts}, there is a unique $\hat\alpha:F_M\to\Lift_\grs P$ such that $\alpha=\lambda_{\grs,P}\circ\hat\alpha$. If we define $\bar\alpha=\pi_{\Lift_\grs P}\circ\hat\alpha$, then the commutativity of the diagram
 \[
  \begin{tikzcd}[column sep=80pt, row sep=15pt]
       & \Lift_\pvz P \ar[d,"\lambda_{\pvz,P}" pos=0.4] & \Lift_\grs P \ar[l,"\pi_{\Lift_\grs P}" pos=0.5] \ar[dl,"\lambda_{\grs,P}"]\\
   F_M \ar[r,"\alpha"] \ar[ur,dashed,"\bar\alpha" pos=0.6] \ar[urr,bend left=30,dashed,"\hat\alpha"] & P  
  \end{tikzcd}
 \]
 yields $\alpha=\lambda_{\grs,P}\circ\hat\alpha=\lambda_{\pvz,P}\circ\pi_{\Lift_\grs P}\circ\hat\alpha=\lambda_{\pvz,P}\circ\bar\alpha$, as desired.

 In order to establish uniqueness, we consider a morphism $\beta:F_M\to \Lift_\pvz P$ with $\alpha=\lambda_{\pvz,P}\circ\beta$. Let $\hat\beta:F_M\to\Lift_\cG\Lift_\pvz P$ be the unique morphism with $\beta=\lambda_{\grs,\Lift_\pvz P}\circ\hat\beta$, as given by \autoref{thm: GRS-lifts}. Jointly with the isomorphism $\Lift_\grs\lambda_{\pvz,P}:\Lift_\grs\Lift_\pvz P\to\Lift_\grs P$, this yields the commutative diagram
 \[
  \begin{tikzcd}[column sep=80pt, row sep=15pt]
     &                                                & \Lift_\grs\Lift_\pvz P \ar[dl,"\lambda_{\grs,\Lift_\pvz P}" pos=0.3] \ar[d,"\Lift_\grs\lambda_{\pvz,P}"]  \\
     & \Lift_\pvz P \ar[d,"\lambda_{\pvz,P}" pos=0.4] & \Lift_\grs P \ar[l,"\pi_{\Lift_\grs P}" pos=0.5] \ar[dl,"\lambda_{\grs,P}"]\\
   F_M \ar[r,"\alpha"] \ar[ur,"\beta" pos=0.6] \ar[uurr,bend left=20,dashed,"\hat\beta" pos=0.6] & P  
  \end{tikzcd}
 \]
 and the equality $\beta=\lambda_{\grs,\Lift_\pvz P}\circ\hat\beta=\pi_{\Lift_\grs P}\circ\Lift_\grs\lambda_{\pvz,P}\circ\hat\beta=\pi_{\Lift_\grs P}\circ\hat\alpha=\bar\alpha$, using the uniqueness of the morphism $\Lift_\grs\lambda_{\pvz,P}\circ\hat\beta=\hat\alpha:F_M\to\Lift_\grs P$ with $\lambda_{\grs,P}\circ\Lift_\grs\lambda_{\pvz,P}\circ\hat\beta=\alpha=\lambda_{\grs,P}\circ\hat\alpha$. This completes the proof.
\end{proof}

\begin{ex}\label{ex: GRS-lift of a partial field that is not a partial field}
 The following is an example of a partial field $P$ whose $\GRS$-lift is not a partial field. Its universal ring is
 \[
  R_P \ = \ \Z[a^{\pm1},b^{\pm1},c^{\pm1},d^{\pm1},e^{\pm1},f^{\pm1}] \, / \, I := \gen{a+b-1,\, c-d-1,\, be-f-1,\, e-cf-1}
 \]
 and its unit group is the subgroup $G=\gen{-1,a,b,c,d,e,f}$ of $R^\times$. This is indeed a partial field, i.e.\ $R_P\neq\{0\}$, since the association
 \[
  \varphi(a)=3,\quad \varphi(b)=-2,\quad \varphi(c)=-2,\quad \varphi(d)=-3,\quad \varphi(e)=-1,\quad \varphi(f)=1
 \]
 extends to a ring homomorphism $\varphi:R_P\to\Z$. We find that $I$ contains
 \[
  e(a+b-1)+f(c-d-1)+(-1)(be-f-1)+(e-cf-1) \ = \ ae-df.
 \]
 Thus $ae=df$ in $G$, but neither $ae$ nor $df$ is a fundamental element. Therefore $ae$ and $df$ are distinct elements in $\cL_\grs P$. Since $R_{\Lift_\grs P}=R_{\Lift_\pvz P}=R_P$, this shows that the map $\Lift_\grs P\to R_{\Lift_\grs P}$ is not injective, and thus $\Lift_\grs P$ is not a partial field.
\end{ex}


\subsection{The lift theorem for binary matroids}
\label{subsection: The lift theorem for binary matroids}

In this section, we explain what we mean by a lower approximation to $\cL:\Pastures\to\Lifts$ with the example of binary lifts. This might be seen as the easiest non-trivial example of this nature, and serves as a prelude to the more involved constructions of ternary and $\WLUM$ lifts in \autoref{section: Ternary lifts}.

Let $\Lifts_{\binary}$ be the full subcategory of $\Pastures$ whose objects are all pastures that are isomorphic to either $\Funpm$ or $\F_2$, which we call \emph{binary lifts}. Given a pasture $P$, we define $\cL_{\binary} P$ to be $\F_2$ if $-1=1$, and $\Funpm$ if $-1\neq 1$ in $P$. In either case there is a unique map $\lambda_P=\lambda_{\binary,P}:\cL_{\binary}P\to P$. We call $\cL_{\binary}$ together with $\lambda_{\binary,P}$ the \emph{binary lift of $P$}.

\begin{prop}\label{prop: universal property of binary lifts}
 Let $\alpha:L\to P$ be a pasture morphism from a binary lift $L$ to a pasture $P$. Then there is a unique morphism $\hat \alpha:L\to\cL_{\binary}P$ such that $\alpha=\lambda_{\binary,P}\circ\hat \alpha$. In other words, $\Lifts_{\binary}$ is a coreflective subcategory of $\Pastures$ whose coreflection is defined by $\cL_{\binary}$.
\end{prop}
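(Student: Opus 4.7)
The plan is to split into two cases according to whether $L\simeq\Funpm$ or $L\simeq\F_2$.

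If $L\simeq\Funpm$, then since $\Funpm$ is the initial object of $\Pastures$ (as noted in the Background section), there is exactly one morphism from $\Funpm$ to any pasture. In particular, there is a unique morphism $\hat\alpha:\Funpm\to\cL_{\binary}P$, and the composition $\lambda_{\binary,P}\circ\hat\alpha$ must coincide with the unique morphism $\alpha:\Funpm\to P$. So in this case the existence and uniqueness of $\hat\alpha$ are both immediate.

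If $L\simeq\F_2$, then $-1=1$ in $L$. Since $\alpha$ is a morphism of pastures, $\alpha(-1)=-1$ and $\alpha(1)=1$, and these must be equal in $P$; hence $-1=1$ in $P$, so by definition $\cL_{\binary}P=\F_2$. Now I would observe that the only possible pasture morphism $\F_2\to\F_2$ is the identity (any morphism must send $0\mapsto 0$ and $1\mapsto 1$, and these exhaust $\F_2$), so $\hat\alpha=\id$ is forced. It remains to verify that $\lambda_{\binary,P}\circ\id=\alpha$; but both $\lambda_{\binary,P}$ and $\alpha$ are pasture morphisms $\F_2\to P$, and any such morphism is uniquely determined by $0\mapsto 0$ and $1\mapsto 1$, so they coincide. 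This gives existence and uniqueness in this case as well.

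The statement then follows since a coreflection is by definition an adjoint to the inclusion, and the universal property just established is precisely the statement that $\cL_{\binary}$ together with the counit $\lambda_{\binary,P}$ realizes $\Lifts_{\binary}$ as coreflective. There is no genuine obstacle here — the argument is entirely formal, relying only on the initiality of $\Funpm$ and the smallness of $\F_2$; the content is essentially the observation that the existence of a morphism from $\F_2$ detects whether $-1=1$.
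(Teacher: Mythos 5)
Your proof is correct and follows essentially the same approach as the paper's: split according to whether $L\simeq\Funpm$ or $L\simeq\F_2$, use initiality of $\Funpm$ in the first case, and in the second case observe that $-1=1$ propagates along $\alpha$ to force $\cL_{\binary}P=\F_2$, with uniqueness coming from the fact that there is at most one morphism out of $\Funpm$ or $\F_2$ into any pasture.
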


\begin{proof}
 Provided there exists a morphism $\hat\alpha:L\to\Lift_\binary P$, it is unique and satisfies $\alpha=\lambda_{\binary,P}\circ\hat\alpha$ since there is at most one morphism from either $\Funpm$ and $\F_2$ into any other pasture. If $L\simeq\Funpm$, then $L$ is initial in $\Pastures$ and the existence of $\hat\alpha$ is clear. If $L\simeq\F_2$, then $-1=1$ in $L$ and therefore $-1=\alpha(-1)=\alpha(1)=1$ in $P$. Thus $\Lift_\binary P=\F_2$, which establishes the existence of $\hat\alpha:L\to\Lift_\binary P$.
\end{proof}

\begin{thm}[Lift theorem for binary matroids]\label{thm: lift theorem for binary matroids}
 Let $M$ be a binary matroid and $P$ a pasture. Then every $P$-representation of $M$ lifts uniquely up to rescaling equivalence along $\lambda_{\binary,P}:\cL_{\binary}P\to P$.
\end{thm}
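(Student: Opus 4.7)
The plan is to mirror the structure of the proof of the $\GRS$-lift theorem (\autoref{thm: GRS-lifts}), replacing the role of \autoref{prop: universal property of GRS-lifts} by \autoref{prop: universal property of binary lifts}. Given a $P$-representation $\Delta:E^r\to P$ of $M$, the defining property of the foundation $F_M$ (\autoref{thm: characterizing property of the foundation}) produces a morphism $\alpha:F_M\to P$, and conversely every such morphism corresponds to a rescaling class in $\cX_M(P)$. Likewise, rescaling classes over $\cL_\binary P$ correspond to morphisms $F_M\to\cL_\binary P$. So the content of the theorem reduces to showing that every morphism $\alpha:F_M\to P$ factors uniquely through $\lambda_{\binary,P}:\cL_\binary P\to P$.

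Given \autoref{prop: universal property of binary lifts}, this factorisation exists and is unique as soon as $F_M$ lies in $\Lifts_\binary$. Thus the key step I will carry out is:
\begin{center}
\emph{For every binary matroid $M$, the foundation $F_M$ is isomorphic to $\Funpm$ or to $\F_2$.}
\end{center}
This is the one nontrivial ingredient and is the main obstacle. However, it is a known consequence of the classification of foundations established in \cite{Baker-Lorscheid20}: the foundation of a binary matroid has no nontrivial fundamental elements or cross ratios, because in any $P$-representation of a binary matroid every cross ratio is forced to equal $1$ and the $3$-term Pl\"ucker relations collapse to the relation $1+1\in N_P$. Hence $F_M$ is generated (as an $\Funpm$-algebra) by the empty set, subject possibly to the relation $1+1$. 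This forces $F_M\cong \Funpm$ when $M$ is regular, and $F_M\cong \F_2$ when $M$ is binary but not regular.

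With $F_M\in\Lifts_\binary$ established, \autoref{prop: universal property of binary lifts} applied to $\alpha:F_M\to P$ yields a unique $\hat\alpha:F_M\to\cL_\binary P$ with $\alpha=\lambda_{\binary,P}\circ\hat\alpha$. Translating back via $\cX_M(\cL_\binary P)=\Hom(F_M,\cL_\binary P)$ gives a rescaling class $[\hat\Delta]$ over $\cL_\binary P$, uniquely determined by $[\Delta]$, such that $(\lambda_{\binary,P})_\ast[\hat\Delta]=[\Delta]$. This is exactly the asserted unique lift up to rescaling equivalence, completing the proof.
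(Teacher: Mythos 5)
Your proof takes essentially the same route as the paper's: reduce to factoring morphisms $\alpha:F_M\to P$ through $\lambda_{\binary,P}$ using $\cX_M(P)=\Hom(F_M,P)$, invoke the classification $F_M\in\{\Funpm,\F_2\}$ for binary $M$, and apply \autoref{prop: universal property of binary lifts}. The paper simply cites \cite[Thm.\ 7.32]{Baker-Lorscheid18} for the classification step rather than sketching the cross-ratio heuristic you give, so aside from that (and the fact that the correct reference is \cite{Baker-Lorscheid18}, not \cite{Baker-Lorscheid20}), your argument matches.
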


\begin{proof}
 Since the projective equivalence classes of $M$ over $P$ correspond bijectively to morphisms from the foundation $F_M$ of $M$ into $P$, the assertion of the theorem amounts to claim that for every morphism $\alpha:F_M\to P$, there is a unique morphism $\hat\alpha:F_M\to \Lift_\binary P$ with $\alpha=\lambda_{\binary,P}\circ\hat \alpha$. By \cite[Thm.\ 7.32]{Baker-Lorscheid18}, the foundation of a binary matroid $M$ is isomorphic to one of $\Funpm$ and $\F_2$. Thus the latter claim follows at once from \autoref{prop: universal property of binary lifts}.
\end{proof}


\section{Hexagons}
\label{section: Hexagons}

The construction of ternary and $\WLUM$-lifts is based on the notion of a hexagon in a pasture. In this section we discuss hexagons and their types, the relation between hexagons and fundamental pairs, and the behavior of hexagons in partial fields.

\subsection{Definitions}
\label{subsection: Definitions}

Let $P$ be a pasture. An \emph{ordered hexagon in $P$} is a $6$-tuple $(a,b,c,d,e,f)$ of elements $a,b,c,d,e,f\in P$ that satisfy the relations
 \begin{align*}
  a+b \ &= \ 1, & ac \ &= \ 1, & ade \ &= \ -1, \\
  c+e \ &= \ 1, & bd \ &= \ 1, & bcf \ &= \ -1, \\
  d+f \ &= \ 1, & ef \ &= \ 1, & 
 \end{align*}
 which can be illustrated as in \autoref{fig: hexagon}. 

\begin{figure}[tb]
 \centering\includegraphics{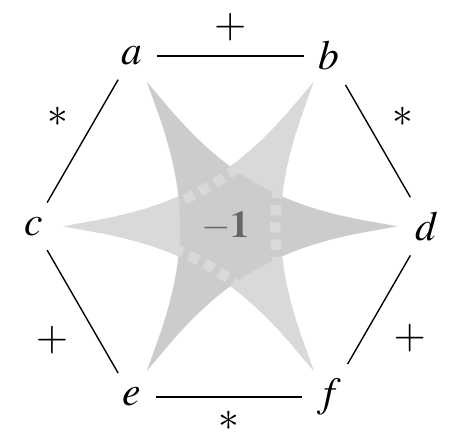}
\section{Applications}
\label{section: Applications}


\subsection{Applications of ternary and \texorpdfstring{$\WLUM$}{WLUM}-lifts}
\label{subsection: Applications of ternary and WLUM-lifts}

There are a number of interesting results which are immediate consequences of the lift theorem for ternary matroids (\autoref{thm: lift theorem for ternary matroids}). As a first example, we have the following short proof of a celebrated theorem of Tutte (\cite{Tutte58b}).

\begin{thm}\label{thm: a matroid is regular iff it is binary and ternary}
 A matroid is regular if and only if it is binary and ternary.
\end{thm}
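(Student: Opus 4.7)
The plan is to derive Tutte's theorem as a one-line consequence of the machinery already developed in \autoref{subsection: The lift theorem for binary matroids}. The two ingredients I need are: (a) the structural fact that the foundation of any binary matroid lies in $\Lifts_{\binary}$ (i.e., is isomorphic to $\Funpm$ or $\F_2$), which is exactly what was invoked via \cite[Thm.\ 7.32]{Baker-Lorscheid18} in the proof of \autoref{thm: lift theorem for binary matroids}; and (b) the universal property of the binary lift established in \autoref{prop: universal property of binary lifts}.

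First I would dispatch the easy direction. If $M$ is regular, there exists a morphism $F_M\to\Funpm$. Since $\Funpm$ is the initial object of $\Pastures$, composing with the unique morphisms $\Funpm\to\F_2$ and $\Funpm\to\F_3$ yields representations of $M$ over both $\F_2$ and $\F_3$, so $M$ is binary and ternary.

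For the converse, suppose $M$ is both binary and ternary. Binarity gives $F_M\in\Lifts_{\binary}$, and ternarity provides a morphism $\alpha:F_M\to\F_3$. Since $-1\neq 1$ in $\F_3$, we have $\cL_{\binary}(\F_3)=\Funpm$. By \autoref{prop: universal property of binary lifts} applied to $\alpha$, there is a unique $\hat\alpha:F_M\to\Funpm$ with $\alpha=\lambda_{\binary,\F_3}\circ\hat\alpha$. The existence of $\hat\alpha$ exhibits $M$ as representable over $\Funpm$, i.e., regular.

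There is essentially no obstacle here: all substantive work has already been done in building $\cL_{\binary}$ and its universal property, and in identifying binary foundations. A slightly more elementary variant of the converse would simply observe that no pasture morphism $\F_2\to\F_3$ can exist (it would have to send $1+1=0$ in $\F_2$ to $1+1\in\F_3$, which is nonzero, violating preservation of the nullset), so among the two possibilities $F_M\in\{\Funpm,\F_2\}$ only $F_M\simeq\Funpm$ is compatible with ternarity; but the categorical formulation via $\cL_{\binary}$ is cleaner and fits the idiom of the section.
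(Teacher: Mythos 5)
Your proof is correct, but it takes a genuinely different (and more elementary) route than the paper's. The paper proves the converse using the \emph{ternary} lift theorem (\autoref{thm: lift theorem for ternary matroids}) together with the computation $\Lift_{\ternary}\F_2\simeq\Funpm$: ternarity places $F_M$ in $\Lifts_\ternary$, so the morphism $F_M\to\F_2$ witnessing binarity lifts along $\lambda_{\F_2}$ to a morphism $F_M\to\Funpm$. You instead dualize the roles of $2$ and $3$: you use the \emph{binary} lift theorem, so binarity places $F_M$ in $\Lifts_\binary$, and the morphism $F_M\to\F_3$ witnessing ternarity lifts along $\lambda_{\binary,\F_3}:\cL_\binary(\F_3)=\Funpm\to\F_3$. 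Both deductions are valid one-liners once the respective lifts are in place, but they rest on rather different amounts of machinery. The paper's version requires the structure theorem for ternary foundations (\autoref{thm: structure theorem for foundations of ternary matroids}), which in turn depends on the full classification of $\WLUM$-foundations from \cite{Baker-Lorscheid20}. Your version only needs the classification of binary foundations (\cite[Thm.\ 7.32]{Baker-Lorscheid18}, that $F_M\in\{\Funpm,\F_2\}$ for $M$ binary) plus the essentially trivial coreflection $\cL_\binary$. In that sense yours is the more economical derivation, and your observation that one can bypass the coreflection language entirely (there is no pasture morphism $\F_2\to\F_3$, so $F_M\not\simeq\F_2$, hence $F_M\simeq\Funpm$) strips it down to the bare minimum. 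The trade-off is that the paper's proof is placed in \autoref{section: Applications} as an illustration of the ternary lift machinery, so the authors deliberately chose the heavier tool to showcase it; your argument would fit more naturally at the end of \autoref{subsection: The lift theorem for binary matroids}.
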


\begin{proof}
 Since there are morphisms $\Funpm\to\F_2$ and $\Funpm\to\F_3$, every regular matroid is binary and ternary. The converse follows from \autoref{thm: lift theorem for ternary matroids}, noting that $\Lift_{\ternary}\F_2=\Funpm$.
\end{proof}

Further consequences are the following. In each case (with exception of the unique lifting of orientations to $\D$-rescaling classes, which has been proven in \cite[Thm.\ 6.9]{Baker-Lorscheid20}), the uniqueness assertion is novel.

\begin{thm}\label{thm: unique lifts of representations of ternary matroids}
 Let $M$ be a ternary matroid. Then up to rescaling equivalence,
 \begin{enumerate}
  \item every $\F_4$-representation of $M$ lifts uniquely to $\H$;
  \item every $\F_5$-representation of $M$ lifts uniquely to $\D$;
  \item every $\F_7$-representation of $M$ lifts uniquely to $\D\otimes\H$;
  \item every $\F_8$-representation of $M$ lifts uniquely to $\U$;
  \item every $\F_9$-representation of $M$ lifts uniquely to $\F_3\otimes\U$;
  \item every $\F_{11}$-representation of $M$ lifts uniquely to $\D\otimes\U$;
  \item every $\F_{13}$-representation of $M$ lifts uniquely to $\D\otimes\H\otimes\U$;
  \item every $\G$-representation of $M$ lifts uniquely to $\U$;
  \item every $\S$-representation of $M$ lifts uniquely to $\D$;
  \item every $\W$-representation of $M$ lifts uniquely to $\F_3\otimes\D$.
 \end{enumerate}
\end{thm}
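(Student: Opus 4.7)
The plan is to reduce every one of the ten assertions to the lift theorem for ternary matroids (\autoref{thm: lift theorem for ternary matroids}), which asserts that for any pasture $P$ and ternary matroid $M$, every $P$-representation of $M$ lifts uniquely up to rescaling equivalence to $\Lift_{\ternary}P$. Once this is invoked, each statement becomes the purely pasture-theoretic computation
\[
 \Lift_{\ternary} P \ \simeq \ Q
\]
for the respective pair $(P,Q)$ appearing in (1)--(10). All ten such identifications are already recorded in \autoref{subsection: Examples of lifts}, so the proof reduces to justifying them.

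For the computations, I would use the same recipe in each case: first enumerate $\Hex(P)$; second, identify the type (near-regular, dyadic, hexagonal, or ternary) of each hexagon; third, apply \autoref{prop: isomorphism types of hexagon lifts} to read off $\Lift_\Xi\in\{\U,\D,\H,\F_3\}$; and fourth, assemble $\Lift_{\ternary} P=\bigotimes_{\Xi\in\Hex(P)}\Lift_\Xi$ from the definition. For the finite field cases $P=\F_q$, I would use \autoref{cor: numbers of hexagons in Fq by type} together with the explicit tables in \autoref{subsection: Examples of hexagons} to write down $\Hex(\F_q)$ for $q\in\{4,5,7,8,9,11,13\}$. For $P\in\{\G,\S,\W\}$, the explicit hexagon lists in \autoref{subsection: Examples of hexagons} can be used directly. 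For instance, $\F_9$ has exactly one hexagon of ternary type and one of near-regular type, yielding $\Lift_{\ternary}\F_9\simeq\F_3\otimes\U$; $\F_{13}$ has one hexagon each of dyadic, hexagonal, and near-regular type, giving $\Lift_{\ternary}\F_{13}\simeq\D\otimes\H\otimes\U$; $\S$ has a single dyadic hexagon, so $\Lift_{\ternary}\S\simeq\D$; and $\W$ has one dyadic and one ternary hexagon, so $\Lift_{\ternary}\W\simeq\F_3\otimes\D$.

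The remaining inputs are purely mechanical. The classification \autoref{prop: isomorphism types of hexagon lifts} handles the passage from hexagon type to $\{\U,\D,\H,\F_3\}$, while the fact that the foundations of ternary matroids live in $\Lifts_{\ternary}$ (\autoref{thm: structure theorem for foundations of ternary matroids}) is what lets \autoref{thm: lift theorem for ternary matroids} apply in the first place, through the universal property \autoref{prop: universal property of ternary lifts}.

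There is no serious obstacle; the only points requiring care are the bookkeeping in the tensor-product step (making sure that distinct hexagons contribute distinct tensor factors without spurious identifications, which is guaranteed by \autoref{lemma: fundamental pairs of a pasture equal those of the ternary lift}) and the correct typing of each hexagon in the finite field cases, where one must not confuse a hexagonal hexagon in $\F_7$ with a near-regular one, etc. Once the ten identifications $\Lift_{\ternary}P\simeq Q$ are in hand, the theorem follows by a single application of \autoref{thm: lift theorem for ternary matroids} in each case.
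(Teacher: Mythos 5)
Your proposal is correct and takes the same route as the paper: invoke \autoref{thm: lift theorem for ternary matroids} and then read off $\Lift_{\ternary} P\simeq Q$ from the ternary-lift computations in \autoref{subsection: Examples of lifts}, which in turn rest on the hexagon enumerations in \autoref{subsection: Examples of hexagons} and \autoref{prop: isomorphism types of hexagon lifts}. The paper's own proof is a one-liner citing exactly these two ingredients; your write-up just unfolds the same chain of references in more detail.
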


\begin{proof}
 This follows at once from \autoref{thm: lift theorem for ternary matroids} and the examples of ternary lifts provided in \autoref{subsection: Examples of lifts}.
\end{proof}

Following the same template, the lift theorem for $\WLUM$-matroids (\autoref{thm: lift theorem for matroids wlum}) yields the following result.

\begin{thm}\label{thm: unique lifts of representations of matroids wlum}
 Let $M$ be a matroid without large uniform minors. Then up to rescaling equivalence,
 \begin{enumerate}
  \item every $\F_4$-representation of $M$ lifts uniquely to $\F_2\otimes\H$;
  \item every $\F_8$-representation of $M$ lifts uniquely to $\F_2\otimes\U$.
 \end{enumerate}
 Moreover, the conclusions of all parts of \autoref{thm: unique lifts of representations of ternary matroids} except for (1) and (4) still hold if we replace the assumption that $M$ is a ternary matroid by the assumption that $M$ is a matroid without large uniform minors.
\end{thm}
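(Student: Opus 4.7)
The plan is to derive everything from the Lift Theorem for $\WLUM$-matroids (\autoref{thm: lift theorem for matroids wlum}), which states that for any pasture $P$ and any matroid $M$ without large uniform minors, every $P$-representation of $M$ lifts uniquely to $\Lift_\wlum P$ up to rescaling equivalence. Thus the entire proof reduces to identifying $\Lift_\wlum P$ for each pasture $P$ appearing in the statement, and then observing that the claimed target pasture (e.g.\ $\F_2\otimes\H$ for $P=\F_4$) is precisely $\Lift_\wlum P$.

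Recall the dichotomy from the definition of $\Lift_\wlum$: if $-1\neq 1$ in $P$, then $\Lift_\wlum P=\Lift_\ternary P$, while if $-1=1$ in $P$, then $\Lift_\wlum P=\Lift_\ternary P\otimes\F_2$. The relevant ternary lifts have already been computed in \autoref{subsection: Examples of lifts}. For parts (1) and (2) of the current theorem, I would note that $\F_4$ and $\F_8$ have characteristic $2$, so that $-1=1$, and combine $\Lift_\ternary\F_4\simeq\H$ and $\Lift_\ternary\F_8\simeq\U$ with the extra tensor factor $\F_2$ to get $\Lift_\wlum\F_4\simeq\F_2\otimes\H$ and $\Lift_\wlum\F_8\simeq\F_2\otimes\U$, respectively. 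The claims then follow from \autoref{thm: lift theorem for matroids wlum}.

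For the final sentence, I would verify that each of the pastures $\F_5,\F_7,\F_9,\F_{11},\F_{13},\G,\S,\W$ satisfies $-1\neq 1$, so that $\Lift_\wlum$ coincides with $\Lift_\ternary$ on these pastures. (Indeed, $\F_q$ for odd $q$ and the partial field $\G$ clearly have $-1\neq 1$; for $\S$ and $\W$ the nullset does not contain $1+1$, so again $-1\neq 1$.) Consequently, the computations of ternary lifts recorded in \autoref{subsection: Examples of lifts} give exactly the same target pastures as in parts (2), (3), (5)--(10) of \autoref{thm: unique lifts of representations of ternary matroids}, and a direct application of \autoref{thm: lift theorem for matroids wlum} yields the desired unique lifting statements for $\WLUM$-matroids.

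There is no real obstacle here: the substance of the argument has already been absorbed into the Lift Theorem for $\WLUM$-matroids and the tabulation of ternary lifts. The only point that requires momentary care is the case-split based on whether $-1=1$ in the source pasture, which explains precisely why parts (1) and (4) of \autoref{thm: unique lifts of representations of ternary matroids} must be restated (the $\F_2$-factor appears) while the remaining parts transfer verbatim.
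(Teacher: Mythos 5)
Your proposal is correct and follows essentially the same route as the paper: apply the Lift Theorem for $\WLUM$-matroids, compute $\Lift_\wlum P$ via the dichotomy on whether $-1=1$ in $P$, and invoke the tabulated ternary lifts from \autoref{subsection: Examples of lifts}. The paper's own proof is a terser version of exactly this argument.
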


\begin{proof}
 The numbered items (1) and (2) follow from \autoref{thm: lift theorem for matroids wlum} and the characterizations $\Lift_{\ternary} \F_4\simeq\H$ and $\Lift_{\ternary}\F_8\simeq\U$ from \autoref{subsection: Examples of lifts}.
 The remaining assertions about \autoref{thm: unique lifts of representations of ternary matroids} follow directly from the observation that $\Lift_{\ternary}P = \Lift_{\wlum} P$ if $-1\neq 1$ in $P$.
\end{proof}


\subsection{Incidences for rescaling classes}
\label{subsection: Incidences for rescaling classes}

Let $M$ be a ternary matroid with foundation $F_M$, let $P$ be a pasture, and let 
$\cX_M(P)=\Hom(F_M,P)$ be the corresponding rescaling class space. By \autoref{thm: lift theorem for ternary matroids}, the ternary lift $\lambda_P:\Lift_{\ternary} P\to P$ induces a bijection
\[
 \cX_M(\Lift_{\ternary} P) \ = \ \Hom(F_M,\Lift_{\ternary} P) \ \stackrel{\lambda_{P,\ast}}\longrightarrow \ \Hom(F_M,P) \ = \ \cX_M(P)
\]
between the rescaling classes of $M$ over $\Lift_{\ternary} P$ and $P$. Therefore an isomorphism $\varphi:\Lift_{\ternary} P\to\Lift_{\ternary} Q$ of ternary lifts induces a bijection
\[
 \cX_M(P) \ \stackrel{\lambda_{P,\ast}^{-1}}\longrightarrow \ \Hom(F_M,\Lift_{\ternary} P) \ \stackrel{\varphi_\ast}\longrightarrow \ \Hom(F_M,\Lift_{\ternary} Q) \ \stackrel{\lambda_{Q,\ast}}\longrightarrow \ \cX_M(Q)
\]
of rescaling class spaces for every ternary matroid $M$. We will exploit this observation throughout this section.

A first application is the following.

\begin{thm}\label{thm: rescaling class spaces of G and S}
 Let $M$ be a ternary matroid. Then there are natural bijections
 \[
  \cX_M(\G) \ \stackrel\sim\longrightarrow \ \cX_M(\F_8) \qquad \text{and} \qquad \cX_M(\S) \ \stackrel\sim\longrightarrow \ \cX_M(\F_5).
 \]
\end{thm}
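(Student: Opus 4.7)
The plan is to read this off directly from the general principle spelled out in the paragraph preceding the theorem, combined with the catalog of ternary lifts computed in \autoref{subsection: Examples of lifts}. The key fact is that if $M$ is ternary with foundation $F_M$, then the ternary lift $\lambda_P : \Lift_{\ternary} P \to P$ induces, via \autoref{thm: lift theorem for ternary matroids}, a natural bijection $\cX_M(\Lift_{\ternary} P) = \Hom(F_M, \Lift_{\ternary} P) \xrightarrow{\lambda_{P,\ast}} \Hom(F_M, P) = \cX_M(P)$, and similarly for $Q$. Hence any isomorphism $\varphi : \Lift_{\ternary} P \xrightarrow{\sim} \Lift_{\ternary} Q$ yields a natural bijection $\cX_M(P) \xrightarrow{\sim} \cX_M(Q)$ via $\lambda_{Q,\ast} \circ \varphi_\ast \circ \lambda_{P,\ast}^{-1}$.

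So the proof reduces to exhibiting isomorphisms $\Lift_{\ternary} \G \simeq \Lift_{\ternary} \F_8$ and $\Lift_{\ternary} \S \simeq \Lift_{\ternary} \F_5$. But this is exactly what is listed in \autoref{subsection: Examples of lifts}: one has $\Lift_{\ternary} \G \simeq \U \simeq \Lift_{\ternary} \F_8$ (since both $\G$ and $\F_8$ have a unique hexagon, which is of near-regular type, so \autoref{prop: isomorphism types of hexagon lifts} gives $\U$ in each case), and $\Lift_{\ternary} \S \simeq \D \simeq \Lift_{\ternary} \F_5$ (since both $\S$ and $\F_5$ have a unique hexagon, which is of dyadic type, so \autoref{prop: isomorphism types of hexagon lifts} gives $\D$ in each case).

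Applying the general principle with these isomorphisms yields the desired natural bijections $\cX_M(\G) \xrightarrow{\sim} \cX_M(\F_8)$ and $\cX_M(\S) \xrightarrow{\sim} \cX_M(\F_5)$. There is essentially no obstacle here: the whole machinery of ternary lifts has been set up precisely so that statements of this form become immediate corollaries, and the computations of $\Lift_{\ternary} \G$, $\Lift_{\ternary} \F_8$, $\Lift_{\ternary} \S$, and $\Lift_{\ternary} \F_5$ are already recorded in the table of examples. The only thing worth emphasizing in the write-up is the naturality, which follows from the functoriality of $\lambda_P$ and $\lambda_Q$ together with the functorial identification $\cX_M(-) = \Hom(F_M, -)$ given by the foundation.
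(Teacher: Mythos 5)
Your proposal is correct and matches the paper's own proof: the paper likewise invokes the general lifting principle from the preamble of the subsection together with the computed identifications $\Lift_{\ternary}\G\simeq\U\simeq\Lift_{\ternary}\F_8$ and $\Lift_{\ternary}\S\simeq\D\simeq\Lift_{\ternary}\F_5$.
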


\begin{proof}
 This follows at once from the previous considerations due to the identifications $\Lift_{\ternary}\G\simeq\U\simeq\Lift_{\ternary}\F_8$ and $\Lift_{\ternary}\S\simeq\D\simeq\Lift_{\ternary}\F_5$.
\end{proof}


\subsection{Hexagons in products}
\label{subsection: Hexagons in products}

We will find even more interesting incidences between products of rescaling class spaces in the next section. First, we need to understand how the hexagons in a product of pastures relate to the hexagons of the factors.

Let $P_1$ and $P_2$ be pastures and let $P=P_1\times P_2$ be their product. Since for $a=(a_1,a_2)$ and $b=(b_1,b_2)$ in $P$, $a+b=1$ if and only if $a_i+b_i=1$ for $i=1,2$, we obtain a bijection
\[
 \begin{array}{cccc}
  \psi: & P^\pair & \longrightarrow & P_1^\pair\times P_2^\pair, \\
        & (a,b)   & \longmapsto     & \Big((a_1,b_1),(a_2,b_2)\Big)
 \end{array}
\]
which is readily verified to be $D_3$-invariant and therefore induces a map
\[
 \begin{array}{cccc}
  \Psi: & \Hex(P)  & \longrightarrow & \Hex(P_1)\times \Hex(P_2) \\
        & \Xi(a,b) & \longmapsto     & \Big(\Xi(a_1,b_1),\Xi(a_2,b_2)\Big).
 \end{array}
\]
Recall from \autoref{def: orbit length} that the orbit length of a hexagon $\Xi$ is $\mu_\Xi=\#\Xi^\pair$.

\begin{prop}\label{prop: hexagons in products}
 Let $P=P_1\times P_2$ be the product of two pastures with associated maps $\psi:P^\pair\to P_1^\pair\times P_2^\pair$ and $\Psi:\Hex(P)\to\Hex(P_1)\times \Hex(P_2)$. Then
 \begin{enumerate}
  \item\label{prodhex1} $\mu_{\Xi_i}$ divides $\mu_\Xi$ for all $\Xi\in\Hex(P)$ and $i=1,2$, where $(\Xi_1,\Xi_2)=\Psi(\Xi)$;
  \item\label{prodhex2} for all $\Xi_1\in\Hex(P_1)$ and $\Xi_2\in\Hex(P_2)$, we have
        \[
         \mu_{\Xi_1}\cdot \mu_{\Xi_2} \ = \ \sum_{\Xi\in\Psi^{-1}(\Xi_1,\Xi_2)} \mu_\Xi.
        \]
 \end{enumerate}
 Let $\Xi_1\in\Hex(P_1)$ and $\Xi_2\in\Hex(P_2)$. Then the cardinality $r=\#\Psi^{-1}(\Xi_1,\Xi_2)$ and the orbit lengths $(\mu_{\tilde\Xi_1},\dotsc,\mu_{\tilde\Xi_r})$ of the hexagons $\tilde\Xi_1,\dotsc,\tilde\Xi_r$ in $\Psi^{-1}(\Xi_1,\Xi_2)$ only depend on $(\mu_{\Xi_1},\mu_{\Xi_2})$, up to a permutation of $\tilde\Xi_1,\dotsc,\tilde\Xi_r$, and are as in \autoref{table: the orbit lengths of hexagons in a product of hexagons}. 
\end{prop}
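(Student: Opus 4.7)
The main strategy is to recognize that the bijection $\psi:P^\pair\to P_1^\pair\times P_2^\pair$ intertwines the $D_3$-action on $P^\pair$ with the \emph{diagonal} $D_3$-action on the target; both parts then reduce to elementary group theory on $D_3$. First I would verify this equivariance by direct computation: writing $a=(a_1,a_2)$ and $b=(b_1,b_2)$ for elements of $P=P_1\times P_2$, the actions $\rho.(a,b)=(1/b,-a/b)$ and $\sigma.(a,b)=(b,a)$ are defined by monoid operations that are coordinatewise in the product, so $\psi(\rho.(a,b))=(\rho.(a_1,b_1),\rho.(a_2,b_2))$, and analogously for $\sigma$. Consequently $\Psi$ identifies the set of hexagons of $P$ lying over $(\Xi_1,\Xi_2)$ with the set of $D_3$-orbits of the diagonal action on $\Xi_1^\pair\times\Xi_2^\pair$.

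For (1), pick any $\Xi\in\Psi^{-1}(\Xi_1,\Xi_2)$ and a representative $(a,b)\in\Xi^\pair$ with coordinates $(a_i,b_i)\in\Xi_i^\pair$. Since fixing a pair in the product amounts to fixing each coordinate, $\Stab_{D_3}(a,b)=\Stab_{D_3}(a_1,b_1)\cap \Stab_{D_3}(a_2,b_2)$, which is contained in each $\Stab_{D_3}(a_i,b_i)$. The orbit-stabilizer theorem then yields $\mu_{\Xi_i}=[D_3:\Stab_{D_3}(a_i,b_i)]$ dividing $[D_3:\Stab_{D_3}(a,b)]=\mu_\Xi$. For (2), partitioning $\Xi_1^\pair\times\Xi_2^\pair$ into $D_3$-orbits yields
\[
\mu_{\Xi_1}\cdot\mu_{\Xi_2} \ = \ \#(\Xi_1^\pair\times\Xi_2^\pair) \ = \ \sum_{\Xi\in\Psi^{-1}(\Xi_1,\Xi_2)} \#\Xi^\pair \ = \ \sum_{\Xi\in\Psi^{-1}(\Xi_1,\Xi_2)} \mu_\Xi,
\]
as claimed.

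For the final assertion, each $\Xi_i^\pair$ is a transitive $D_3$-set of cardinality $\mu_{\Xi_i}$. Since $D_3=S_3$ has exactly one conjugacy class of subgroups of each order in $\{1,2,3,6\}$, the isomorphism class of a transitive $D_3$-set is determined by its cardinality. Hence both the $D_3$-set $\Xi_1^\pair\times\Xi_2^\pair$ with diagonal action and its orbit decomposition depend only on the pair $(\mu_{\Xi_1},\mu_{\Xi_2})$, giving the required invariance. The specific entries of the table are then obtained by traversing the $4\times 4$ grid of pairs in $\{1,2,3,6\}^2$ and computing the orbits of the diagonal $D_3$-action on $(D_3/H_1)\times(D_3/H_2)$, where $H_i$ is a subgroup of the prescribed index. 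I expect this finite verification to be the main bookkeeping obstacle, though it is conceptually routine; one can simplify it by observing that in each fibre the orbit lengths must be common multiples of $\operatorname{lcm}(\mu_{\Xi_1},\mu_{\Xi_2})$ dividing $6$, and must sum to $\mu_{\Xi_1}\mu_{\Xi_2}$ by (2), which already pins down most entries.
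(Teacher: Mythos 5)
Your arguments for (1) and (2) are correct, and part (1) is essentially the paper's proof rephrased: the paper notes that a $D_3$-equivariant map between two transitive $D_3$-sets is surjective and deduces the divisibility, while you equivalently observe $\Stab_{D_3}(a,b)=\Stab_{D_3}(a_1,b_1)\cap\Stab_{D_3}(a_2,b_2)$ and apply orbit--stabilizer. Your proof of (2) matches the paper's, and you also usefully verify the $D_3$-equivariance of $\psi$, which the paper states as ``readily verified.'' Your framing of the invariance assertion in (3) is actually a little cleaner than the paper's: since $D_3=S_3$ has exactly one conjugacy class of subgroups of each order in $\{1,2,3,6\}$, the transitive $D_3$-set $\Xi_i^\pair$ is determined up to isomorphism by its cardinality $\mu_{\Xi_i}$, and hence so is the diagonal product and its orbit decomposition. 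The paper instead obtains this invariance as a byproduct of showing that constraints (1)--(2) together with one ad hoc stabilizer computation pin the table down.

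The gap is in the table itself. Your shortcut — orbit lengths are common multiples of $\operatorname{lcm}(\mu_{\Xi_1},\mu_{\Xi_2})$ dividing $6$ and summing to $\mu_{\Xi_1}\mu_{\Xi_2}$ — determines every entry except $(\mu_{\Xi_1},\mu_{\Xi_2})=(3,3)$, where both $(3,3,3)$ and $(3,6)$ satisfy the numerical constraints, and you leave this case as ``bookkeeping.'' This is precisely the case the paper singles out: it writes the stabilizers of a fundamental pair in each $\Xi_i$ as conjugates $\tau_i\gen\sigma\tau_i^{-1}$ of the order-$2$ subgroup $\gen\sigma$, observes that the intersection $\tau_1\gen\sigma\tau_1^{-1}\cap\tau_2\gen\sigma\tau_2^{-1}$ is either one of these order-$2$ subgroups (when $\tau_1\tau_2^{-1}\in\gen\sigma$) or trivial, notes that both possibilities occur among the six choices of $(\tau_1,\tau_2)$, and concludes the orbit decomposition is $(3,6)$. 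Equivalently, in your language one must actually count orbits of the diagonal action on $(D_3/\gen\sigma)\times(D_3/\gen\sigma)$; the proof is not complete without this step.
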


\begin{table}[ht]
 \caption{The orbit lengths of hexagons in $\Psi^{-1}(\Xi_1,\Xi_2)$}
 \label{table: the orbit lengths of hexagons in a product of hexagons}
  \begin{tabular}{|c||c|c|c|c|}
   \hline
   $\mu_{\Xi_1}\ \backslash\ \mu_{\Xi_2}$  & $1$   & $2$     & $3$       & $6$ \\
   \hline\hline
   $1$                                     & $(1)$ & $(2)$   & $(3)$     & $(6)$ \\ 
   \hline
   $2$                                     & $(2)$ & $(2,2)$ & $(6)$     & $(6,6)$ \\ 
   \hline
   $3$                                     & $(3)$ & $(6)$   & $(3,6)$   & $(6,6,6)$ \\ 
   \hline
   $6$                                     & $(6)$ & $(6,6)$ & $(6,6,6)$ & $(6,6,6,6,6,6)$ \\
   \hline
  \end{tabular}
\end{table}

\begin{proof}
 We begin with \eqref{prodhex1}. Consider $\Xi\in\Hex(P)$ and $(\Xi_1,\Xi_2)=\Psi(\Xi)$. Then $\psi$ restricts to a $D_3$-equivariant map $\psi\vert_{\Xi}:\Xi^\pair\to\Xi_1^\pair\times\Xi_2^\pair$. Composing this map with the $i$-th projection yields a $D_3$-equivariant map $\Xi^\pair\to\Xi_i^\pair$ for $i=1,2$. Since both the domain and codomain consist of a single $D_3$-orbit, we conclude that $\mu_{\Xi_i}=\#\Xi_i^\pair$ divides $\mu_\Xi=\#\Xi^\pair$, which proves \eqref{prodhex1}.
 
 We continue with \eqref{prodhex2}. The sets $\Xi_1^\pair$ and $\Xi_2^\pair$ are orbits of the $D_3$-action on $P_1^\pair$ and $P_2^\pair$, respectively. Thus the action of $D_3$ on $P^\pair$ restricts to $(\Xi_1\times\Xi_2)^\pair$ and this latter set decomposes into a disjoint union of $D_3$-orbits, which are precisely the sets of fundamental pairs of the hexagons in the fibre $\Psi^{-1}(\Xi_1,\Xi_2)$. Thus we obtain
 \[
  \mu_{\Xi_1}\cdot \mu_{\Xi_2} \ = \ \Big(\#\Xi_1^\pair\Big) \cdot \Big(\#\Xi_2^\pair\Big) \ = \ \#\Big(\Xi_1^\pair\times\Xi_2^\pair\Big) \ = \ \#\hspace{-0.4cm}\coprod_{\Xi\in\Psi^{-1}(\Xi_1,\Xi_2)}\hspace{-0.4cm}\Xi^\pair \ = \ \sum_{\Xi\in\Psi^{-1}(\Xi_1,\Xi_2)} \mu_\Xi,
 \]
 which establishes \eqref{prodhex2}.
 
 Given hexagons $\Xi_1\in\Hex(P_1)$ and $\Xi_2\in\Hex(P_2)$ with $\Psi^{-1}(\Xi_1,\Xi_2)=\{\tilde\Xi_1,\dotsc,\tilde\Xi_r\}$, we know by properties \eqref{prodhex1} and \eqref{prodhex2} that $\mu_{\Xi_i}$ divides $\mu_{\tilde\Xi_j}$ for all $i=1,2$ and $j=1,\dotsc,r$ and that $\mu_{\tilde\Xi_1}+\dotsb+\mu_{\tilde\Xi_r}=\mu_{\Xi_1}\cdot\mu_{\Xi_2}$. These properties determine $r$ and $(\mu_{\tilde\Xi_1},\dotsc,\mu_{\tilde\Xi_r})$ uniquely for all $(\mu_{\Xi_1},\mu_{\Xi_2})$, as presented in \autoref{table: the orbit lengths of hexagons in a product of hexagons}, with the single exception of the case $(\mu_{\Xi_1},\mu_{\Xi_2})=(3,3)$, for which the outcome could be either $(3,6)$ or $(3,3,3)$. We settle this case by analyzing the stabilizers of the relevant fundamental elements.

 To explain, a hexagon $\Xi$ is of near-regular type if and only if for any of its fundamental elements $(a,b)\in\Xi^\pair$, the stabilizer $\Stab_{D_3}(a,b)$ is trivial, and it is of dyadic type if and only if $\Stab_{D_3}(a,b)$ is cyclic of order $2$, i.e.\ if $\Stab_{D_3}(a,b)$ is generated by a reflection $\tau\sigma\tau^{-1}$ for some $\tau\in D_3$. If both $\Xi_1$ and $\Xi_2$ are of dyadic type and $(a_i,b_i)\in\Xi_i^\pair$ for $i=1,2$, then 
 \[
  \Stab_{D_3}\Big((a_1,b_1),(a_2,b_2)\Big) \ = \ \tau_1\gen\sigma\tau_1^{-1} \cap \tau_2\gen\sigma\tau_2^{-1} \ = \ \begin{cases}
                                                                                                                     \tau_1\gen\sigma\tau_1^{-1} & \text{if }\tau_1\tau_2^{-1}\in\gen\sigma, \\
                                                                                                                     \{e\}                       & \text{if }\tau_1\tau_2^{-1}\notin\gen\sigma, \\
                                                                                                                    \end{cases}
 \]
 where $\tau_1,\tau_2\in D_3$ depend on $\Big((a_1,b_1),(a_2,b_2)\Big)\in(\Xi_1\times\Xi_2)^\pair$. Since both cases $\tau_1\tau_2^{-1}\in\gen\sigma$ and $\tau_1\tau_2^{-1}\notin\gen\sigma$ occur, there is at least one hexagon $\tilde\Xi_1$ of dyadic type and at least one hexagon $\tilde\Xi_2$ of near-regular type among the hexagons $\tilde\Xi_1,\dotsc,\tilde\Xi_r$. Since $\#(\Xi_1\times\Xi_2)=3\cdot 3=3+6=\#\tilde\Xi_1+\#\tilde\Xi_2$, we conclude that there are no hexagons other than these two, which completes the proof.
\end{proof}

We equip ourselves with an additional fact about fundamental elements in product pastures.

\begin{lemma}\label{lemma: fundamental elements in product pastures}
 Let $P=P_1\times P_2$ be the product of two pastures $P_1$ and $P_2$. Then we have an equality
 \[
  \bigcup_{\substack{\Xi_1\in\Hex(P_1),\\ \Xi_2\in\Hex(P_2)\hfill}} \norm{\Xi_1}\times\norm{\Xi_2} \ = \ \bigcup_{\Xi\in\Hex(P)} \norm{\Xi}
 \]
 of subsets of $P$. If $P_1$ and $P_2$ are partial fields, then both unions are disjoint. 
\end{lemma}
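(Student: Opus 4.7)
The plan is to prove both the equality of sets and the disjointness assertion by directly translating between fundamental pairs in $P$ and pairs of fundamental pairs in $P_1$ and $P_2$, using the bijection $\psi : P^\pair \to P_1^\pair \times P_2^\pair$ introduced just before \autoref{prop: hexagons in products}. Throughout, I use the fact (immediate from the definitions) that $\Xi^\fund$ is precisely the set of first coordinates of pairs in $\Xi^\pair$.

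For the inclusion $\supseteq$, I would start with an element $(a_1,a_2)\in\Xi^\fund$ for some hexagon $\Xi\in\Hex(P)$. Then $(a_1,a_2)$ is the first coordinate of some fundamental pair $\bigl((a_1,a_2),(b_1,b_2)\bigr)\in\Xi^\pair\subseteq P^\pair$. Applying $\psi$, the pairs $(a_1,b_1)\in P_1^\pair$ and $(a_2,b_2)\in P_2^\pair$ are fundamental pairs in the respective factors, hence by \autoref{prop: nullset and hexagons} they determine hexagons $\Xi_1\in\Hex(P_1)$ and $\Xi_2\in\Hex(P_2)$ with $a_1\in\Xi_1^\fund$ and $a_2\in\Xi_2^\fund$, giving $(a_1,a_2)\in\Xi_1^\fund\times\Xi_2^\fund$. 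For the reverse inclusion $\subseteq$, given $(a_1,a_2)\in\Xi_1^\fund\times\Xi_2^\fund$, I would choose fundamental pairs $(a_1,b_1)\in\Xi_1^\pair$ and $(a_2,b_2)\in\Xi_2^\pair$. By the bijection $\psi^{-1}$ we obtain a fundamental pair $\bigl((a_1,a_2),(b_1,b_2)\bigr)\in P^\pair$, so $(a_1,a_2)$ is a fundamental element of the associated hexagon in $P$. This settles the equality.

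For the disjointness claim when $P_1$ and $P_2$ are partial fields, I would invoke \autoref{lemma: product of partial fields is a partial field} to conclude that $P=P_1\times P_2$ is again a partial field. By \autoref{prop: hexagons in partial fields}, distinct hexagons in any partial field have disjoint supports, so the right-hand union is disjoint. Applying the same proposition separately to $P_1$ and $P_2$ shows that the $\Xi_i^\fund$ are pairwise disjoint as $\Xi_i$ ranges over $\Hex(P_i)$ for each $i=1,2$; consequently the Cartesian products $\Xi_1^\fund\times\Xi_2^\fund$ are pairwise disjoint as $(\Xi_1,\Xi_2)$ varies, yielding disjointness of the left-hand union.

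No step presents a genuine obstacle; the only minor subtlety to be careful about is the $D_3$-equivariance of $\psi$, which ensures that $\psi$ identifies $\Xi^\pair$ with a single $D_3$-orbit inside $\Xi_1^\pair\times\Xi_2^\pair$, so that the translation between fundamental elements really goes through. Once this is observed, the argument reduces to a routine unpacking of definitions combined with the two cited facts about partial fields.
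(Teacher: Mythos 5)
Your proof is correct and follows essentially the same route as the paper: the set-theoretic equality is an unwinding of the $D_3$-equivariant bijection $\psi : P^\pair \to P_1^\pair \times P_2^\pair$ (equivalently, of the decomposition $\Xi_1^\pair\times\Xi_2^\pair = \coprod_{\Xi\in\Psi^{-1}(\Xi_1,\Xi_2)}\Xi^\pair$ established in the proof of \autoref{prop: hexagons in products}), and disjointness comes from \autoref{lemma: product of partial fields is a partial field} together with \autoref{prop: hexagons in partial fields}. The one small difference is in the disjointness of the left-hand union: the paper deduces it from disjointness of the right-hand side via the fibre partition, whereas you argue directly that since the $\Xi_1^\fund$ are pairwise disjoint in $P_1$ and the $\Xi_2^\fund$ are pairwise disjoint in $P_2$, the Cartesian products $\Xi_1^\fund\times\Xi_2^\fund$ are pairwise disjoint. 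Both are valid and equally short; yours has the minor virtue of not passing through the right-hand side at all.
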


\begin{proof}
 Since $\norm{\Xi_i}=\{a\in P\mid (a,b)\in\Xi_i^\pair\}$, the identity $\Xi_1^\pair\times\Xi_2^\pair=\coprod_{\Xi\in\Psi^{-1}(\Xi_1,\Xi_2)}\Xi^\pair$ implies that $\norm{\Xi_1}\times\norm{\Xi_2}=\bigcup_{\Xi\in\Psi^{-1}(\Xi_1,\Xi_2)}\norm{\Xi_i}$. Taking the union over all hexagons in $P_1$, $P_2$ and $P$ yields the first claim.

If $P_1$ and $P_2$ are partial fields, then by \autoref{lemma: product of partial fields is a partial field}, $P_1\times P_2$ is a partial field, and \autoref{prop: hexagons in partial fields} implies that $\bigcup\norm{\tilde\Xi}$ is disjoint union. Thus $\bigcup\norm{\Xi_1}\times\norm{\Xi_2}$ is also a disjoint union.
\end{proof}

\begin{ex*}\label{ex: fundamental elements in SxS}
 Note that in general, the union $\norm{\Xi_1}\times\norm{\Xi_2} =\bigcup_{i=1}^r \norm{\tilde\Xi_i}$ is not disjoint. For example, let $\Xi_1=\Xi_2=\hex1111{-1}{-1}$ be the unique hexagon of $\S=\past\Funpm{\{1+1-1\}}$, which is of dyadic type. The product
 \[
  \S\times\S \ = \ \past{\{0,(\pm1,\pm1)\}}{\{(1,1)+(1,1)+(-1,-1),\ (1,-1)+(-1,1)+(-1,-1)\}}
 \]
 has two hexagons
 \[\textstyle
  \tilde\Xi_1 \ = \ \underset{\text{(dyadic type)}}{\hex{(1,1)}{(1,1)}{(1,1)}{(1,1)}{(-1,-1)}{(-1,-1)}} \quad \text{and} \quad \tilde\Xi_2 \ = \ \underset{\text{(near-regular type)}}{\hex{(1,-1)}{(-1,1)}{(1,-1)}{(-1,1)}{(1,1)}{(1,1)}}, 
 \]
 and $\norm{\Xi_1}\times\norm{\Xi_2}=\norm{\tilde\Xi_1}\cup\norm{\tilde\Xi_2}$ is not a disjoint union.
\end{ex*}

As a sample consequence of \autoref{prop: hexagons in products}, we find:

\begin{cor}\label{cor: rescaling class spaces of U, H, and S}
Let $M$ be a matroid. Then there is a natural bijection
 \[
  \cX_M(\U) \ \stackrel\sim\longrightarrow \ \cX_M(\H) \times \cX_M(\S).
 \]
\end{cor}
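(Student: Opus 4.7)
The plan is to compute $\cL_\wlum(\H\times\S)\simeq\U$ via the hexagon analysis of \autoref{section: Hexagons} and then invoke the lift theorem for $\WLUM$-matroids, treating matroids with large uniform minors separately by an emptiness argument.

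First I would identify the hexagons of $\H\times\S$. By \autoref{subsection: Four types of hexagons}, $\H$ has a unique hexagon $\Xi_\H$ of hexagonal type ($\mu_{\Xi_\H}=2$) and $\S$ has a unique hexagon $\Xi_\S$ of dyadic type ($\mu_{\Xi_\S}=3$). Applying \autoref{prop: hexagons in products} together with the entry $(2,3)$ in \autoref{table: the orbit lengths of hexagons in a product of hexagons} shows that the fiber $\Psi^{-1}(\Xi_\H,\Xi_\S)$ consists of a single hexagon $\tilde\Xi$ of near-regular type. By \autoref{prop: isomorphism types of hexagon lifts} we have $\Lift_{\tilde\Xi}\simeq\U$, and since $-1\neq 1$ in $\H\times\S$, the definition of the $\WLUM$-lift gives
\[
 \cL_\wlum(\H\times\S) \;=\; \bigotimes_{\Xi\in\Hex(\H\times\S)}\Lift_\Xi \;\simeq\; \U.
\]

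For a matroid $M$ without large uniform minors, the structure theorem \autoref{thm: structure theorem for foundations of matroids wlum} places $F_M$ in $\Lifts_\wlum$, so the universal property of the $\WLUM$-coreflection (\autoref{prop: universal property of WLUM-lifts}) yields a natural bijection $\Hom(F_M,\H\times\S)\simeq\Hom(F_M,\cL_\wlum(\H\times\S))\simeq\Hom(F_M,\U)$ obtained by uniquely lifting each morphism along the counit. Combining this with the universal property of products (\autoref{lemma: universal property of products}), which identifies $\Hom(F_M,\H\times\S)\simeq\Hom(F_M,\H)\times\Hom(F_M,\S)$, and the identification $\cX_M(P)=\Hom(F_M,P)$ from \autoref{thm: characterizing property of the foundation} produces the claimed bijection.

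It remains to handle matroids with a $U_{2,5}$ or $U_{3,5}$ minor, where I expect both sides to be empty. The assignment $z\mapsto -1$ defines a morphism $\H\to\F_3$ (since $-1+(-1)=1$ and $(-1)^3=-1$ in $\F_3$), so any $\H$-representation of $M$ would push forward to an $\F_3$-representation, making $M$ ternary; but ternary matroids avoid $U_{2,5}$ and $U_{3,5}$, so $\cX_M(\H)=\emptyset$ and hence $\cX_M(\H)\times\cX_M(\S)=\emptyset$. Further composition with the morphism $\U\to\H$ given by $x\mapsto z$, $y\mapsto z^{-1}$ (well-defined since $(z,z^{-1})\in\H^\pair$) forces $\cX_M(\U)=\emptyset$ as well, so the bijection holds trivially. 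The only substantial step is the hexagon identification in the first paragraph; everything else is a formal consequence of the coreflection and product universal properties.
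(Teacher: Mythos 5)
Your proof is correct and follows essentially the same strategy as the paper's: compute via \autoref{prop: hexagons in products} that $\H\times\S$ has a single near-regular hexagon so that the lift is $\U$, invoke the coreflection's universal property for the ``good'' matroids, and observe that both sides of the bijection are empty otherwise (using morphisms into $\F_3$). The paper simply splits on ternary vs.\ non-ternary matroids and uses $\Lift_\ternary$, while you split on $\WLUM$ vs.\ non-$\WLUM$ and use $\Lift_\wlum$; since $-1\neq1$ in $\H\times\S$ the two lifts agree here, so the difference is immaterial.
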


\begin{proof}
 By \autoref{prop: hexagons in products}, the pasture $\H \times \S$ has a single hexagon of near-regular type. The result now follows from the fact that $ \cX_M(\U)$ and $\cX_M(\H)$ are both empty if $M$ is not ternary, together with the identification $\Lift_{\ternary}\U\simeq \U \simeq\Lift_{\ternary}(\H \times \S)$. 
\end{proof}


\subsection{Incidences for products of rescaling class spaces}
\label{subsection: Incidences for products of rescaling class spaces}

\begin{thm}\label{thm: identifications of products of rescaling class spaces}
 Let $p_1$ and $p_2$ be two prime powers such that $q=(p_1-2)(p_2-2)+2$ is a prime power that is not divisible by $3$. Then there is an identification
 \[
  \cX_M(\F_q) \ = \ \cX_M(\F_{p_1}) \times \cX_M(\F_{p_2}) 
 \]
 for every ternary matroid $M$. 
\end{thm}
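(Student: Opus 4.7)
The plan is to reduce the desired identification to an isomorphism of ternary lifts. Since $M$ is ternary, its foundation $F_M$ lies in $\Lifts_\ternary$ by \autoref{thm: structure theorem for foundations of ternary matroids}, and the ternary lift theorem (\autoref{thm: lift theorem for ternary matroids}) supplies canonical bijections $\cX_M(\F_q) \cong \Hom(F_M, \Lift_\ternary \F_q)$ and $\cX_M(\F_{p_1} \times \F_{p_2}) \cong \Hom(F_M, \Lift_\ternary(\F_{p_1} \times \F_{p_2}))$. Combining the second with the universal property of products, which gives $\cX_M(\F_{p_1} \times \F_{p_2}) = \cX_M(\F_{p_1}) \times \cX_M(\F_{p_2})$, the theorem reduces to exhibiting a pasture isomorphism $\Lift_\ternary \F_q \simeq \Lift_\ternary(\F_{p_1} \times \F_{p_2})$.

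By construction each ternary lift $\Lift_\ternary P$ decomposes as a tensor product $\bigotimes_{\Xi \in \Hex(P)} \Lift_\Xi$ whose factors lie in $\{\U, \D, \H, \F_3\}$ and are determined by the orbit-length type of $\Xi$ via \autoref{prop: isomorphism types of hexagon lifts}. Since tensor products of pastures are commutative and associative, it suffices to show that $\F_q$ and $\F_{p_1} \times \F_{p_2}$ carry the same multiset of hexagon types. The hexagon counts in $\F_q$ are read off from \autoref{cor: numbers of hexagons in Fq by type}, while those in the product are computed via the fiber-product decomposition of \autoref{prop: hexagons in products}, which assigns to each pair $(\Xi_1, \Xi_2) \in \Hex(\F_{p_1}) \times \Hex(\F_{p_2})$ a multiset of hexagon types depending only on $(\mu_{\Xi_1}, \mu_{\Xi_2})$.

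The main technical step, and the main obstacle, is a case analysis showing these counts agree. The key observation is that the hypothesis $3 \nmid q$ combined with $q - 2 = (p_1 - 2)(p_2 - 2)$ rules out precisely two configurations: ``both $p_i \equiv 0 \pmod 3$'' (which would make each $\F_{p_i}$ carry a ternary hexagon and force $3 \mid q$) and ``both $p_i \equiv 1 \pmod 3$'' (which would also force $3 \mid q$). With those cases excluded, running through the remaining combinations of $p_1, p_2 \pmod 3$ immediately yields $N_3 = 0 = n_3(q)$ and $N_h = n_h(q)$, where $N_*$ and $n_*(Q)$ denote the hexagon counts of each type in the product and in $\F_Q$, respectively. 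A short parity argument (noting that even prime powers are never divisible by $3$ and thus contribute $n_d = n_3 = 0$) gives $N_d = n_d(q)$. The near-regular count $N_u = n_u(q)$ then follows automatically from the identity
\[
 (p_1 - 2)(p_2 - 2) \ = \ q - 2 \ = \ 6\, n_u(q) + 3\, n_d(q) + 2\, n_h(q) + n_3(q),
\]
which combines \autoref{prop: hexagons in partial fields} applied to $\F_q$ with \autoref{lemma: fundamental elements in product pastures} applied to the product. Assembling these equalities produces the required isomorphism of ternary lifts and completes the proof.
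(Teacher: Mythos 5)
Your proposal is correct and follows essentially the same route as the paper: both reduce, via foundation representability and the ternary lift theorem, to showing $\Lift_\ternary \F_q \simeq \Lift_\ternary(\F_{p_1}\times\F_{p_2})$, and both then match hexagon-type multiplicities using \autoref{cor: numbers of hexagons in Fq by type}, \autoref{prop: hexagons in products}, and the fundamental-element count from \autoref{prop: hexagons in partial fields} / \autoref{lemma: fundamental elements in product pastures}. The only difference is cosmetic — you determine the dyadic/hexagonal/ternary counts by an explicit case analysis on $p_i \bmod 3$, while the paper argues that once each side has at most one dyadic, at most one hexagonal, and no ternary hexagon, the shared total $q-2 = (p_1-2)(p_2-2)$ forces the counts to agree.
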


\begin{proof}
 Let $M$ be a ternary matroid with foundation $F_M$. By \autoref{thm: characterizing property of the foundation} and the universal property of products, we have identifications
 \[
  \cX_M(\F_{p_1})\times \cX_M(\F_{p_2}) \ = \ \Hom(F_M,\F_{p_1})\times\Hom(F_M,\F_{p_2}) \ = \ \Hom(F_M,\F_{p_1}\times\F_{p_2})
 \]
 and by \autoref{thm: lift theorem for ternary matroids}, we have 
 \[
  \Hom(F_M,\F_{p_1}\times\F_{p_2}) \ = \ \Hom\Big(F_M,\Lift_{\ternary}(\F_{p_1}\times\F_{p_2})\Big).
 \]
 By the principle that we discussed in the beginning of \autoref{subsection: Incidences for rescaling classes}, an isomorphism $\cL\F_q\to\Lift_{\ternary}(\F_{p_1}\times\F_{p_2})$ induces a bijection $\cX_M(\F_q) \to \cX_M(\F_{p_1}) \times \cX_M(\F_{p_2})$. 
 
 By \autoref{prop: isomorphism types of hexagon lifts}, both ternary lifts $\cL\F_q$ and $\Lift_{\ternary}(\F_{p_1}\times\F_{p_2})$ are isomorphic to a tensor product of copies $\U$, $\D$, $\H$ and $\F_3$, one for each hexagon of the corresponding type in $\F_q$ and $\F_{p_1}\times\F_{p_2}$, respectively. Thus the two ternary lifts are isomorphic if and only if the numbers of hexagons of each type coincide for $\F_q$ and $\F_{p_1}\times\F_{p_2}$.
 
 \autoref{cor: numbers of hexagons in Fq by type} determines the number of hexagons in $\F_q$ and their types: there are $\lfloor\frac{q-2}6\rfloor$ hexagons of near-regular type; there is one hexagon of dyadic type if $q$ is odd and none if $q$ is even; there is one hexagon of hexagonal type if $q\equiv 1\pmod3$ and none otherwise; and there is no hexagon of ternary type since $q$ is not divisible by $3$ by our assumptions. By \autoref{prop: hexagons in partial fields}, we have 
 \[
  \sum_{\Xi\in\Hex(\F_q)}\#\Xi^\fund \ = \ q-2,
 \]
 and by \autoref{lemma: fundamental elements in product pastures} and \autoref{prop: hexagons in partial fields}, we have
 \[
  \sum_{\tilde\Xi\in\Hex(\F_{p_1}\times\F_{p_2})} \hspace{-0.7cm}\#\norm{\tilde\Xi} \ = \ \bigg(\sum_{\Xi_1\in\Hex(\F_{p_1})} \hspace{-0.4cm} \#\norm{\Xi_1} \bigg)\cdot\bigg(\sum_{\Xi_2\in\Hex(\F_{p_2})} \hspace{-0.4cm} \#\norm{\Xi_2} \bigg) \ = \ (p_1-2)\cdot (p_2-2).
 \]
 Since both numbers $q-2$ and $(p_1-2)\cdot (p_2-2)$ coincide by our assumptions and since the sum $3+2=5$ of the number of elements in a hexagon of dyadic type and a hexagon of hexagonal type is less than the number $6$ of elements in a hexagon of near-regular type, it suffices to show that $\F_{p_1}\times\F_{p_2}$ has at most one hexagon of dyadic type, at most one hexagon of hexagonal type and none of ternary type. Note that each of $\F_{p_1}$ and $\F_{p_2}$ has at most one hexagon of dyadic, hexagonal and ternary type by \autoref{cor: numbers of hexagons in Fq by type}.
 
 Examining the different constellations of products of hexagons in \autoref{prop: hexagons in products}, we see that $\F_{p_1}\times\F_{p_2}$ can only have a hexagon of ternary type if both $\F_{p_1}$ and $\F_{p_2}$ have a hexagon of ternary type. By \autoref{cor: numbers of hexagons in Fq by type} this means that $p_1\equiv p_2\equiv0\pmod3$ and thus $q=(p_1-2)(p_2-2)+2\equiv0\pmod3$, which we excluded by our assumptions. We conclude that $\F_{p_1}\times\F_{p_2}$ does not have a hexagon of ternary type.
 
 Similarly, we see that $\F_{p_1}\times\F_{p_2}$ can only have more than one hexagon of hexagonal type if both $\F_{p_1}$ and $\F_{p_2}$ have a hexagon of hexagonal type. By \autoref{cor: numbers of hexagons in Fq by type} this means that $p_1\equiv p_2\equiv1\pmod3$ and thus, again, $q=(p_1-2)(p_2-2)+2\equiv0\pmod3$, which we excluded by our assumptions. We conclude that $\F_{p_1}\times\F_{p_2}$ has at most one hexagon of hexagonal type.
 
 \autoref{prop: hexagons in products} also shows that $\F_{p_1}\times\F_{p_2}$ cannot have more than one hexagon of dyadic type. This verifies that $\Lift_{\ternary}\F_q$ and $\Lift_{\ternary}(\F_{p_1}\times\F_{p_2})$ are isomorphic.
\end{proof}

\begin{rem}\label{rem: Zagier}
 There are many instances of identifications of the form
 \[
  \cX_M(\F_q) \ = \ \cX_M(\F_{p_1}) \times \cX_M(\F_{p_2}).
 \]
 Trivially, we have for every prime power $q$ that 
 \[
  \cX_M(\F_2) \ = \ \cX_M(\F_{2}) \times \cX_M(\F_{q}) \qquad \text{and} \qquad \cX_M(\F_q) \ = \ \cX_M(\F_{3}) \times \cX_M(\F_{q}).
 \]
 But one also easily discovers many triples $(q,p_1,p_2)$ with $q,p_1,p_2\geq4$:
 \[
  \begin{array}{llllll}
   (  8, 4, 5),\, & ( 29, 5,11),\, & ( 47, 5,17),\, & ( 83, 5,29),\, & (125, 5,43),\, & (137,11,17), \\
   ( 11, 5, 5),\, & ( 32, 4,17),\, & ( 47, 7,11),\, & ( 83,11,11),\, & (128, 8,23),\, & (149, 9,23), \\
   ( 16, 4, 9),\, & ( 32, 5,11),\, & ( 51, 5,19),\, & ( 89, 5,31),\, & (128,11,16),\, & (163, 9,25), \\
   ( 17, 5, 7),\, & ( 32, 7, 8),\, & ( 71, 5,25),\, & (101,11,13),\, & (137, 5,47),\, & (167,13,17), \\
   ( 23, 5, 9),\, & ( 37, 7, 9),\, & ( 79, 9,13),\, & (121, 9,19),\, & (137, 7,29),\, & (173, 5,59).
  \end{array}
 \]
 According to a heuristic communicated to us by Don Zagier, the number of such triples $(q,p_1,p_2)$ up to some bound $N$ should grow roughly like $N/(\log N)^2$ for $N$ large, because there are roughly $N\log N$ solutions of the equation $q=(p_1-2)(p_2-2)+2$ in integers $p_1, p_2, q<N$, and the probability of all three being prime is roughly $1/(\log N)^3$. 
\end{rem}

\begin{rem}\label{rem: identifications of products of rescaling class spaces}
 We have formulated \autoref{thm: identifications of products of rescaling class spaces} in the most restrictive and at the same time most applicable way. In the following, we remark on generalizations and the necessity of our assumptions.
 \begin{enumerate}
  \item 
   Since $\cX_M(\F_3)$ is empty for non-ternary matroids $M$, it follows from \autoref{thm: identifications of products of rescaling class spaces} that there is an identification
   \[
    \cX_M(\F_q)\times\cX_M(\F_3) \ = \ \cX_M(\F_{p_1}) \times \cX_M(\F_{p_2}) \times\cX_M(\F_3)
   \]
   for every matroid $M$ if $q$, $p_1$ and $p_2$ satisfy the assumptions of the theorem.
  \item 
   The assumption that not $q$ is not divisible by $3$ is necessary as the following example shows. Since $27=(7-2)(7-2)+2$, the number of fundamental pairs in $\F_{27}$ and $\F_7\times\F_7$ are equal. However, there is no $D_3$-equivariant bijection between $\F_{27}^\pair$ and $(\F_7\times\F_7)^\pair$ since $\F_7\times\F_7$ has two hexagons of hexagonal type while $\F_{27}$ has no hexagon of hexagonal type; cf.\ \autoref{prop: hexagons in integral partial fields} and \autoref{prop: hexagons in products}.
   
   Note that an equality $q=(p_1-2)(p_2-2)+2$ where all of $q$, $p_1$ and $p_2$ are powers of $3$ leads also to an identification $\cX_M(\F_q)=\cX_M(\F_{p_1})\times\cX_M(\F_{p_2})$. We have excluded this case from the statement of \autoref{thm: identifications of products of rescaling class spaces} since $3^k=(3^i-2)(3^j-2)+2$ with $i,j,k\geq 1$ implies that $i=1$ or $j=1$, as a comparison modulo $9$ implies. In this case, we gain the trivial identification $\cX_M(\F_q)=\cX_M(\F_q)\times\cX_M(\F_3)$.

  \item
   By the same methods as we have proven \autoref{thm: identifications of products of rescaling class spaces}, we can prove the following more general statement. Let $p_1,\dotsc,p_n,q_1,\dotsc,q_m$ be prime powers such that 
   \[
    \prod_{i=1}^n (p_i-2) \ = \ \prod_{j=1}^m (q_j-2) 
   \]
   and such that either both products are $\equiv0\pmod3$ or the number of the factors $\equiv2\pmod3$ is the same for both products. Then there is an identification
   \[
    \prod_{i=1}^n\cX_M(\F_{p_i}) \ = \ \prod_{i=1}^m\cX_M(\F_{q_j})
   \]
   for every ternary matroid $M$. However, we did not find any such identification that we could not derive by combining identities of the type which appear in \autoref{thm: identifications of products of rescaling class spaces}.
 \end{enumerate}
\end{rem}


\begin{small}
 \bibliographystyle{plain}
 \bibliography{matroid}

\begin{thebibliography}{10}

\bibitem{Baker-Bowler19}
Matthew Baker and Nathan Bowler.
\newblock Matroids over partial hyperstructures.
\newblock {\em Adv. Math.}, 343:821--863, 2019.

\bibitem{Baker-Lorscheid18}
Matthew Baker and Oliver Lorscheid.
\newblock The moduli space of matroids.
\newblock Preprint, \arxiv{1809.03542}, 2018.

\bibitem{Baker-Lorscheid20}
Matthew Baker and Oliver Lorscheid.
\newblock Foundations of matroids. {P}art 1: Matroids without large uniform
  minors.
\newblock Preprint, \arxiv{2008.00014}, 2020.

\bibitem{Creech21}
Steven Creech.
\newblock Limits and colimits in the category of pastures.
\newblock Preprint, \arxiv{2103.08655}, 2021.

\bibitem{Gelfand-Rybnikov-Stone95}
Israel~M. Gelfand, Grigori~L. Rybnikov, and David~A. Stone.
\newblock Projective orientations of matroids.
\newblock {\em Adv. Math.}, 113(1):118--150, 1995.

\bibitem{Krasner56}
Marc Krasner.
\newblock Approximation des corps valu\'{e}s complets de caract\'{e}ristique
  {$p\not=0$} par ceux de caract\'{e}ristique {$0$}.
\newblock In {\em Colloque d'alg\`ebre sup\'{e}rieure, tenu \`a {B}ruxelles du
  19 au 22 d\'{e}cembre 1956}, Centre Belge de Recherches Math\'{e}matiques,
  pages 129--206. \'{E}tablissements Ceuterick, Louvain; Librairie
  Gauthier-Villars, Paris, 1957.

\bibitem{Lee-Scobee99}
Jon Lee and Matt Scobee.
\newblock A characterization of the orientations of ternary matroids.
\newblock {\em J. Combin. Theory Ser. B}, 77(2):263--291, 1999.

\bibitem{Pendavingh-vanZwam10a}
Rudi~A. Pendavingh and Stefan H.~M. van Zwam.
\newblock Confinement of matroid representations to subsets of partial fields.
\newblock {\em J. Combin. Theory Ser. B}, 100(6):510--545, 2010.

\bibitem{Pendavingh-vanZwam10b}
Rudi~A. Pendavingh and Stefan H.~M. van Zwam.
\newblock Lifts of matroid representations over partial fields.
\newblock {\em J. Combin. Theory Ser. B}, 100(1):36--67, 2010.

\bibitem{Semple-Whittle96}
Charles Semple and Geoff Whittle.
\newblock Partial fields and matroid representation.
\newblock {\em Adv. in Appl. Math.}, 17(2):184--208, 1996.

\bibitem{Tholen87}
Walter Tholen.
\newblock Reflective subcategories.
\newblock In {\em Proceedings of the 8th international conference on
  categorical topology ({L}'{A}quila, 1986)}, volume~27, pages 201--212, 1987.

\bibitem{Tutte58a}
William~T. Tutte.
\newblock A homotopy theorem for matroids, {I}.
\newblock {\em Trans. Amer. Math. Soc.}, 88:144--160, 1958.

\bibitem{Tutte58b}
William~T. Tutte.
\newblock A homotopy theorem for matroids, {II}.
\newblock {\em Trans. Amer. Math. Soc.}, 88:161--174, 1958.

\bibitem{Wenzel89}
Walter Wenzel.
\newblock A group-theoretic interpretation of {T}utte's homotopy theory.
\newblock {\em Adv. Math.}, 77(1):37--75, 1989.

\bibitem{Wenzel91}
Walter Wenzel.
\newblock Projective equivalence of matroids with coefficients.
\newblock {\em J. Combin. Theory Ser. A}, 57(1):15--45, 1991.

\bibitem{Whittle97}
Geoff Whittle.
\newblock On matroids representable over {${\rm GF}(3)$} and other fields.
\newblock {\em Trans. Amer. Math. Soc.}, 349(2):579--603, 1997.

\bibitem{Whittle05}
Geoff Whittle.
\newblock Recent work in matroid representation theory.
\newblock {\em Discrete Math.}, 302(1-3):285--296, 2005.

\end{thebibliography}
\end{small}

\end{document}